\numberwithin{equation}{section}
\newtheorem{theorem}{Theorem}[section]
\newtheorem{cor}[theorem]{Corollary}
\newtheorem{conj}[theorem]{Conjecture}
\newtheorem{remark}[theorem]{Remark}
\newtheorem{proposition}[theorem]{Proposition}
\newtheorem{lemma}[theorem]{Lemma}
\theoremstyle{definition}
\newcommand\g{\mathfrak g}
\newcommand{\Z}{\mathbb Z}
\newcommand\C{\mathbb C}
\newcommand{\bea}{\begin{eqnarray}}
\newcommand{\eea}{\end{eqnarray}}
\begin{document}

% \title[]{    A note on $L_{-\frac{5}{3}} (G_2)$ }  

  \title[]{ Vertex Algebras Related  to Regular Representations of $SL_2$  }

\author[Dra\v zen  Adamovi\'c]{Dra\v zen  Adamovi\' c}
 \address[Dra\v zen  Adamovi\' c]{Department of Mathematics \\ Faculty of Science  \\ University of Zagreb \\ 10 000 Zagreb, Croatia}
 \email[Dra\v zen  Adamovi\' c]{adamovic@math.hr}

\author[Antun Milas]{Antun Milas}
  \address[Antun Milas]{Department of Mathematics and Statistics  \\ SUNY-Albany \\  Albany NY 12222, USA}
 \email[Antun Milas]{amilas@albany.edu}

\keywords{vertex algebra, $W$--algebras, affine Lie algebra, regular representations}
\subjclass[2010]{Primary    17B69; Secondary 17B20, 17B67}

  \begin{abstract}  
 We construct a family of potentially quasi-lisse (non-rational) vertex algebras, denoted by $\mathcal{C}_p$, $p \geq 2$, which are closely related to the vertex algebra of chiral differential operators on $SL(2)$ at level $-2+\frac{1}{p}$. 
We prove that for $p = 3$, there is an isomorphism between $\mathcal{C}_3$ and the affine vertex algebra $L_{-5/3}(\mathfrak{g}_2)$ from Deligne's series. Moreover, we also establish isomorphisms between $\mathcal{C}_4$ and $\mathcal{C}_5$ and certain affine 
${W}$-algebras of types $F_4$ and $E_8$, respectively. In this way, we resolve the problem of decomposing certain conformal embeddings of affine vertex algebras into affine ${W}$-algebras. An important feature is that $\mathcal{C}_p$ is $\frac{1}{2} \mathbb{Z}_{\geq 0}$-graded with finite-dimensional graded subspaces and convergent characters. Therefore, for all $p \geq 2$, we show that the characters of $\mathcal{C}_p$ exhibit modularity, supporting the conjectural quasi-lisse property. 

 \end{abstract}
 \maketitle
%\tableofcontents
\section{Introduction}

%{\bf A: Mozda ovakav naslov i intro - slobodno promjeni ako ti se ne svidja. Sta kazes da koristimo $L_{k,\lambda}(\frak{g})$ za module i $L_k(\frak{g})$ za VOA?}

\subsection{Regular representations of vertex algebras}
The study of the regular representation of vertex algebras began with the work of Feigin and Parkhomenko \cite{FP} for $\frak{sl}_2$ and $\frak{sl}_3$, and more comprehensively by Frenkel and Styrkas \cite{FS} on their work on modified regular representation. An alternative approach involves the geometric construction of algebras of global chiral differential operators
$\mathcal{D}^{ch}_{X}$ on the cotangent bundle $T^*G$, which can be traced back to earlier works \cite{AG,GB,GB1}; see also \cite{Zhu} for connection between two approaches. A more recent perspective on chiral differential operators $\mathcal{D}^{ch}_{G,k}$ was undertaken by Arakawa, who explored them within the context of quasi-lisse vertex algebras and genus zero $S$-class theories in 4d $N=2$ SCFT  \cite{Arakawa}. 
%For $G = SL_2$ and a generic $k$, the chiral universal centralizer Iκ
%G was introduced
%earlier by Frenkel and Styrkas [FS] as the modified regular representation of the
%Virasoro algebra (see also [FZM]).

These works are rooted in classical representation theory, starting with the regular representation $\mathcal{O}(G)$, where $G$ is connected, simply-connected and semisimple, that decomposes by the Peter–Weyl theorem
\begin{equation} \label{regular}
\mathcal{O}(G)=\bigoplus_{\lambda \in P_+} V_\lambda \otimes V_{\lambda^*},
\end{equation}
where $V_\lambda$ is finite-dimensional irreducible $\frak{g}={\rm Lie}(G)$-module of highest weight $\lambda$ and $\lambda^*=-w_0(\lambda)$ \footnote{There is also version of this result for 
$\mathcal{O}(Bw_0B)$ where $Bw_0B$ is the big Bruhat cell \cite{FS}.}.
In the affine Lie algebra setting, the algebra of function is replaced with the vertex algebra of chiral differential operators that decomposes as $\hat{\frak{g}} \times \hat{\frak{g}}$-module 
\begin{equation} \label{CDO}
\mathcal{D}^{ch}_{G,k} \cong \bigoplus_{\lambda \in P^+} L_{-k-h^\vee,\lambda}(\frak{g}) \otimes L_{k-h^\vee,\lambda^*}(\frak{g}),
\end{equation}
where $k \notin \mathbb{Q}$ is a generic level and $h^\vee$ is the dual Coxeter number. 
%This Wakimoto type realization has an interpretation using chiral differential operators on $X=T^*G$. \cite{Arakawa}. 
%We should point out that representation of affine vertex algebras at generic level is easier to analyze which makes them easier to analyze.

\subsection{Minimal and Weil representations} 
The oscillator, or Weil representation, of $\mathfrak{sp}(2n)$, also known as the metaplectic representation, is a fundamental tool in harmonic analysis and representation theory. Weil-type representations also exist for exceptional Lie algebras. For example, S. Gelfand \cite{Gelfand} studied a representation of the exceptional Lie algebra $\mathfrak{g}_2$, realized as differential operators acting on a suitable space of Laurent polynomials in three variables; see also \cite{Savin}.
These representations are minimal because their annihilator $J$ in the universal enveloping algebra $\mathcal{U}(\mathfrak{g})$ coincides with the Joseph ideal $\mathcal{J}_0$ - the unique completely prime ideal satisfying $Ass(\mathcal{J}_0) = \overline{\mathcal{O}}_{\text{min}}$.

Minimal and Weil-type representations also appear in the context of vertex algebras. It is known \cite{AM0} that for every affine vertex operator algebras $L_k(\frak{g})$, with $k = -\frac{h^\vee}{6} - 1$,  in the Deligne series 
\[
A_1 \subset A_2 \subset G_2 \subset D_4 \subset E_6 \subset E_7 \subset E_8,
\]
the Zhu algebra $A(L_k(\frak{g}))$  contains $\mathcal{U}(\mathfrak{g})/\mathcal{J}_0$ as one of its components, indicating that the minimal representations of $\mathfrak{g}$ also appear as the top component of certain $L_k(\frak{g})$-modules. 
Interestingly, this property also arises for vertex algebras outside Deligne's series. For the symplectic Lie algebra $\mathfrak{sp}_{2n}$, this occurs for the vertex algebra
$L_{-1/2}(\mathfrak{sp}_{2n})$ \cite{AM}, which is also the $\mathbb{Z}$-graded part of the  $\beta\gamma$-system $M_{(n)}$ of rank $n$.
All above vertex algebras are {\em quasi-lisse}, meaning their associated variety $X_{L_k(\frak{g})}:={\rm Specm}(R_{L_k(\frak{g})})$ admits finitely many symplectic leaves (as a Poisson 
variety).

\subsection{Our paper and main results}

In this paper, we construct a family of vertex algebras that, on one hand, deform the vertex algebra of regular representation of ${\mathfrak{sl}}_2$, and, on the other, exhibit characteristics of the Deligne series of vertex algebras at admissible level discussed above. Consequently, these algebras have finite-dimensional $\frac12 \mathbb{Z}_{\geq 0}$-graded subspaces and are expected to be quasi-lisse. 
Although $\mathcal{C}_p$ is modeled on $\mathcal{D}^{ch}_{SL(2),-2+\frac{1}{p}}$, which pairs the levels $-2+\frac{1}{p}$ and $-2-\frac{1}{p}$ as in (\ref{CDO}), there is a crucial difference here: $\mathcal{C}_p$ pairs levels $-2-p$ and $-2+\frac{1}{p}$ that actually appear in the quantum version of the geometric Langlands correspondence \cite{Ga}.

To this end, we consider extensions of the vertex algebra $L_{k}( \mathfrak{sl}_2)  \otimes  L_{k'}( \mathfrak{sl}_2)$ such that  
$k+2 = \frac{1}{p}$, $k'+2 = -p$. One of our main results is the following theorem.

\begin{theorem} \label{main1}
For every $p \geq 1$, there is a simple vertex algebra $ \mathcal C_p $  such that  $L_{-2+\frac{1}{p}}( \mathfrak{sl}_2)  \otimes  L_{-2-p}( \mathfrak{sl}_2)$ is conformally embedded into  $ \mathcal C_p $, and we have the following decomposition:
$$ \mathcal C_p =  \bigoplus_{\ell =0} ^{\infty} L_{\widehat{\frak{sl}_2}} (- (2 + p  + p \ell  ) \Lambda_0 +  p  \ell  \Lambda_1) \bigotimes    
  L_{\widehat{\frak{sl}_2}} (- (2 - \frac{1}{ p}  +   \ell  ) \Lambda_0 +   \ell  \Lambda_1). 
 $$
 \end{theorem}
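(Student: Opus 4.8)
The plan is to realize $\mathcal C_p$ explicitly as an extension of $L_{-2+\frac1p}(\mathfrak{sl}_2)\otimes L_{-2-p}(\mathfrak{sl}_2)$ and then read off the decomposition from that construction. First I would build a joint free-field model: starting from the $\beta\gamma$/Wakimoto data underlying $\mathcal D^{ch}_{SL(2)}$, I would produce two mutually commuting copies of $\widehat{\mathfrak{sl}_2}$, one at level $k=-2+\frac1p$ and one at level $k'=-2-p$, acting on a common Fock-type space $\mathcal F$, giving a homomorphism $L_{k}(\mathfrak{sl}_2)\otimes L_{k'}(\mathfrak{sl}_2)\to\mathcal F$. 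The index $\ell$ in the theorem should emerge as the charge under the auxiliary Heisenberg (lattice) direction left over once the two Sugawara actions are split off; the proposed extension fields are then the lowest-charge ($\ell=1$) vertex operators, and $\mathcal C_p$ is the vertex subalgebra they generate together with $L_k\otimes L_{k'}$.

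The conceptual heart of the argument is a conformal-weight computation. The top of the $\ell$-th summand $M_\ell\otimes N_\ell$ sits at total weight
\[
h_\ell+h_\ell' = \frac{p\,\ell(\ell+2)}{4}-\frac{\ell(p\ell+2)}{4}=\frac{(p-1)\,\ell}{2},
\]
which is \emph{linear} in $\ell$ and lies in $\tfrac12\mathbb Z_{\ge0}$. Linearity is exactly what makes the extension possible: the monodromy between summands $\ell_1,\ell_2$ is governed by $(h+h')_{\ell_1+\ell_2}-(h+h')_{\ell_1}-(h+h')_{\ell_2}=0$, so the extension fields are mutually local (integer or half-integer statistics) and local with respect to $L_k\otimes L_{k'}$. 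Equivalently, the two module categories sit at the ``Langlands-dual'' parameters $(k+2)(k'+2)=-1$, and their braiding/associativity data are inverse to one another, so $\bigoplus_\ell M_\ell\otimes N_\ell$ carries the structure of a commutative algebra object. From this I would conclude that $\mathcal C_p$ is a well-defined $\tfrac12\mathbb Z_{\ge0}$-graded vertex algebra.

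It then remains to pin down the decomposition and simplicity. Filtering $\mathcal F$ (equivalently, the extension) by the auxiliary charge, I would exhibit in the charge-$\ell$ piece a joint highest-weight vector of bi-weight $\bigl(-(2+p+p\ell)\Lambda_0+p\ell\Lambda_1,\,-(2-\tfrac1p+\ell)\Lambda_0+\ell\Lambda_1\bigr)$ and show that the $\widehat{\mathfrak{sl}_2}\times\widehat{\mathfrak{sl}_2}$-module it generates is the \emph{full} irreducible tensor product appearing in the sum; a character/Euler--Poincar\'e count then confirms there are no further summands and that each appears with multiplicity one. Finally I would verify that the conformal vector of $\mathcal C_p$ coincides with the sum of the two Sugawara vectors (the conformal embedding) and deduce simplicity from the fact that $\mathcal C_p$ is generated by local fields acting on a simple vacuum line.

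The main obstacle is the gluing and identification in the last two steps: proving that the intertwiners close under fusion as $M_{\ell_1}\boxtimes M_{\ell_2}=M_{\ell_1+\ell_2}$ (and likewise for the $N_\ell$) with vanishing cocycle/monodromy obstruction, so that associativity of $Y$ holds, and—in tandem—establishing that the first-factor modules $L(-(2+p+p\ell)\Lambda_0+p\ell\Lambda_1)$ at the non-admissible-looking level $-2-p$ really are the correct simple highest-weight modules with the expected characters. The degenerate case $p=1$, where all weights collapse to $0$, would be treated separately.
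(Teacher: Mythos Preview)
Your plan is reasonable at the level of conformal weights and locality, and the linear growth $h_\ell+h'_\ell=\tfrac{(p-1)\ell}{2}$ is indeed the key numerical fact. But the obstacles you flag at the end are exactly where this direct approach stalls, and they are not merely technical. To run the algebra-object argument you would need a braided tensor structure on suitable $KL$ categories at \emph{both} levels; at $k'=-2-p$ (neither admissible nor generic) no such structure is established, and the claim that ``the braiding/associativity data are inverse to one another'' because $(k+2)(k'+2)=-1$ is precisely what would have to be proved. Similarly, identifying $L_{\widehat{\mathfrak{sl}_2}}\bigl(-(2+p+p\ell)\Lambda_0+p\ell\Lambda_1\bigr)$ as simple with the expected character at level $-2-p$ is not available off the shelf.

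The paper sidesteps both issues by a detour through quantum Hamiltonian reduction. One begins with $\mathcal U_p=\mathcal D^{ch}_{SL_2,\,-2+1/p}$, which is already known to be simple with the Peter--Weyl decomposition at the paired levels $-2\pm\tfrac1p$. Applying QHR to the first $\mathfrak{sl}_2$ factor yields a simple vertex algebra
\[
\mathcal V_p=\bigoplus_{\ell\ge0} L^{Vir}\bigl(c_{1,-p},h^{1,-p}_{1,\ell+1}\bigr)\otimes L_{\widehat{\mathfrak{sl}_2}}\bigl(-(2-\tfrac1p+\ell)\Lambda_0+\ell\Lambda_1\bigr).
\]
The pivotal observation is the Virasoro coincidence $L^{Vir}(c_{1,-p},h^{1,-p}_{1,\ell+1})=L^{Vir}(c_{-p,1},h^{-p,1}_{1,p\ell+1})$, which allows one to apply \emph{inverse} QHR at level $-2-p$: one defines $\mathcal C_p$ as the $\mathfrak{sl}_2\times\mathfrak{sl}_2$-integrable part of $\mathcal V_p\otimes\Pi(0)^{1/2}$, and the claimed decomposition then follows from the known screening-kernel realization of the irreducible $\widehat{\mathfrak{sl}_2}$-modules inside $L^{Vir}\otimes\Pi(0)^{1/2}$. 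Simplicity is obtained not via abstract fusion at level $-2-p$ but by transporting the fusion product $V_p(1)\cdot V_p(\ell)=V_p(\ell-1)\oplus V_p(\ell+1)$ from the simple algebra $\mathcal V_p$ (where it follows from simplicity together with Virasoro fusion rules) up to $\mathcal C_p$. In short, the paper never touches fusion or module structure at the bad level directly: everything is pulled back from the Virasoro side through inverse QHR, and the simplicity input comes from $\mathcal D^{ch}_{SL_2}$.
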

% To prove this Theorem we use the following vertex algebras constructed in the literature on chiral differential operators \cite{Ara18} (see \cite{AM}):
 To prove this result, we use the following vertex algebras constructed in the literature on chiral differential operators \cite{Ara18} (see \cite{AM}):
 $$ \mathcal U_{p} :=  \bigoplus_{\ell =0} ^{\infty} L_{\widehat{\frak{sl}_2}} (- ( 2 + \frac{1}{p} +  \ell  ) \Lambda_0 +    \ell  \Lambda_1) \bigotimes    
  L_{\widehat{\frak{sl}_2}} (- \left( 2- \frac{1}{p}  +  \ell  \right) \Lambda_0 +  \ell  \Lambda_1), 
$$
and 
  $$  \mathcal V_{p}  := H_{DS, f} (\mathcal U_{p}) =  \bigoplus_{\ell =0} ^{\infty}  L^{Vir} (c_{1,-p}, h^{1,-p}_{1, \ell +1}) \bigotimes    
  L_{\widehat{\frak{sl}_2}} (- \left( 2- \frac{1}{p}  +  \ell  \right) \Lambda_0 +  \ell  \Lambda_1).
$$
We notice that
 $$  L^{Vir} (c_{1,-p}, h^{1,-p}_{1, \ell +1}) = L^{Vir}(c_{-p,1}, h^{-p, 1} _{1, np+1}),  $$
 which indicates  that we can apply the inverse Quantum Hamiltonian Reduction (QHR) from  a suitable  category of  $ L^{Vir}_{c_{1,-p}}$--modules to the category $KL_{-p-2} (\mathfrak{sl}_2)$ (cf. Section  \ref{iqhr}).  As a consequence, we get that inverse QHR acts from the category of  $ L^{Vir}_{c_{1,-p}} \otimes L_{-2 + \frac{1}{p}} (\mathfrak{sl}_2)$--modules to the category  of $ L_{-2 - p} (\mathfrak{sl}_2) \otimes L_{-2 + \frac{1}{p}} (\mathfrak{sl}_2)$--modules. Moreover, we prove that  
that $\mathcal C_p$ is exactly  the inverse QHR  of $\mathcal V_{p}$.  

\begin{theorem} \label{general-11} There is an action of the  Lie algebra $\g_0= \mathfrak{sl}_2 \times \mathfrak{sl}_2$ on   $\mathcal V_{p} \otimes \Pi(0) ^{\frac12}$  so that 
$$ \mathcal C_p = \left( \mathcal V_{p} \otimes \Pi(0) ^{\frac12} \right) ^{\mathrm{int}_{\g_0}} \cong  \bigoplus_{\ell =0} ^{\infty} L_{\widehat{\frak{sl}_2}} (- (2 + p  + p \ell  ) \Lambda_0 +  p  \ell  \Lambda_1) \bigotimes    
  L_{\widehat{\frak{sl}_2}} (- (2 - \frac{1}{ p}  +   \ell  ) \Lambda_0 +   \ell  \Lambda_1).
 $$
 %where $S$ is screening operator defined by  (\ref{scr-1}) for  $k=-p-2$.
\end{theorem}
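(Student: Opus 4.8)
The plan is to exhibit $\mathcal C_p$ as the image of Drinfeld--Sokolov inverse reduction applied to $\mathcal V_p$ one summand at a time, and to realize that inverse reduction concretely in the ``free field plus integrability'' form appearing in the statement. First I would set up the inverse QHR functor on a single Virasoro summand. Using the relabeling $L^{Vir}(c_{1,-p},h^{1,-p}_{1,\ell+1}) = L^{Vir}(c_{-p,1},h^{-p,1}_{1,\ell p+1})$ recorded in the excerpt, each Virasoro factor of $\mathcal V_p$ is of the form $H_{DS,f}\bigl(L_{\widehat{\mathfrak{sl}_2}}(-(2+p+p\ell)\Lambda_0+p\ell\Lambda_1)\bigr)$, i.e.\ the reduction of an $\widehat{\mathfrak{sl}_2}$-module at level $-p-2$ whose finite part $p\ell$ is dominant integral. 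I then invoke the explicit inverse reduction of Section~\ref{iqhr}: the currents $e(z),h(z),f(z)$ of level $-2-p$ are reconstructed as fields on $L^{Vir}_{c_{1,-p}}\otimes\Pi(0)^{\frac12}$, with $\Pi(0)^{\frac12}$ supplying the extra bosonic degrees of freedom, and the simple affine module is recovered as the maximal locally finite (integrable) submodule for the finite $\mathfrak{sl}_2$ generated by $e_0,h_0,f_0$.

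Next I apply this factorwise to $\mathcal V_p$. The inverse reduction touches only the Virasoro tensor factor; the surviving affine factor $L_{\widehat{\mathfrak{sl}_2}}(-(2-\frac{1}{p}+\ell)\Lambda_0+\ell\Lambda_1)$ of level $-2+\frac{1}{p}$ is left untouched and contributes the \emph{second} copy of $\mathfrak{sl}_2$ through its own zero modes. Thus I define $\g_0=\mathfrak{sl}_2\times\mathfrak{sl}_2$ to act on $\mathcal V_p\otimes\Pi(0)^{\frac12}$ by: the first factor via $e_0,h_0,f_0$ of the reconstructed level-$(-2-p)$ currents living in $L^{Vir}_{c_{1,-p}}\otimes\Pi(0)^{\frac12}$, and the second via the zero modes of the level-$(-2+\frac{1}{p})$ currents. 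Because the two families of generating fields occupy disjoint tensor factors, their mutual locality and the required commutation are essentially automatic, so $\mathcal V_p\otimes\Pi(0)^{\frac12}$ becomes a module for $\widehat{\mathfrak{sl}_2}_{-2-p}\times\widehat{\mathfrak{sl}_2}_{-2+1/p}$ and $\g_0$ acts by (finite) symmetries compatible with the vertex algebra structure. Note that integrability for the second copy is automatic, since each $L_{-2+1/p}$-summand is already $\mathfrak{sl}_2$-locally finite ($\ell\in\mathbb Z_{\geq 0}$); the constraint $\mathrm{int}_{\g_0}$ effectively bites only on the first copy, where $\Pi(0)^{\frac12}$ introduces non-locally-finite directions to be discarded.

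Taking the $\g_0$-integrable part then yields, summand by summand,
$$ \bigl(L^{Vir}(c_{1,-p},h^{1,-p}_{1,\ell+1})\otimes\Pi(0)^{\frac12}\bigr)^{\mathrm{int}}\otimes L_{\widehat{\mathfrak{sl}_2}}\bigl(-(2-\tfrac{1}{p}+\ell)\Lambda_0+\ell\Lambda_1\bigr) $$
$$ \cong L_{\widehat{\mathfrak{sl}_2}}\bigl(-(2+p+p\ell)\Lambda_0+p\ell\Lambda_1\bigr)\otimes L_{\widehat{\mathfrak{sl}_2}}\bigl(-(2-\tfrac{1}{p}+\ell)\Lambda_0+\ell\Lambda_1\bigr), $$
and summing over $\ell$ reproduces the decomposition asserted for $\mathcal C_p$. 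The hard part, where I expect the real work to lie, is twofold. First, I must prove that the integrable part is \emph{exactly} this sum: no smaller (each listed summand really is $\g_0$-locally finite, which reduces to $p\ell,\ell$ being dominant integral) and no larger (no further locally-finite vectors survive out of $\Pi(0)^{\frac12}$), which I would pin down by a character/weight-multiplicity comparison between $L^{Vir}(c_{1,-p},h^{1,-p}_{1,\ell+1})\otimes\Pi(0)^{\frac12}$ and the target simple affine module. Second, I must upgrade this summand-wise $\g_0$-module isomorphism to an isomorphism of \emph{vertex algebras}, verifying that the vertex operations induced on the invariant subalgebra between distinct $\ell$-summands agree with those of $\mathcal C_p$; here I would match strong generators (the reconstructed currents together with the top components of the $\ell=1$ summand) and the conformal vector, and invoke the conformal embedding and simplicity of $\mathcal C_p$ from Theorem~\ref{main1} to conclude that the generated invariant subalgebra is precisely $\mathcal C_p$.
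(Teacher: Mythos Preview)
Your core approach is right and matches the paper's: the action of $\g_0$ comes from the homomorphism $V^{-p-2}(\mathfrak{sl}_2)\otimes V^{-2+1/p}(\mathfrak{sl}_2)\to \mathcal V_p\otimes\Pi(0)^{1/2}$ (the first factor via the inverse-QHR currents of Section~\ref{iqhr}, the second via the existing affine factor), and the decomposition of the integrable part is obtained summand by summand from Corollary~\ref{inverse-negative}. That is exactly what the paper does; it simply says ``the decomposition follows directly by applying Corollary~\ref{inverse-negative}.''

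Two points where you are making your life harder than necessary, and one where you risk circularity:

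\emph{Your ``hard part 1'' is not hard.} Proposition~\ref{opis-1}/Corollary~\ref{inverse-negative} already assert an \emph{isomorphism}
\[
\bigl(L^{Vir}(c_{-p,1},h^{-p,1}_{1,np+1})\otimes\Pi(0)^{1/2}\bigr)^{\mathrm{int}_{\mathfrak{sl}_2}}\;\cong\; L_{\widehat{\mathfrak{sl}_2}}\bigl((k-np)\Lambda_0+np\,\Lambda_1\bigr),
\]
not merely an inclusion. So there is nothing extra to check by characters: ``no smaller'' and ``no larger'' are both contained in that statement.

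\emph{Your ``hard part 2'' is based on a misreading of the logical order.} In the paper, $\mathcal C_p$ is \emph{defined} to be $\bigl(\mathcal V_p\otimes\Pi(0)^{1/2}\bigr)^{\mathrm{int}_{\g_0}}$ (this is Theorem~\ref{general-1}); the integrable part of a vertex algebra under derivations by a semisimple Lie algebra is automatically a vertex subalgebra (see the Notation section). There is no independently constructed $\mathcal C_p$ to which one must exhibit a vertex-algebra isomorphism. Theorem~\ref{main1} (existence plus simplicity) is then \emph{deduced} from this construction via a fusion-product argument on the summands $W_p(\ell)$, showing $W_p(1)\cdot W_p(\ell)=W_p(\ell-1)\oplus W_p(\ell+1)$. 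Invoking the simplicity of $\mathcal C_p$ from Theorem~\ref{main1} to finish your argument would therefore be circular.

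In short: drop your second ``hard part'' entirely, and for the first, just cite Corollary~\ref{inverse-negative}; what remains is precisely the paper's proof of the decomposition statement.
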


For small values of $p$, we have more  explicit realizations  of $ \mathcal C_p$, which does not use chiral differential operators and results from \cite{Arakawa}.
\begin{itemize}
\item For $p=2$, it was proven in \cite[Section 9]{AKMPP-selecta} that $ \mathcal C_p \cong  M_{(3)} $. This results was obtained in the context of conformal embeddings $\mathfrak{sl}_2 \times \mathfrak{sl}_2 \hookrightarrow \mathfrak{sp}_6$ at $k=-\frac{1}{2}$. Note that  $M_{(3)} ^{\Z_2} \cong L_{-1/2} (\mathfrak{sp}_6)$.
\item For $p=3$,  we prove  that $\mathcal C_3 \cong  L_{-\frac{5}{3}} (G_2)$. 
In order to see this, we first  prove that the rank two  Weyl vertex algebra $M_{(2)}$ is completely reducible as a module for $L_{-5} (\mathfrak{sl}_2) \otimes V^{Vir} _{c_{1,3}}$ (cf. Theorem \ref{m2-dec}). Then we show that
the vertex algebra   $L_{-\frac{5}{3}} (G_2)$ is obtained by the inverse QHR of $M_{(2)}$:
$$ L_{-\frac{5}{3}} (G_2) = \mbox{ Ker} _ { M_{(2)} \otimes \Pi(0) ^{1/2} }S,  $$
which implies that  $\mathcal C_3 \cong  L_{-\frac{5}{3}} (G_2)$ (cf. Theorem  \ref{dec-G2}). 
%Therefore $L_{-\frac53}(G_2)$ is precisely an affine analog  of the Weil representation.

\item[] As a byproduct, we obtain the decomposition  of conformal embedding $\mathfrak{sl}_2 \times \mathfrak{sl}_2 \hookrightarrow G_2$ at $k=-\frac{5}{3}$ initiated in \cite{AKMPP-16}.

%\item[] Notice that this also proves that  $L_{-\frac{5}{3}} (G_2)$ is precisely an affine analog  (or chiralization)  of the Weil representation of $\frak{g}_2$.

\item[]Since $L_{-\frac{5}{3}} (G_2) $ is an admissible affine vertex algebra, it is quasi-lisse \cite{AK}. So $\mathcal C_3$ is quasi-lisse.

\item  For $p=4$, in Theorem   \ref{c4-inverse} we prove that $\mathcal C_4$ can be realized as the inverse quantum hamiltonian reduction of $L_{-3/2} (\mathfrak{sl}_3)$. The key point for this result is the decomposition of $L_{-3/2} (\mathfrak{sl}_3)$ as an $L_{-6} (\mathfrak{sl}_2) \otimes V^{Vir} _{c_{1,4}}$--module (cf. Theorem   \ref{3/2-dec}). 

\item Next we  identify $\mathcal C_4$ with the affine vertex algebra $\mathcal{W}_{-\frac{23}{4}}(F_4,A_1+\tilde{A}_1)$. Since  it is obtained by the QHR from the admissible affine vertex algebra $L_{-\frac{23}{4}}(F_4)$, we conclude that  $\mathcal C_4$ is quasi-lisse  (cf. Proposition \ref{quasi-lisse}). 

\item For $p=5$, in Theorem \ref{c5}  we established an isomorphism $$\mathcal C_5 \cong \mathcal{W}_{-30+\frac{31}{5}}(E_8,A_4 + A_2).$$
Since $k=-30+\frac{31}{5}$ is admissible level for $E_8$ we also get that $ \mathcal C_5$ is quasi lisse; see Proposition \ref{quasi-lisse-e8}.

\item It is interesting to notice that for $p=2,3,4,5$, the relevant admissible levels of affine vertex algebra needed to describe $\mathcal{C}_p$ are given by $-h^\vee + \frac{h+1}{p}$. For $p \geq 6$, we do not expect   $\mathcal{C}_p$ to be isomorphic to another affine $W$-algebra.

\end{itemize}

By a result of Arakawa and Kawasetsu \cite{AK}, it is known that the character of a quasi-lisse vertex operator algebra satisfies a particular modular linear differential equation (MLDE).
Motivated by their work, we construct explicit MLDEs of minimal order satisfied by ${\rm ch}[\mathcal C_p]$ for $p \in \{ 2,3,4,5\}$ (see Section 8). This also follows from general properties of the associated varieties. 

In the last section we obtained a closed formula for ${\rm ch}[\mathcal{C}_p]$ and discuss modular transformation properties; see Theorem \ref{explicit-char}. 
%Based on that, we present further evidence that $\mathcal{C}_p$ has a unique ordinary module.

\subsection{Notation} Throughout we will be using the following notation:

\begin{itemize}

\item $\frak{g}$ will denote a finite-dimensional simple Lie algebra of rank $\ell$, $\frak{h} \subset \frak{g}$ its Cartan subalgebra and we choose $\{\alpha_1,...,\alpha_\ell\}$ a set of simple roots and $\omega_i$, $1 \leq i \leq \ell$ be the corresponding fundamental weights. Subject 
to this, we denote by $Q$ the root lattice, $P$ the weight lattice, and $P_+$ the cone of dominant integral weights. 

\item Given $\lambda \in P_+$, we denote by $V_\frak{g}(\lambda)$ the irreducible finite-dimensional $\frak{g}$-module of highest weight $\lambda$.

\item $Vir={\rm Span}\{ L(n), n \in \mathbb{Z}, c\}$ will denote the Virasoro Lie algebra with the usual bracket relations and central element $c$.

\item $V^{Vir}_c$ denotes the universal Virasoro vertex algebra of central charge $c$.

\item $L^{Vir}_c$ denotes the simple Virasoro vertex algebra of central charge $c$.

\item  Let $L^{Vir}(c,h)$ will denote the irreducible highest weight $V^{Vir}_c$--module with highest weight $h$ (and central charge $c$).

\item We write   $\widetilde L^{Vir}(c,h)$  for the highest weight $V^{Vir}_c$--module with highest weight $h$, which is not necessary irreducible. In other words, a certain quotient of 
the Verma module $M^{Vir}(c,h)$.

\item $\widehat{\frak{g}}=\frak{g} \otimes \mathbb{C}[t,t^{-1}] \oplus \mathbb{C}C$ will denote the affine Lie algebra associated to $\frak{g}$ with central element $C$. 

\item $V^{k}(\frak{g})$ will denote the universal affine vertex algebra associated to $\frak{g}$ of level $k$.

\item $L_{k}(\frak{g})$ will denote the simple affine vertex algebra associated to $\frak{g}$ of level $k$.

\item $L_{\hat{\frak{g}}}(\Lambda)$ will denote the irreducible $\hat{\frak{g}}$-module of highest weight module $\Lambda$. We denote the fundamental weights by $\Lambda_i$, $0 \leq  i \leq \ell$.
We also use sometimes use $L_{\frak{g},k}(\lambda)$, $\lambda \in \frak{h}^*$ to denote the $L_{\hat{\frak{g}}}(\Lambda)$ of level $k$ whose top component is $\frak{g}$-module $L(\lambda)$.

\item We write  $\widetilde L_{\hat{\frak{g}}}(\Lambda)$  for the highest weight $\hat{\frak{g}}$-module of highest weight module $\Lambda$, which is not-necessary irreducible. 

\item $H_{DS}(\cdot)$ will denote the Drinfeld-Sokolov reduction functor \cite{Arakawa}. 

\item $\mathcal{W}^{k}(\frak{g},f)$ will denote the universal affine $W$-algebra of level $k$ and nilpotent element $f$. We also denote by $\mathcal{W}_{k}(\frak{g},f)$ its simple quotient.

\item $M_{(n)}$ denotes the $\frac12$-shifted  $\beta \gamma$-system of rank $n$, also known as the Weyl vertex algebra.

\item Given a conformal vertex algebra $V$ of central charge $c$, and a $V$-module $M$ with finite-dimensional graded subspaces, then
$${\rm ch}[M]={\rm tr}_{M} q^{L(0)-\frac{c}{24}},$$
will denote the character of $M$. 
\item If a semi-simple Lie algebra $\mathfrak{g}$ acts on a vertex algebra $V$ by derivations, let $V^{\mathrm{int}_{\mathfrak{g}}}$ denote the maximal $\mathfrak{g}$-integrable part of $V$, i.e., the direct sum of all finite-dimensional $\mathfrak{g}$-submodules of $V$. The subspace $V^{\mathrm{int}_{\mathfrak{g}}}$ is a vertex subalgebra of $V$.

\item  Vertex algebra $V$ is called quasi-lisse if the  associative variety $X_{L_k(\frak{g})}={\rm Specm}(R_{L_k(\frak{g})})$ admits finitely many symplectic leaves (cf. \cite{AK}).

\end{itemize}

 \section{Inverse Quantum Hamiltonian Reduction  }
 \label{iqhr}

We shall follow realization of $L_k (\mathfrak{sl}_2)$ from   \cite{A-2019}.
Let $k+ 2 = \frac{p'}{p}$. 
For any two co-prime  integers $p, p'$ we set
$$
c_{p', p} := 1 - 6\frac{(p-p')^2}{p p'}.
$$
  For $r,s \in {\Z}$, we set
  $$  h_{r,s} ^{p',p} = \frac{ (sp-rp')^2- (p-p')^2}{4 p p'}. $$
  
  Recall the following well-known fusion rules
    in the category of $L^{Vir}_{c_{1,p}}$--modules for $i \ge 2$ (see \cite{Lin}):
 \begin{equation}  \label{fusion-1-p} 
  {  L^{Vir}(c_{1,p}, h^{1,p} _{1, 2})  \times   L^{Vir}(c_{1,p}, h^{1,p} _{1, i}) = L^{Vir}(c_{1,p}, h^{1,p} _{1, i-1}) + L^{Vir}(c_{1,p}, h^{1,p} _{1, i+1}) }. 
  \end{equation}
Let $\omega$ be  the conformal vector in $V^{Vir} _{ c_{p',p}} $.
Define
\begin{equation}
 \Pi(0) := M_{c,d} (1) \otimes {\C}[{\Z} c]\qquad \text{and} \qquad  \Pi(0)^{\frac{1}{2}} = M_{c,d} (1) \otimes {\C}[{\Z} \frac{c}{2}],
\end{equation}
where $M_{c,d} (1)$ is the Heisenberg vertex algebra generated by  fields $c(z)$ and $d(z)$ with
$$\langle c, c \rangle=\langle d, d \rangle=0, \ \ \langle c, d \rangle=2.$$

We also have  the following $\Z_2$--gradation on $\Pi(0)^{\frac{1}{2}} $
$$ \Pi(0)^{\frac{1}{2}} =   \Pi_0 (0)^{\frac{1}{2} }   + \Pi_1(0)^{\frac{1}{2}},   $$
where $\Pi(0)^{\frac{1}{2}} $ and $\Pi_1(0)^{\frac{1}{2}} = \Pi(0) . e^{\frac{c}{2}}$. 

 Now, result from \cite{A-2019} gives homomorphism of vertex algebras
$$\Phi : V^{k} (\mathfrak{sl}_2 ) \rightarrow V^{Vir} _{ c_{p',p}}   \otimes \Pi(0)  $$
such that
 \begin{equation}
 \begin{split}
e & \mapsto  {\bf 1} \otimes e^{c  }, \label{def-e-3} \\
h & \mapsto  2 \cdot {\bf 1} \otimes  \mu(-1){\bf 1}, \\
f & \mapsto       (k+2) \omega \otimes e^{-c}    -{\bf 1} \otimes \left[\nu(-1)^{2} -  (k+1) \nu(-2) \right] e^{-c },
\end{split}
 \end{equation}
where \bea \mu = \tfrac{1}{2} d  +  \tfrac{k}{4} c , \ \nu = \tfrac{1}{2} d - \tfrac{k}{4} c.  \label{def-mu-nu} \eea
As in \cite{A-2019} we have the screening operator 
 
\bea \label{scr-1} S =  \mbox{Res}_ z Y ( v^{(k)} \otimes e^{\nu} , z), \eea
where $v^{(k)}$  is a singular vector for the Virasoro algebra highest weight module  of  conformal weight $h^{p',p} _{2,1}=\frac{3}{4} k + 1$.

The following result follows from  \cite{A-2019} and \cite{ ACGY}:

 \begin{proposition} \label{opis-1}   Assume that  $k \notin {\Z}_{\ge 0}$, $k \ne -2$  and $n \in {\Z}_{\ge 0}$. Then we have
\bea   L_{\widehat{\frak{sl}_2}} ((k-n ) \Lambda_0 +   n  \Lambda_1)  &\cong&  V^k(\mathfrak{sl}_2).  (v_{1,n+1} \otimes e^{ \frac{n}{2} c}) \nonumber \\
& \cong & \mbox{\rm Ker} _{    L^{Vir} (c_{p',p}, h^{p',p} _{1, n+1} )   \otimes \Pi(0)^{\frac12}} S \nonumber \\   
& \cong &  \left( L^{Vir} (c_{p', p}, h^{1,p}_{1, np +1} )   \otimes \Pi(0)^{\frac12}\right)   ^{\mathrm{int}_{\mathfrak{sl}_2}} .  \eea
In particular,
$L_k(\mathfrak{sl}_2) =   \left( L^{Vir}_{c_{p', p}}   \otimes \Pi(0)^{\frac12}\right)   ^{\mathrm{int}_{\mathfrak{sl}_2}} $.
 \end{proposition}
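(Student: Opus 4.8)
The plan is to realize all three spaces as $\widehat{\mathfrak{sl}_2}$-modules through the free-field homomorphism $\Phi$ of \eqref{def-e-3}, and to identify them successively, upgrading the vacuum computation of \cite{A-2019} to the module level as in \cite{ACGY}. Set $M_n := L^{Vir}(c_{p',p}, h^{p',p}_{1,n+1}) \otimes \Pi(0)^{\frac12}$. Since $\Phi$ takes values in $V^{Vir}_{c_{p',p}} \otimes \Pi(0)$ and $M_n$ is a module over that vertex algebra, $\Phi$ makes $M_n$ a $V^k(\mathfrak{sl}_2)$-module on which the zero modes $e_0,h_0,f_0$ generate the horizontal $\mathfrak{sl}_2$. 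First I would check that $w := v_{1,n+1}\otimes e^{\frac{n}{2}c}$ is a highest weight vector: using $\langle c,c\rangle=0$, $\langle c,d\rangle=2$ and \eqref{def-mu-nu}, the $\mu$-charge of $e^{\frac{n}{2}c}$ is $\frac{n}{2}$, so $h_0 w = n w$; since $e\mapsto \mathbf 1\otimes e^c$ and $v_{1,n+1}$ is Virasoro primary, $e_m w = h_m w = f_m w = 0$ for $m>0$, $C$ acts by $k$, and the Sugawara weight is $\frac{n(n+2)}{4(k+2)}$. Hence $V^k(\mathfrak{sl}_2).w$ is a highest weight module of weight $(k-n)\Lambda_0 + n\Lambda_1$, and the first isomorphism reduces to its irreducibility.

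The decisive input is the screening \eqref{scr-1}. Because the screening current $Y(v^{(k)}\otimes e^{\nu},z)$ commutes with the image of $\Phi$ up to a total derivative, its residue $S$ commutes with the entire $\widehat{\mathfrak{sl}_2}$-action, so $\mathrm{Ker}_{M_n}S$ is an $\widehat{\mathfrak{sl}_2}$-submodule. A computation of the leading $z$-powers, with $\langle \nu,\frac{n}{2}c\rangle = \frac{n}{2}$, shows $Sw=0$, so $V^k(\mathfrak{sl}_2).w \subseteq \mathrm{Ker}_{M_n}S$. To get equality and irreducibility simultaneously I would argue on characters: placing $S$ in a Felder-type two-sided complex and computing its cohomology with the help of the fusion rules \eqref{fusion-1-p}, one shows $\mathrm{ch}[\mathrm{Ker}_{M_n}S] = \mathrm{ch}[L_{\widehat{\mathfrak{sl}_2}}((k-n)\Lambda_0 + n\Lambda_1)]$. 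Matching characters then forces the cyclic module, its image in the kernel, and the irreducible module to coincide, giving the first two isomorphisms at once.

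For the third isomorphism I would pass to the second free-field realization, on $L^{Vir}(c_{p',p}, h^{1,p}_{1,np+1})\otimes \Pi(0)^{\frac12}$, obtained by relabelling the Virasoro factor through the coincidence of irreducible modules at reflected Kac labels, and identify $\mathrm{Ker}\,S$ there with the integrable part. The mechanism is that $e_0$ acts by the charge-raising operator $e^{c}$ and $f_0$ by $e^{-c}$ dressed with Virasoro modes, so a vector lies in $(\,\cdot\,)^{\mathrm{int}_{\mathfrak{sl}_2}}$ exactly when $e_0$ and $f_0$ act locally nilpotently; one checks this holds precisely on $\mathrm{Ker}\,S$, since integrable vectors are annihilated by $S$ while $\mathrm{Ker}\,S$ is a sum of finite-dimensional $\mathfrak{sl}_2$-modules, being the irreducible integrable module just identified. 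Specializing $n=0$, where $h^{p',p}_{1,1}=h^{1,p}_{1,1}=0$, recovers $L_k(\mathfrak{sl}_2) = (L^{Vir}_{c_{p',p}}\otimes \Pi(0)^{\frac12})^{\mathrm{int}_{\mathfrak{sl}_2}}$.

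I expect the main obstacle to be the reverse inclusion $\mathrm{Ker}_{M_n}S \subseteq V^k(\mathfrak{sl}_2).w$, equivalently the character (cohomology) computation for the screening. This is where the resolutions of \cite{A-2019} and \cite{ACGY} do the real work; absent them one must establish the exactness of the full Felder complex in the relevant category of $V^{Vir}_{c_{p',p}}\otimes\Pi(0)$-modules, which is the technical heart of the proof.
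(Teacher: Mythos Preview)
The paper does not prove this proposition at all: it simply records it as a consequence of \cite{A-2019} and \cite{ACGY}. Your sketch is a faithful outline of the arguments that appear in those references---the highest-weight computation for $v_{1,n+1}\otimes e^{\frac{n}{2}c}$, the screening-operator argument placing the cyclic submodule inside $\mathrm{Ker}\,S$, the character identification via a Felder-type resolution, and the passage from $\mathrm{Ker}\,S$ to the $\mathfrak{sl}_2$-integrable part---so your approach matches what the paper is invoking.
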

 
 This result also holds for negative levels such that $k+2 < 0$. Let us look at the negative level case in more details.
 We shall consider special  levels :
 $$k+ 2 = - \frac{1}{p} \quad \mbox{and} \quad  k+2 = - p, $$
 where $p \in  {\Z}_{ >0}$. 
 We have 
 $$ H_{DS} ( L_{\widehat{\frak{sl}_2}} ((-2- \frac{1}{p} -n ) \Lambda_0 +   n  \Lambda_1)   =   L^{Vir} (c_{1, -p}, h^{1, -p} _{1, n+1} ), $$  $$ H_{DS} ( L_{\widehat{\frak{sl}_2}} ((-2- p  -n p  ) \Lambda_0 +   n  p  \Lambda_1)  = L^{Vir} (c_{-p, 1}, h^{-p,1} _{1, n p +1} ). $$
Since  $h_{1, n+1} ^{-1,p} = h_{1, np+1} ^{-p,1}$,  we have $$  L^{Vir} (c_{1, -p}, h^{-1,p} _{1, n+1} ) =   L^{Vir} (c_{-p, 1}, h^{-p,1} _{1, n p +1} ), $$
which implies
 $$ H_{DS} ( L_{\widehat{\frak{sl}_2}} ((-2- \frac{1}{p} -n ) \Lambda_0 +   n  \Lambda_1)  =   H_{DS} ( L_{\widehat{\frak{sl}_2}} ((-2- p  -n p ) \Lambda_0 +   n p   \Lambda_1). $$
%$$L^{Vir} (c_{1, -p}, h_{1, n+1} ) =   L^{Vir} (c_{-p, 1}, h_{1, n p +1} ). $$

\begin{cor}  \label{inverse-negative} Let $k + 2 = -p$ and $n$ a positive integer. Then we have 
 $$  L_{\widehat{\frak{sl}_2}} ((k  -n p ) \Lambda_0 +   n  p  \Lambda_1)   \cong \mbox{\rm Ker} _{    L^{Vir} (c_{-p, 1}, h^{-p,1}_{1, n p +1} )   \otimes \Pi(0)^{\frac12}}   S \cong  \mbox{\rm Ker} _{    L^{Vir} (c_{-p, 1}, h^{1,-p}_{1, n+1} )   \otimes \Pi(0)^{\frac12}}  S , $$
 where  
$S $ is the screening operator given by (\ref{scr-1}). Moreover, we have
$$   L_{\widehat{\frak{sl}_2}} ((k  -n p ) \Lambda_0 +   n  p  \Lambda_1)  =     \left( L^{Vir} (c_{-p, 1}, h^{-p,1}_{1, n p +1} )   \otimes \Pi(0)^{\frac12}\right)   ^{\mathrm{int}_{\mathfrak{sl}_2}} . $$
%where $ v_{2,1}  $  is a singular vector for the Virasoro algebra with conformal weight $\frac{3}{4}  k   + 1$ and $\nu$ as above. 
\end{cor}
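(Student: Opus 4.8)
The plan is to obtain Corollary \ref{inverse-negative} as the specialization of Proposition \ref{opis-1} to the level $k$ determined by $k+2=-p$, combined with the identification of simple Virasoro modules recorded immediately before the statement. First I would fix the parameters: writing $k+2=-p=\tfrac{-p}{1}$, the level sits in the family of Proposition \ref{opis-1} with numerator $-p$ and denominator $1$, so the associated central charge is $c_{-p,1}$ and the screening operator is the level-$(-p-2)$ instance of $S$ from (\ref{scr-1}). The hypotheses hold: for $p\in\Z_{>0}$ we have $k=-p-2\le -3$, so $k\notin\Z_{\ge 0}$ and $k\ne -2$, while $k+2=-p<0$ places us in the negative-level range in which the proposition continues to hold (as noted right after its statement). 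Since $n$ is a positive integer, $m:=np\in\Z_{\ge 0}$, so the proposition may be applied with index $m=np$.

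Next I would read off the two realizations. Taking $m=np$ in Proposition \ref{opis-1}, the highest weight becomes $(k-np)\Lambda_0+np\,\Lambda_1$ and the relevant Virasoro conformal weight becomes $h^{-p,1}_{1,\,np+1}$, so the kernel-of-screening and integrable-part descriptions read
\begin{align*}
L_{\widehat{\mathfrak{sl}_2}}\big((k-np)\Lambda_0+np\,\Lambda_1\big)
&\cong \mathrm{Ker}_{\,L^{Vir}(c_{-p,1},\,h^{-p,1}_{1,np+1})\otimes\Pi(0)^{\frac12}}\,S,\\
L_{\widehat{\mathfrak{sl}_2}}\big((k-np)\Lambda_0+np\,\Lambda_1\big)
&=\Big(L^{Vir}(c_{-p,1},\,h^{-p,1}_{1,np+1})\otimes\Pi(0)^{\frac12}\Big)^{\mathrm{int}_{\mathfrak{sl}_2}}.
\end{align*}
These are precisely the first isomorphism and the ``moreover'' assertion of the corollary.

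Finally I would rewrite the ambient Virasoro factor. Using the symmetry $c_{-p,1}=c_{1,-p}$ together with the weight coincidence $h^{-p,1}_{1,np+1}=h^{-1,p}_{1,n+1}=h^{1,-p}_{1,n+1}$ verified in the display preceding the corollary, the simple Virasoro module $L^{Vir}(c_{-p,1},h^{-p,1}_{1,np+1})$ is literally the same object as $L^{Vir}(c_{-p,1},h^{1,-p}_{1,n+1})$. Substituting this equality in the kernel description produces the second isomorphism, completing the chain.

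The one step I would treat with care is the passage between the Kac-table labels $h^{-p,1}_{1,np+1}$ and $h^{1,-p}_{1,n+1}$: both reduce to $\tfrac{((n+1)p+1)^2-(p+1)^2}{-4p}$, so the equality is an identity in $p$ and $n$ rather than a low-order coincidence, and it is exactly the statement that a single simple Virasoro module occupies dual positions for the levels $k+2=-p$ and $k+2=-\tfrac1p$. This coincidence is what permits the inverse quantum Hamiltonian reduction to interpolate between the two pictures. I would also double-check that the integrable-part clause of Proposition \ref{opis-1} is quoted with the \emph{same} ambient module $L^{Vir}(c_{-p,1},h^{-p,1}_{1,np+1})$ that appears in the kernel clause, so that the ``moreover'' statement matches verbatim; this is the only place where the notational bookkeeping of the proposition (whose integrable-part factor is written with a dual weight label) must be reconciled, and it poses no genuine difficulty once the central charge and weight are pinned down as above.
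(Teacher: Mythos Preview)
Your proposal is correct and matches the paper's approach: the corollary is stated without proof because it is the direct specialization of Proposition~\ref{opis-1} to $k+2=-p$ (with index $m=np$), combined with the Virasoro module identification $L^{Vir}(c_{-p,1},h^{-p,1}_{1,np+1})=L^{Vir}(c_{1,-p},h^{1,-p}_{1,n+1})$ established in the display just before the corollary. Your care about the mismatched weight label in the integrable-part clause of Proposition~\ref{opis-1} is appropriate; this is indeed only a notational artifact, and the ambient Virasoro module is the same as in the kernel clause once the central charge and conformal weight are fixed.
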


Recall that the fusion product of two subsets $A, B$ of a vertex algebra $V$
is
$$A \cdot  B = \mbox{span}_{\C} \{ a_n b \ \vert \ n \in \Z, a, b \in V\}. $$
This product is associative due to Borcherds' identity.

%\begin{lemma}  \label{fusion-lema}
%Let $A, B \subset V$ as above. Then 
%$$ (A \otimes \Pi(0) ^{1/2} ) \cdot (B \otimes \Pi(0)^{1/2} ) = (A \cdot B) \otimes \Pi(0) ^{1/2}.  $$
%\end{lemma}

  \section{Construction of the vertex algebra $\mathcal C_{p}$}
  
% It seems  that Arakawa in \cite{Ara18} (see also \cite{Mc})  constructed the following simple vertex algebras:
  
Next result is a slight variation of the result on generic levels mentioned in the introduction as it only requires some minor modification due to rationality of the level involved.
  \begin{theorem}\label{reg-11} 
%  Assume that $k_1, k_2 \in {\C}$ such that $k_1 + k_2 =-4$ and that the category $KL^{k_i}(\mathfrak{sl_2})$ of $V^{k_i} (\mathfrak{sl}_2)$--modules is semi-simple for $i=1,2$. Then
 %  on the $L_{k_1 } (\mathfrak{sl}_2) \otimes  L_{k_2} (\mathfrak{sl}_2) $--module
  %  $$ \Bbb F_0  =  \bigoplus_{\ell =0} ^{\infty} L_{\widehat{\frak{sl}_2}} ( ( k_1-   \ell  ) \Lambda_0 +    \ell  \Lambda_1) \bigotimes    
 % L_{\widehat{\frak{sl}_2}} ( \left( k_2  - \ell  \right) \Lambda_0 +  \ell  \Lambda_1), 
%$$
For each $p \in \Z_{\ge 1}$,  there exists a  simple vertex operator algebra structure on the $L_{-2 + \frac{1}{p} } (\mathfrak{sl}_2) \otimes  L_{-2 - \frac{1}{p} } (\mathfrak{sl}_2) $--module
  $$ \mathcal U_{p} :=  \bigoplus_{\ell =0} ^{\infty} L_{\widehat{\frak{sl}_2}} (- ( 2 + \frac{1}{p} +  \ell  ) \Lambda_0 +    \ell  \Lambda_1) \bigotimes    
  L_{\widehat{\frak{sl}_2}} (- \left( 2- \frac{1}{p}  +  \ell  \right) \Lambda_0 +  \ell  \Lambda_1).
$$
%has the unique  structure of  simple vertex algebra.
%{\bf Ovo je dobro imati u iskazu:}  
%
$\mathcal U_{p}$ is generated by  the generators of $ L_{-2-1/p} (\mathfrak{sl}_2) \otimes L_{-2+1/p} (\mathfrak{sl}_2)$  and top component of the module    $L_{\widehat{\frak{sl}_2}} (- ( 3 + \frac{1}{p}     ) \Lambda_0 +     \Lambda_1) \bigotimes    L_{\widehat{\frak{sl}_2}} (- \left( 3-  \frac{1}{p}    \right) \Lambda_0 +    \Lambda_1)$ isomorphic to  $V_{\mathfrak{sl}_2} (\omega_1) \otimes V_{\mathfrak{sl}_2} (\omega_1)$.
\end{theorem}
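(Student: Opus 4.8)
The plan is to realize $\mathcal{U}_p$ as the specialization, to the rational value $\kappa=\frac1p$, of the generic-level vertex algebra of chiral differential operators on $SL(2)$ whose $\widehat{\mathfrak{sl}_2}\times\widehat{\mathfrak{sl}_2}$-decomposition is \eqref{CDO}, and to control this specialization by means of the free-field realization of Proposition \ref{opis-1}. Substituting $\kappa=\frac1p$ in \eqref{CDO} (recall $h^\vee=2$, and $\lambda=\lambda^\ast=\ell\omega_1$ for $\mathfrak{sl}_2$) reproduces exactly the pairing of levels $-2-\frac1p$, $-2+\frac1p$ and the summands listed in the statement. First I would fix the generating fields: the two commuting copies of affine currents generating $L_{-2-1/p}(\mathfrak{sl}_2)\otimes L_{-2+1/p}(\mathfrak{sl}_2)$, together with the four fields spanning the $\ell=1$ multiplet $V_{\mathfrak{sl}_2}(\omega_1)\otimes V_{\mathfrak{sl}_2}(\omega_1)$, which are the chiral analogues of the matrix-coefficient functions on $SL(2)$. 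At generic $\kappa$ these fields close into the regular-representation vertex algebra, and their operator products have coefficients that are rational in $\kappa$.

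For existence at $\kappa=\frac1p$, I would work inside the free-field algebra of Proposition \ref{opis-1}: since that proposition applies to both negative levels $-2\pm\frac1p$, each simple summand $L_{\widehat{\mathfrak{sl}_2}}(\cdots)$ of each factor is identified with $\mathrm{Ker}\,S$ (the screening of \eqref{scr-1}) inside $L^{Vir}(c_{p',p},h^{p',p}_{1,\ell+1})\otimes\Pi(0)^{\frac12}$, and $\mathcal{U}_p$ is realized inside the tensor product of two such models. In this picture the structure constants of the generating fields can be written down explicitly and are regular at $\kappa=\frac1p$ for every $p\ge 1$ (the only value to be excluded being the critical level $k=-2$, i.e. $\kappa=0$); hence the generic construction specializes to a genuine vertex algebra supported on the simple modules in the statement. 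The conformal vector is the sum of the two Sugawara vectors, which exist since $-2\pm\frac1p\neq -2$; a direct computation gives the two Sugawara central charges $6p+3$ and $3-6p$, summing to $6=2\dim\mathfrak{sl}_2$, and with respect to this total Virasoro the six currents are primary of weight one. This is exactly the statement that $L_{-2+1/p}(\mathfrak{sl}_2)\otimes L_{-2-1/p}(\mathfrak{sl}_2)$ is conformally embedded.

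For strong generation and simplicity I would induct on $\ell$. By Clebsch--Gordan, $V_{\mathfrak{sl}_2}(\omega_1)\otimes V_{\mathfrak{sl}_2}(\ell\omega_1)=V_{\mathfrak{sl}_2}((\ell+1)\omega_1)\oplus V_{\mathfrak{sl}_2}((\ell-1)\omega_1)$ in each tensor factor, so the product of the $\ell=1$ multiplet with the $\ell$-th summand has nonzero target space in precisely the $(\ell\pm1)$-st summands; in the free-field model this adjacency is the image of the Virasoro fusion rule \eqref{fusion-1-p} applied to the $h^{1,p}_{1,2}$-module. Granting that these product maps are nonzero, upward induction from the vacuum summand $\ell=0$ shows that the affine currents and the $\ell=1$ fields generate all of $\mathcal{U}_p$, giving strong generation; conversely any nonzero ideal is a direct sum of summands, and the same adjacency forces it to contain the vacuum summand and hence everything, giving simplicity.

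The main obstacle is exactly the non-vanishing of these gluing maps at the rational level $\kappa=\frac1p$. At generic $\kappa$ the relevant intertwining operators among the summands are nonzero, but rationality of the admissible level $-2+\frac1p$ permits, a priori, a cancellation of the leading coefficient caused by null vectors. I expect to dispose of this by the free-field realization: writing the $\ell=1$ fields and the highest-weight vector of each summand explicitly in $L^{Vir}(c_{p',p},h^{p',p}_{1,\ell+1})\otimes\Pi(0)^{\frac12}$ and computing the leading coefficient of the relevant product, one checks that it is a nonzero function of $p$ for all $p\ge1$. This reduces the theorem to a single explicit free-field computation, which is the technical heart of the argument.
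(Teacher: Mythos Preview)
Your proposal correctly identifies the chiral differential operators $\mathcal{D}^{ch}_{SL_2,k}$ as the source of the vertex-algebra structure, but you then take a substantially harder route than the paper.  The vertex algebra $\mathcal{D}^{ch}_{SL_2,k}$ exists for \emph{every} level $k$ (not only generic $k$), and is known to be simple; the paper simply sets $k=-2+\tfrac1p$ and invokes this.  The only thing that genuinely requires the rational value $\tfrac1p$ is the decomposition: here the paper uses that both Kazhdan--Lusztig categories $KL^{-2\pm 1/p}(\mathfrak{sl}_2)$ are semisimple, together with the identification of the $t\mathfrak{sl}_2[t]\times t\mathfrak{sl}_2[t]$-invariants of $\mathcal{D}^{ch}_{SL_2,k}$ with $\mathcal{O}(SL_2)$, to conclude directly that $\mathcal{D}^{ch}_{SL_2,-2+1/p}\cong\mathcal U_p$.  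Generation then comes from ${\rm gr}(\mathcal{D}^{ch}_{SL_2,k})\cong\mathcal{O}(J_\infty(T^\ast SL_2))$ and the elementary fact that $V(1)\otimes V(1)$ generates $\mathcal{O}(SL_2)$.  No specialization argument, no free-field computation, and no nonvanishing check are needed.

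Your free-field route has a circularity problem for the \emph{existence} step.  Proposition~\ref{opis-1} realizes each individual module $L_{\widehat{\mathfrak{sl}_2}}((k-n)\Lambda_0+n\Lambda_1)$ inside $L^{Vir}(c_{p',p},h^{p',p}_{1,n+1})\otimes\Pi(0)^{1/2}$, but the direct sum $\bigoplus_n L^{Vir}(c_{p',p},h^{p',p}_{1,n+1})$ is not a priori a vertex algebra either; in the paper such direct sums (e.g.\ $\mathcal V_p$, $I^{-2-1/p}_{SL_2}$) acquire their vertex-algebra structure only \emph{after} $\mathcal U_p$ has been constructed, via QHR.  So the free-field model does not furnish the vertex-algebra structure you want to verify.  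Your fusion/adjacency argument for simplicity and generation is perfectly sound in spirit---indeed it is exactly what the paper does later for $\mathcal C_p$ in Theorem~\ref{general-1}---but for $\mathcal U_p$ itself it is unnecessary, and the ``main obstacle'' you flag (nonvanishing of the gluing maps at $\kappa=\tfrac1p$) is bypassed entirely by citing simplicity of $\mathcal{D}^{ch}_{SL_2,k}$.
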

\begin{proof} It is known that $L_{-2 + \frac{1}{p} } (\mathfrak{sl}_2) \otimes  L_{-2 - \frac{1}{p} } (\mathfrak{sl}_2) \hookrightarrow \mathcal{D}^{ch}_{SL_2,-2+\frac1p}$ and that \cite{Ara18}
$$\left(\mathcal{D}^{ch}_{SL_2,-2+\frac1p}\right)^{t sl_2[t] \times t sl_2[t]} \cong \mathcal{O}(SL_2)=\bigoplus_{\ell \in \mathbb{N}_0} V_{\mathfrak{sl}_2} (\ell  \omega_1) \otimes  V_{\mathfrak{sl}_2} (\ell  \omega_1) .$$ Combining this with complete reducibility of the categories $KL^{-2+\frac1p}(\frak{sl}_2)$ and \\ $KL^{-2-\frac1p}(\frak{sl}_2)$ we get 
that $\mathcal{D}^{ch}_{-2+\frac1p}$ is isomorphic to $\mathcal{U}_{p}$ (exactly the same argument is used Section 9.1.6 of \cite{AM} for the generic level).
But it is known that $\mathcal{D}^{ch}_{-2+\frac1p}$ is simple; see again \cite{AM}.

Using ${\rm gr}(\mathcal{D}^{ch}_{G,k}) \cong \mathcal{O}(J_\infty(T^* G))$   (arc algebra of $T^*G \cong \frak{g}^* \times G$), we see that $ \mathcal{D}^{ch}_{G,k}$ is generated by the top component $\mathcal{O}(G)$, and elements of $\frak{g}$, embedded in the degree one component. For $G=SL_2$, it is not hard to see using explicit action of $\frak{sl}_2 \times \frak{sl}_2$ on $\mathcal{O}(SL_2)$ 
(see for instance \cite{FS}) that $V(1) \otimes V(1)$  generates $\mathcal{O}(SL_2)$, and therefore the algebra $\mathcal{D}^{ch}_{SL_2,k}$ is generated by vectors in 
$L_{\widehat{\frak{sl}_2}}( (- 2-k) \Lambda_0) \bigotimes    
  L_{\widehat{\frak{sl}_2}} (\left( -2+k \right)\Lambda_0)$ and  $L_{\widehat{\frak{sl}_2}}( (- 3-k) \Lambda_0 +   \Lambda_1) \bigotimes    
  L_{\widehat{\frak{sl}_2}} (\left( -3+k \right)\Lambda_0+\Lambda_1)$, for all $k$. In particular, everything holds for  $k=-2 + \frac{1}{p}$.
\end{proof}

%{\bf Ovo bi predlozio da ide u dokaz teorema.}
%{\it Since ${\rm gr}(\mathcal{D}^{ch}_{G,k}) \cong J_\infty(O(G))$ \cite{Arakawa} (arc algebra of $O(G)$), is generated by $O(G)$. For $G=SL_2$, it is not hard to see using explicit action of $\frak{sl}_2 \times \frak{sl}_2$ on $O(SL_2)$ 
%(see for instance \cite{FS}) that $V(1) \otimes V(1)$  generates $O(SL_2)$, and therefore the algebra of CDOs $\mathcal{D}^{ch}_{SL_2,k}$ is generated by  $L_{\widehat{\frak{sl}_2}}( (- 3-k) \Lambda_0 +   \Lambda_1) \bigotimes    
%  L_{\widehat{\frak{sl}_2}} (\left( -3+k \right)\Lambda_0+\Lambda_1)$.}

\begin{remark} When $k \notin \mathbb{Q}$, that is level $k$ is generic, then a version of this result was also obtained more explicitly by I. Frenkel and K. Styrkas  \cite{FS} using Wakimoto-type construction. In their setup, $\mathcal{D}^{ch}_{SL_2,k}$ appears as a subalgebra of their modified regular representation. 
%Their approach is different than T. Arakawa in \cite{Ara18} proved simplicity and uniqueness of  of this realization. 
Their proof also relies on the use of semi-simplicity of the category $KL^{k}(\mathfrak{sl_2})$ when $k$ is generic; see also \cite{Mc}. 

%So we have realization of $\mathcal U_p$ as we stated above.

\end{remark}

Let $f$ (resp. $\overline f$)  be principal nilpotent element in first  (respond. second) copy of $\mathfrak{sl}_2$.  Arakawa in   \cite{Ara18} (see also commments in  \cite{Mc}) consider the QHR related to $f$ and $\overline f$. Applying his construction on the vertex algebra $\mathcal U_p$, which is simple by the theorem above, we  get:

\begin{cor}  We have the following simple vertex algebras:
  $$  \mathcal V_{p}  = H_{DS, f} (\mathcal U_{p}) =  \bigoplus_{\ell =0} ^{\infty}  L^{Vir} (c_{1,-p}, h^{1,-p}_{1, \ell +1}) \bigotimes    
  L_{\widehat{\frak{sl}_2}} (- \left( 2- \frac{1}{p}  +  \ell  \right) \Lambda_0 +  \ell  \Lambda_1),
$$
 $$ H_{DS, \overline f } (\mathcal U_{p}) =  \bigoplus_{\ell =0} ^{\infty}  L_{\widehat{\frak{sl}_2}} (- ( 2 + \frac{1}{p} +  \ell  ) \Lambda_0 +    \ell  \Lambda_1)   \bigotimes    
  L^{Vir} (c_{1,p}, h^{1,p} _{1, \ell +1}),
$$
 $$ I^{-2 - \frac{1}{p}} _{SL_2}  = H_{DS, f, \overline f } (\mathcal U_{p}) =  \bigoplus_{\ell =0} ^{\infty}   L^{Vir} (c_{1,-p}, h^{1, -p} _{1, \ell +1})   \bigotimes    
  L^{Vir} (c_{1,p}, h^{1,p} _{1, \ell +1}).
$$
 \end{cor}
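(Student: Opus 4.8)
The plan is to apply the Drinfeld--Sokolov reduction functors $H_{DS,f}$ and $H_{DS,\overline f}$ to the decomposition of $\mathcal U_p$ established in Theorem \ref{reg-11}, exploiting that these two functors act on disjoint tensor factors. Since $f$ is the principal nilpotent in the \emph{first} copy of $\mathfrak{sl}_2$, the functor $H_{DS,f}$ operates only on the left tensor factor of each summand $L_{\widehat{\frak{sl}_2}}(-(2+\frac1p+\ell)\Lambda_0+\ell\Lambda_1)\otimes L_{\widehat{\frak{sl}_2}}(-(2-\frac1p+\ell)\Lambda_0+\ell\Lambda_1)$, leaving the right factor untouched, and symmetrically for $\overline f$. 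First I would invoke the exactness of $H_{DS,f}$ on the category $KL$ at the relevant negative level (this is Arakawa's theorem, already cited via \cite{Ara18} and \cite{Arakawa}), which guarantees that reduction commutes with the direct sum and sends each irreducible summand to an irreducible (or zero) object.

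The computational heart is the identity
\begin{equation}
H_{DS,f}\bigl(L_{\widehat{\frak{sl}_2}}((-2-\tfrac1p-\ell)\Lambda_0+\ell\Lambda_1)\bigr)=L^{Vir}(c_{1,-p},h^{1,-p}_{1,\ell+1}),
\end{equation}
which is precisely the first displayed reduction formula proved in Section \ref{iqhr}. Applying $H_{DS,f}$ summand by summand therefore yields $\mathcal V_p$ as stated; applying $H_{DS,\overline f}$ instead and using the symmetric formula $H_{DS}(L_{\widehat{\frak{sl}_2}}((-2-\frac1p-\ell)\Lambda_0+\ell\Lambda_1))=L^{Vir}(c_{1,p},h^{1,p}_{1,\ell+1})$ at the conjugate level gives the second decomposition. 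For the third, I would observe that the two reductions are performed with respect to commuting $\mathfrak{sl}_2$-triples sitting in the two different factors, so the composite $H_{DS,f,\overline f}=H_{DS,f}\circ H_{DS,\overline f}$ is well-defined and applies the two formulas to the two factors independently, producing the pure Virasoro tensor product. Simplicity of each $\mathcal V_p$ and of $I^{-2-\frac1p}_{SL_2}$ follows because $\mathcal U_p$ is simple (Theorem \ref{reg-11}) and DS reduction sends simple vertex algebras admitting such good gradings to simple vertex algebras.

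The main obstacle I anticipate is not the bookkeeping of highest weights but justifying that each reduction functor sends the \emph{simple} affine module to the \emph{simple} Virasoro module, with no vanishing and no appearance of a reducible quotient. One must check that the levels $-2-\frac1p$ and $-2+\frac1p$ and the corresponding highest weights lie in the range where $H_{DS}$ is exact and maps irreducibles to irreducibles; this is where the admissibility/KL-category hypotheses of \cite{Ara18} enter, and where the equality $h^{-1,p}_{1,n+1}=h^{-p,1}_{1,np+1}$ recorded above becomes essential to match the two natural labelings of the resulting Virasoro central charges. A secondary technical point is confirming that the grading on $\mathcal U_p$ is \emph{good} for both $f$ and $\overline f$ simultaneously, so that the iterated reduction $H_{DS,f,\overline f}$ is independent of the order and the associated characters factor as a product; once this is in place, the three decompositions follow formally from Theorem \ref{reg-11} and the reduction identities of Section \ref{iqhr}.
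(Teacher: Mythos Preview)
Your approach is essentially the one the paper takes: the paper gives no detailed proof at all, simply stating that Arakawa's construction in \cite{Ara18} applied to the simple $\mathcal U_p$ yields the three simple vertex algebras, and you have correctly unpacked this into the exactness of $H_{DS}$ on $KL$, the termwise reduction formulas from Section~\ref{iqhr}, and the independence of the two commuting reductions.

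One imprecision worth flagging: your justification for simplicity, that ``DS reduction sends simple vertex algebras admitting such good gradings to simple vertex algebras,'' is not a general theorem and is false as stated. What actually makes the reductions simple here is Arakawa's specific result for chiral differential operators (that $H_{DS,f}(\mathcal D^{ch}_{G,k})$, $H_{DS,\overline f}(\mathcal D^{ch}_{G,k})$, and the chiral universal centralizer are simple), which is exactly what the paper is invoking when it cites \cite{Ara18}. Also, in your ``symmetric formula'' the level should be $-2+\tfrac1p$, not $-2-\tfrac1p$; the phrase ``at the conjugate level'' makes your intent clear, but the displayed weight is wrong.
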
 
The vertex algebra $I^{k} _{SL_2}$ is called the {\em chiral universal centralizer} \cite{Arakawa,FS}, and $H_{DS, \overline f } (\mathcal U_{p})$ is a certain {equivariant} $\mathcal{W}$-algebra as in \cite{Arakawa}. 

\begin{theorem} \label{general-1}   Let $\g_0 = \mathfrak{sl}_2 \times \mathfrak{sl}_2$.  There exist a homomorphism of vertex algebras \bea  V(\g_0) := V^{-p-2} ( \mathfrak{sl}_2 ) \otimes V^{-2 + \frac{1}{p}} ( \mathfrak{sl}_2) \rightarrow   \mathcal V_p  \otimes \Pi(0) ^{\frac12} \label{action-11}\eea  such that
$$ \mathcal C_p =  \left( \mathcal V_{p} \otimes \Pi(0) ^{\frac12}  \right)^{\mathrm{int}_{\g_0} } \cong  \bigoplus_{\ell =0} ^{\infty} W_p(\ell), $$
 %where $S$ is a screening operator defined by (\ref{scr-1})
 % for $k=-2-p$ and
 where
$$ W_p(\ell) =L _{\widehat{\frak{sl}_2}} (- (2 + p  + p \ell  ) \Lambda_0 +  p  \ell  \Lambda_1) \bigotimes    
  L_{\widehat{\frak{sl}_2}} (- (2 - \frac{1}{ p}  +   \ell  ) \Lambda_0 +   \ell  \Lambda_1), $$
 % and $ \left( \mathcal V_{p} \otimes \Pi(0) ^{1/2}  \right)^{\mbox{int}_{\g_0} } $ is the maximal $\g_0$--integrable part of  $\mathcal V_{p} \otimes \Pi(0) ^{1/2}$ with respect to
  and   $\g_0$ acts  on  $\mathcal V_{p} \otimes \Pi(0) ^{1/2} $  via the homomorphism   (\ref{action-11}). 
   Moreover, $\mathcal C_p$ is a simple vertex algebra  generated by $W_p(0) + W_p(1)$.
\end{theorem}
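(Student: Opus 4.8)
The plan is to construct the $\g_0$-action explicitly, read off the decomposition summand-by-summand from Corollary~\ref{inverse-negative}, and then reduce both the simplicity and the generation claims to a single fusion-ladder estimate.

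\textbf{Action and decomposition.} First I would build the homomorphism~\eqref{action-11}. The second copy $V^{-2+\frac1p}(\mathfrak{sl}_2)$ acts diagonally on the second tensor factor of $\mathcal V_p$, trivially on the Virasoro factor and on $\Pi(0)^{\frac12}$. For the first copy I use the free-field homomorphism $\Phi$ of \cite{A-2019} (see~\eqref{def-e-3}): since $c_{1,-p}=c_{-p,1}$, $\Phi$ realizes $V^{-p-2}(\mathfrak{sl}_2)$ inside $V^{Vir}_{c_{-p,1}}\otimes\Pi(0)$, and I apply it to the Virasoro factor of $\mathcal V_p$ together with the even part $\Pi_0(0)^{\frac12}=\Pi(0)$ of $\Pi(0)^{\frac12}$. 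The two images live on disjoint tensor factors, hence commute. Then $\mathrm{int}_{\g_0}=\mathrm{int}_{\mathfrak{sl}_2^{(1)}}\cap\mathrm{int}_{\mathfrak{sl}_2^{(2)}}$ is computed one summand at a time: each second factor lies in $KL^{-2+\frac1p}$ and is already $\mathfrak{sl}_2^{(2)}$-integrable, so it survives untouched; for the first factor I rewrite $L^{Vir}(c_{1,-p},h^{1,-p}_{1,\ell+1})=L^{Vir}(c_{-p,1},h^{-p,1}_{1,\ell p+1})$ and apply Corollary~\ref{inverse-negative} with $k=-p-2$, $n=\ell$, obtaining $L_{\widehat{\mathfrak{sl}_2}}(-(2+p+p\ell)\Lambda_0+p\ell\Lambda_1)$. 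Summing over $\ell$ gives $\mathcal C_p=\bigoplus_\ell W_p(\ell)$.

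\textbf{The fusion ladder (crux).} The heart of the moreover is the estimate $W_p(\ell-1)+W_p(\ell+1)\subseteq W_p(1)\cdot W_p(\ell)$ for $\ell\ge1$. I would compute this fusion inside the ambient $\mathcal V_p\otimes\Pi(0)^{\frac12}$ factorwise: the Virasoro factors fuse by~\eqref{fusion-1-p}, giving $h^{1,-p}_{1,2}\times h^{1,-p}_{1,\ell+1}=h^{1,-p}_{1,\ell}+h^{1,-p}_{1,\ell+2}$, while in the second factor $L(\omega_1)$ fuses with $L(\ell\omega_1)$ in $KL^{-2+\frac1p}$ to produce $L((\ell\pm1)\omega_1)$, with $\Pi(0)^{\frac12}$ tracking the matching lattice charge. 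Passing to $\mathrm{int}_{\g_0}$ via the identification above converts the Virasoro index shift $\ell\mapsto\ell\pm1$ into the jump-by-$p$ in the first affine factor of $W_p$, so the two inclusions follow once the relevant intertwiners are shown nonzero on the integrable part.

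\textbf{Generation and simplicity.} Generation is then immediate: by the ladder and induction $W_p(\ell)\subseteq(W_p(0)+W_p(1))^{\cdot\ell}$, and $W_p(0)\cong L_{-2-p}(\mathfrak{sl}_2)\otimes L_{-2+\frac1p}(\mathfrak{sl}_2)$ is generated by the $\g_0$-currents; hence $W_p(0)+W_p(1)$ generates $\mathcal C_p$. For simplicity, a nonzero ideal $I$ is a $\g_0$-submodule, so $I=\bigoplus_{\ell\in S}W_p(\ell)$ since each $W_p(\ell)$ is $\g_0$-irreducible; with $\ell_0=\min S$, the ideal property together with the ``down'' inclusion $W_p(\ell_0-1)\subseteq W_p(1)\cdot W_p(\ell_0)$ forces $\ell_0=0$, whence $\vac\in I$ and $I=\mathcal C_p$. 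The main obstacle is the non-vanishing in the ``up'' direction needed for generation: one must check that the intertwiner realizing $W_p(\ell)\to W_p(\ell+1)$ survives the projection onto the $\g_0$-integrable part rather than landing in its non-integrable complement. Concretely this amounts to showing that the top-component generator of $W_p(1)$ (the copy of $V_{\mathfrak{sl}_2}(\omega_1)\otimes V_{\mathfrak{sl}_2}(\omega_1)$ from Theorem~\ref{reg-11}) acts with nonzero creation component on each $W_p(\ell)$; I would establish this either by transporting the generation statement for $\mathcal U_p$ through $H_{DS,f}$ and the inverse reduction, or by a direct highest-weight computation using the screening~\eqref{scr-1} and the free-field formulas~\eqref{def-e-3}.
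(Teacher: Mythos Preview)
Your outline follows the same route as the paper: construct the $\g_0$--action from the free-field realization~\eqref{def-e-3}, read off the decomposition summand-by-summand via Corollary~\ref{inverse-negative}, and deduce simplicity and generation from a fusion ladder for the $W_p(\ell)$. The place where your sketch remains vague is exactly where the paper does the real work, and there is one small error.

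The error: in your simplicity argument you write ``each $W_p(\ell)$ is $\g_0$-irreducible''. It is not; as a module for the \emph{finite} Lie algebra $\g_0$ each $W_p(\ell)$ is an infinite direct sum. What you need (and what the paper uses) is that $W_p(\ell)$ is irreducible as a $W_p(0)=L_{-2-p}(\mathfrak{sl}_2)\otimes L_{-2+1/p}(\mathfrak{sl}_2)$--module, so any ideal decomposes along the $\ell$-grading.

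The substantive gap is the nonvanishing in the ladder, where you correctly locate the difficulty but your proposed resolutions are vaguer than what is actually needed. The paper does \emph{not} verify the $W_p$-level intertwiners directly. Instead it first establishes the equality $V_p(1)\cdot V_p(\ell)=V_p(\ell-1)\oplus V_p(\ell+1)$ inside $\mathcal V_p$ by a purely structural argument that requires \emph{both} ingredients: generation of $\mathcal V_p$ by $V_p(0)+V_p(1)$ (transported from $\mathcal U_p$ via $H_{DS,f}$, as you suggest) rules out the ``only-down'' case (it would yield a proper subalgebra containing the generators), while \emph{simplicity} of $\mathcal V_p$ rules out the ``only-up'' case (it would yield a proper ideal). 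Your sketch invokes only generation; without simplicity you do not get the down inclusion your ideal argument needs. Once the $\mathcal V_p$-fusion is in hand, the bridge to $\mathcal C_p$ is the explicit embedding $V_p(\ell)_{\rm top}\cong \C v_{1,1+p\ell}\otimes V_{\mathfrak{sl}_2}(\ell\omega_1)\otimes\C e^{\frac{p\ell}{2}c}\hookrightarrow W_p(\ell)$ coming from Proposition~\ref{opis-1}: the vectors $v_{1,1+p(\ell\pm1)}\otimes v_{\ell\pm1}$ already lie in $V_p(1)_{\rm top}\cdot V_p(\ell)_{\rm top}$, so tensoring with the appropriate lattice exponential (and applying $(e_2)_{-1}^p$ in the down case) exhibits nonzero elements of $W_p(\ell\pm1)$ inside $W_p(1)\cdot W_p(\ell)$. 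This top-component embedding is the concrete mechanism your ``transport through inverse reduction'' is gesturing at.
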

\begin{proof}

The decomposition   follows  directly by applying Corollary  \ref{inverse-negative}. 
It remains to prove  the simplicity  of $ \mathcal C_p$.   
Let $$V_p {(\ell)}  =   L^{Vir} (c_{1,-p}, h^{1,-p}_{1, \ell +1}) \bigotimes    
  L_{\widehat{\frak{sl}_2}} (- \left( 2- \frac{1}{p}  +  \ell  \right) \Lambda_0 +  \ell  \Lambda_1) \subset    \mathcal V_{p}. $$
Since $\mathcal U_{p}$ is weakly generated by $ A =  \mathfrak{sl}_2 \times \mathfrak{sl}_2 +  V_{\mathfrak{sl}_2} (  \omega_1)   \otimes V_{\mathfrak{sl}_2} (  \omega_1)   \subset  L_{-2-1/p} (\mathfrak{sl}_2) \otimes L_{-2+1/p} (\mathfrak{sl}_2) \oplus   L_{\widehat{\frak{sl}_2}} (- ( 3 + \frac{1}{p}     ) \Lambda_0 +     \Lambda_1) \bigotimes    L_{\widehat{\frak{sl}_2}} (- \left( 3-  \frac{1}{p}    \right) \Lambda_0 +    \Lambda_1)$, 
one can prove (as in \cite{KW})   that the vertex algebra   $\mathcal V_p$ is generated by  
$$ A^f = {\C} f \times \mathfrak{sl}_2 \oplus {\C} v_{-1} \otimes  V_{\mathfrak{sl}_2} (  \omega_1), $$
where $v_{-1}$ is the lowest weight vector in $V_{\mathfrak{sl}_2} (  \omega_1) $. The space $A^f$ generates $L^{Vir}_{c_{1,-p}} \otimes L_{-2 + \frac{1}{p}}(\mathfrak{sl}_2)$--module 
$V_p(0) \oplus V_p(1)$.

Since the vertex algebra $  \mathcal V_{p}$ is simple, we have that 
   $V_p(1) \cdot V_p(\ell) \ne \{0\}$. 
  %$$ W_p(1) \cdot W_p( \ell) = W_{p}(\ell +1) \oplus W_p(\ell-1) \quad (\ell \in {\Z}_{\ge 1}). $$  
  We claim that 
  \bea V_p(1) \cdot V_p( \ell)  = V_{p}(\ell +1) \oplus V_p(\ell-1) \quad (\ell \in {\Z}_{\ge 1}). \label{fusion-p} \eea
By  using the  fusion rules for $L_{-2+ \frac{1}{p}}(\mathfrak{sl}_2)$--modules \cite{ACGY}, we  have that \bea V_p(1) \cdot V_p( \ell) \subseteq V_{p}(\ell +1) \oplus V_p(\ell-1) \quad (\ell \in {\Z}_{\ge 1}). \label{fusion-p-2} \eea

Assume that there exists minimal  $\ell_0 \ \in {\Z}_{\ge 1}$ such that the equation (\ref{fusion-p}) does not hold. Then we have two possibilities: 
\begin{itemize}
 \item[(i)] $V_p(1) \cdot V_p( \ell_0) = V_{p}(\ell_0 +1)$; or 
  \item[(ii)] $V_p(1) \cdot V_p( \ell_0) = V_{p}(\ell_0 -1)$.
  \end{itemize}
  If (i) holds, using (\ref{fusion-p-2}), we easily get that $\bigoplus _{\ell = \ell_0} ^{ \infty} V_p(\ell)$ is a  proper ideal in $\mathcal V_p$. But this is not possible, since $\mathcal V_p$ is simple.
  
  If (ii) holds, we get that $\bigoplus _{\ell = 0 } ^{ \ell _0} V_p(\ell)$  is a proper subalgebra of $\mathcal V_p$, which is generated by $V_p(0) + V_p(1)$. This contradicts the fact that $V_p(0) + V_p(1)$ generates $\mathcal V_p$.
  Therefore (\ref{fusion-p}) holds.

  We get that 
  $$ \mathcal C_p =   \bigoplus_{\ell =0} ^{\infty} W_p(\ell), $$
  where   $$ W_p(\ell) =L _{\widehat{\frak{sl}_2}} (- (2 + p  + p \ell  ) \Lambda_0 +  p  \ell  \Lambda_1) \bigotimes    
  L_{\widehat{\frak{sl}_2}} (- (2 - \frac{1}{ p}  +   \ell  ) \Lambda_0 +   \ell  \Lambda_1). $$
Next, we shall prove that the following fusion product holds
   \bea W_p(1) \cdot W_p( \ell)  = W_{p}(\ell +1) \oplus W_p(\ell-1) \quad (\ell \in {\Z}_{\ge 1}). \label{fusion-w-p}\eea

   The fusion rules for    $L^{Vir}_{c_{1,p}}$--modules  (\ref{fusion-1-p}),  and the decomposition of $\mathcal C_p$  implies that
  \bea  W_p(1) \cdot W_p (\ell) \subset W_p(\ell-1) \oplus W_p(\ell +1).  \label{ograda-w-p-2}\eea
   
 Using Proposition  \ref{opis-1}  again  we get that  $$W_p(\ell)  = W_p(0) (    v_{1,1+ \ell p } \otimes v_{\ell} \otimes e^{\frac{ p \ell }{2} c} ) \subset   V_p(\ell) \otimes \Pi(0) ^{1/2},  $$
 where $v_{\ell}$ is a  highest weight  vector in $L_{\widehat{\frak{sl}_2}} (- ( 2+ \ell -\frac{1}{p}     ) \Lambda_0 +   \ell  \Lambda_1) $  and   $v_{1,1+ p\ell} $ is a highest weight vector in    $L^{Vir} (c_{1,-p}, h^{-p,1} _{1, p\ell+1} )$.
 Moreover, the top component of $V_p(\ell)$ is embedded into  $W_p(\ell)$ in the following way
 \bea  V_p(\ell) _{top} \cong   {\C}    v_{1,1+ p \ell} \otimes  V_{\mathfrak{sl}_2} (\ell \omega_1)  \otimes {\C} e^{\frac{\ell p }{2} c} \subset  W_p(\ell). \label{realization-top-fusion} \eea 
 %Moreover,  the top component of $W_p(\ell)_{top}$ contains a $\mathfrak{sl}_2$--submodule isomorphic to $V_{\mathfrak{sl}_2} (p \omega_1)$,
%  which is spanned  by the vectors
%$$ Z_{i,\ell} \otimes e^{\frac{\ell}{2} c},  \ 0 \le i \le p\ell, $$
%where $Z_{i,\ell}:=f(0) ^i  v_{p \ell} \otimes   v_{1,1+ \ell}  \in \mathcal V_p$.

The   fusion product 
   $V_p(1) \cdot V_p(\ell) = V_p(\ell -1) + V_p(\ell +1),  (\ell \ge 1)$ of $V_p(0)$--submodules of $\mathcal V_p$,  implies that 
   $$      v_{1,1+ p( \ell \pm 1) }  \otimes v_{ (\ell \pm 1) } \in V_p(1)_{top}  \cdot V_p(\ell)_{top}. $$
 %  we conclude that $Z_{0, \ell\pm 1}$ must appear in the fusion product of  $V_p(1) \cdot V_p(\ell)$, which implies that 
%   $$ (Z_{0, 1} )_{-1} (Z_{0, \ell } ) = \mu_{1} Z_{0,  \ell+1}, $$
%   for certain non-zero constant $\mu_{1}$, and that
%  $$   \sum_{i = 0} ^p \mu_{2,i}  (Z_{p-i  , 1} )_{0} (Z_{i , \ell } ) =  Z_{0,  \ell-1}. $$
 % for certain $ \mu_{2,i} \in {\C}$.
   %and using   
 %  (\ref{ograda-w-p-2}) 
 %we get that 
  % $$ (V_p(1)  \otimes \Pi(0) ^{1/2} ) \cdot  (V_p(\ell)  \otimes \Pi(0) ^{1/2} ) = V_p(\ell -1) \otimes \Pi(0) ^{1/2}  + V_p(\ell +1) \otimes \Pi(0) ^{1/2}. $$
Applying (\ref{realization-top-fusion}) we get  $W_p(1) \cdot W_p(\ell)$ contains non-zero vectors 
   $$    v_{1,1 + p (\ell+1)}  \otimes v_{\ell+1} \otimes e^{\frac{(\ell+1) p}{2} c}  \in W_p(\ell +1), $$ 
 $$    v_{1,1 + p (\ell-1)}  \otimes v_{\ell- 1}   \otimes  e^{\frac{p (\ell+1) }{2} c}   = ( (e_2)_{-1} )^p    \left( v_{1,1 + p (\ell-1)}  \otimes v_{\ell- 1}  \otimes   e^{\frac{ p(\ell-1)}{2} c} \right)  \in W_p(\ell-1), $$
 where $e_2 = e^{ c} \in W_p(0)$.
 
%{\bf Ovo gore jos moram provjeriti!}
 
   %for certain    $v', v''  \in \Pi(0) ^{1/2}$. 
   This proves that $$  W_p(1) \cdot W_p(\ell) \supset W_p(\ell-1) \oplus W_p(\ell +1),    $$
   and therefore the  fusion product  (\ref{fusion-w-p}) holds.  Since all components $W_p(\ell)$  appearing in the decomposition of $\mathcal C_p$  are simple $W_p(0)$--modules, we get $\mathcal C_p$ is a simple vertex algebra.

   The fusion product  (\ref{fusion-w-p}) also implies that  $\mathcal C_p$ is  generated by $ W_p(0) + W_p( 1)$. 

  \end{proof}
  
  \begin{cor} For $p>2$, $\mathcal{C}_p$ is strongly generated by six generators of conformal weight $1$, generators of $\widehat{sl}_2 \times \widehat{sl}_2$, and $2p+2$ primaries of 
  weight $\frac{p-1}{2}$. Therefore $\mathcal{C}_p$ is a $W$-algebra of type $(1^6,(\frac{p-1}{2})^{2p+2})$. For $p=2$, the algebra is of type $(\frac12)^6$.
    \end{cor}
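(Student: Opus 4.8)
The plan is to extract the strong generators directly from the decomposition $\mathcal C_p=\bigoplus_{\ell\ge 0}W_p(\ell)$ together with the fact, proved in Theorem~\ref{general-1}, that $\mathcal C_p$ is generated as a vertex algebra by $W_p(0)+W_p(1)$. First I would locate the new generators by computing the minimal conformal weight of $W_p(1)$. Recall that a highest weight $\widehat{\mathfrak{sl}}_2$-module at level $k$ whose top has Dynkin label $n$ has lowest conformal weight $\frac{n(n+2)}{4(k+2)}$. For the first factor of $W_p(1)$ one has $n=p$, $k+2=-p$, giving $-\frac{p+2}{4}$; for the second factor $n=1$, $k+2=\frac1p$, giving $\frac{3p}{4}$. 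Hence the top of $W_p(1)$ lies in conformal weight $\frac{p-1}{2}$, and as a $\mathfrak g_0=\mathfrak{sl}_2\times\mathfrak{sl}_2$-module it equals $V_{\mathfrak{sl}_2}(p\omega_1)\otimes V_{\mathfrak{sl}_2}(\omega_1)$, of dimension $(p+1)\cdot 2=2p+2$. Being the minimal-energy subspace of a conformal module, these $2p+2$ vectors are primary of weight $\frac{p-1}{2}$. The same computation gives the lowest weight $\frac{\ell(p-1)}{2}$ of $W_p(\ell)$, which I would record for the weight bookkeeping below.

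Next, set $G=\{e,h,f\ \text{of each}\ \mathfrak{sl}_2\text{-factor of}\ W_p(0)\}\cup\{\text{the top of}\ W_p(1)\}$ and let $\langle G\rangle$ be the span of all $a^1_{(-n_1)}\cdots a^r_{(-n_r)}\vac$ with $a^j\in G$ and $n_j\ge 1$. By the reconstruction theorem $\langle G\rangle$ is a vertex subalgebra, hence equals the vertex subalgebra generated by $G$. Now $W_p(0)=L_{-2-p}(\mathfrak{sl}_2)\otimes L_{-2+\frac1p}(\mathfrak{sl}_2)$ is strongly generated by its six currents, so $W_p(0)\subseteq\langle G\rangle$; and $W_p(1)$, being an irreducible highest weight $\widehat{\mathfrak g}_0$-module, is obtained from its top by applying the negative modes $x_{(-n)}$ ($n\ge 1$) of the currents, all of which preserve $\langle G\rangle$ since it is a vertex subalgebra containing both the currents and the top. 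Hence $W_p(1)\subseteq\langle G\rangle$, so $\langle G\rangle\supseteq W_p(0)+W_p(1)$, which generates $\mathcal C_p$ by Theorem~\ref{general-1}. Therefore $\langle G\rangle=\mathcal C_p$ and $G$ strongly generates $\mathcal C_p$.

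For $p>2$ this generating set is minimal of the claimed type. Applying any negative mode of a field of weight $\ge 1$ strictly raises the conformal weight, so the $2p+2$ primaries cannot be reduced to fewer; moreover they lie in $W_p(1)$, while the subalgebra generated by the currents is exactly $W_p(0)$, and the two summands are disjoint, so all $2p+2$ are genuinely new generators. The weight estimate $\frac{\ell(p-1)}{2}\ge p-1>1$ for $\ell\ge 2$ (with $p>2$) shows that the summands $W_p(\ell)$, $\ell\ge 2$, contribute no further generators, as each is produced by the fusion rule \eqref{fusion-w-p}, namely $W_p(1)\cdot W_p(\ell-1)\supseteq W_p(\ell)$. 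This yields the type $(1^6,(\frac{p-1}{2})^{2p+2})$. For $p=2$ the primaries have weight $\frac12$ and there are $2p+2=6$ of them; here I would instead invoke $\mathcal C_2\cong M_{(3)}$ from the introduction, whose six $\beta\gamma$-fields strongly generate and realize the weight-one currents as normally ordered quadratics, so the currents become redundant and the type collapses to $(\frac12)^6$.

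The main obstacle is the passage from ``generated as a vertex algebra by $W_p(0)+W_p(1)$'' to strong generation by the much smaller set $G$; I would resolve it by the observation that the strongly generated subspace $\langle G\rangle$ is itself a vertex subalgebra, so it suffices that it contain $W_p(0)+W_p(1)$, reducing the claim to the two standard facts that affine vertex algebras are strongly generated by their currents and that an irreducible highest weight module is generated from its top by negative current modes. A secondary point requiring separate treatment is the degeneration at $p=2$, where the weight-$\frac12$ fields take over and render the currents redundant.
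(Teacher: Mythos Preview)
Your identification of the generators and the weight/dimension computations are correct, and the inclusion $W_p(0)+W_p(1)\subseteq\langle G\rangle$ is fine (applying a negative mode of an element of $G$ to an element of $\langle G\rangle$ stays in $\langle G\rangle$ by definition, so no subalgebra property is needed for that step). The genuine gap is the passage from ``$W_p(0)+W_p(1)$ weakly generates $\mathcal C_p$'' to $\langle G\rangle=\mathcal C_p$. You make this step by asserting that $\langle G\rangle$ is a vertex subalgebra ``by the reconstruction theorem,'' but that is not what the reconstruction theorem says: the strong span of an arbitrary subset of a vertex algebra is not in general closed under non-negative modes. Your final paragraph correctly flags this as the main obstacle but then resolves it by restating the same unproven claim. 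Likewise, the observation $W_p(1)\cdot W_p(\ell-1)\supseteq W_p(\ell)$ does not help, since the dot product uses all modes, not only negative ones.

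The gap can be repaired with exactly the weight data you recorded: one checks that $a_{(n)}b\in\langle G\rangle$ for $a,b\in G$ and $n\ge 0$. The only non-obvious case is $a,b\in W_p(1)^{\mathrm{top}}$, where $a_{(n)}b$ has conformal weight $(p-1)-n-1\le p-2$; by the fusion rule it lies in $W_p(0)\oplus W_p(2)$, and since the minimal weight of $W_p(2)$ is $p-1$ it must lie in $W_p(0)\subseteq\langle G\rangle$. Once OPE closure on $G$ is verified, $\langle G\rangle$ is indeed a vertex subalgebra and your argument goes through. The paper instead bypasses this OPE-closure check: from the proof of the fusion rules it extracts the specific inclusion $W_p(\ell+1)^{\mathrm{top}}\subset \bigl(W_p(1)^{\mathrm{top}}\bigr)_{-1}\,W_p(\ell)^{\mathrm{top}}$, which by induction places every $W_p(\ell)^{\mathrm{top}}$, and hence every $W_p(\ell)$, inside $\langle G\rangle$ directly.
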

\begin{proof}  For the generators of weight  $\frac{p-1}{2}$, we choose a basis of the top component of $W(1)^{top}$ of conformal weight $\frac{p-1}{2}$. To see that these are strong generators,
it is enough to observe, using the fusion rules in the theorem, that we have
$$W_p(\ell+1)^{top} \subset W_p(1)^{top}_{-1} (W_p(\ell)^{top}).$$
This inductively gives that all $W_p(\ell)$ are strongly generated by $W_p(1)^{top}$ and $\widehat{sl}_2 \times \widehat{sl}_2$.
For $p=2$, we have $\mathcal{C}_2 = M_{(3)}$ so the type is clearly as stated.
\end{proof}

For even $p$, vertex algebras $\mathcal{C}_p$ are all $\frac12 \mathbb{Z}$-graded, so we can consider the maximal $\mathbb{Z}$-graded subalgebra $\mathcal{C}_p^0$.
\begin{cor} For $p$ even, $\mathcal{C}_p^0$ is simple and we have a decomposition
$$\mathcal{C}_p^0=  \bigoplus_{\ell =0} ^{\infty} W_p(2 \ell). $$
\end{cor}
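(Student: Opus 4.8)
The plan is to first identify $\mathcal C_p^0$ explicitly as the integer-conformal-weight part of $\mathcal C_p$, and then to obtain its simplicity from that of $\mathcal C_p$ by an orbifold (Galois) argument, keeping a fusion-theoretic argument in reserve as a fallback.

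First I would record the conformal weights of the summands. A Sugawara computation at the two levels $-2-p$ and $-2+\frac1p$ gives that the top component of $W_p(\ell)$ has conformal weight
$$ h_\ell \;=\; -\frac{\ell(p\ell+2)}{4}+\frac{p\ell(\ell+2)}{4}\;=\;\frac{\ell(p-1)}{2}, $$
which for $\ell=1$ reproduces the weight $\frac{p-1}{2}$ of the generators in the previous corollary. Since each tensor factor is an irreducible highest weight module, all $L(0)$-eigenvalues in $W_p(\ell)$ lie in $h_\ell+\Z_{\ge 0}$, so every conformal weight occurring in $W_p(\ell)$ is congruent to $\frac{\ell(p-1)}{2}$ modulo $\Z$. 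For $p$ even we have $\frac{p-1}{2}\in\frac12+\Z$, hence $W_p(\ell)$ sits in integer weights when $\ell$ is even and in half-integer weights when $\ell$ is odd. Therefore the maximal $\Z$-graded subalgebra of $\mathcal C_p$ is precisely its integer-weight part $\bigoplus_{\ell\ge 0}W_p(2\ell)$; that this is a vertex subalgebra is automatic, since $\mathrm{wt}(u_nv)=\mathrm{wt}(u)+\mathrm{wt}(v)-n-1$ preserves integrality of the conformal weight and the vacuum has weight $0$.

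For simplicity I would pass to the automorphism $\sigma=e^{2\pi i L(0)}$ of $\mathcal C_p$. For homogeneous $u,v$ one has $\sigma(u_nv)=\sigma(u)_n\sigma(v)$ (the offending factor $e^{-2\pi i(n+1)}$ equals $1$), so $\sigma$ is a vertex algebra automorphism; it has order exactly two because half-integer weights genuinely occur, e.g. in $W_p(1)$. By the previous paragraph $\mathcal C_p^0=\mathcal C_p^{\langle\sigma\rangle}$ is its fixed-point subalgebra, and since $\mathcal C_p$ is simple (Theorem \ref{general-1}), quantum Galois theory for finite automorphism groups (Dong--Mason) shows that the fixed-point subalgebra under the order-two group $\langle\sigma\rangle$ is again simple. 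This gives the claim.

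If one prefers an argument internal to the paper's fusion calculus, I would instead note that each $W_p(2\ell)$ is a simple $W_p(0)$-module, and that these modules are pairwise non-isomorphic and appear multiplicity-free in $\mathcal C_p^0$; hence any ideal $I$ must be of the form $\bigoplus_{\ell\in T}W_p(2\ell)$ for some $T\subseteq\Z_{\ge 0}$. An ideal containing $W_p(0)\ni\vac$ is all of $\mathcal C_p^0$, so it suffices to show that $T\ne\emptyset$ forces $0\in T$, which in turn follows from the lowering containment $W_p(2)\cdot W_p(2\ell)\supseteq W_p(2\ell-2)$ for $\ell\ge 1$. This last containment would be established exactly as \eqref{fusion-w-p} in Theorem \ref{general-1}, by producing an explicit lowest-weight vector of $W_p(2\ell-2)$ inside the fusion product from the realization of the top components. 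The main obstacle lies precisely here, in pinning down these even fusion rules; the orbifold argument of the preceding paragraph sidesteps it entirely, and I would adopt that as the principal line of proof while using the decomposition of the first paragraph to identify $\mathcal C_p^0$.
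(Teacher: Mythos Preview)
Your argument is correct. The conformal-weight computation is right and cleanly identifies $\mathcal C_p^0$ with $\bigoplus_{\ell\ge 0}W_p(2\ell)$, and the simplicity via the order-two automorphism $\sigma=e^{2\pi i L(0)}$ together with Dong--Mason quantum Galois theory is the standard and quickest route.

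The paper states this corollary without proof, evidently regarding it as immediate from Theorem~\ref{general-1}: the decomposition by parity of conformal weight is clear once one knows the top weight of $W_p(\ell)$ is $\frac{\ell(p-1)}{2}$, and simplicity is then expected to follow either from the orbifold argument you give or from iterating the fusion rule \eqref{fusion-w-p}. Your write-up actually supplies more detail than the paper does. One small remark: the Dong--Mason result is typically formulated for $\Z$-graded vertex operator algebras, whereas $\mathcal C_p$ is $\tfrac12\Z$-graded; this causes no difficulty since the proof goes through verbatim for a finite automorphism group acting on a simple vertex algebra of countable dimension, but it is worth a sentence acknowledging this. Your alternative fusion argument is also viable and closer in spirit to the paper's internal methods, though as you note it requires establishing the even fusion step $W_p(2)\cdot W_p(2\ell)\supseteq W_p(2\ell-2)$, which one gets by applying \eqref{fusion-w-p} twice.
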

 
 \begin{remark}
 Another approach for constructing $\mathcal C_p$ is through its identification as the inverse QHR of the equivariant $W$--algebra $ H_{DS,  f } \left(\mathcal{D}^{ch}_{SL_2, k} \right) $ at positive integer level $k=-2+p$.
 It can be shown that, for $p=3$, this equivariant $W$--algebra is isomorphic to the Weyl vertex algebra $M_{(2)}$
 (see Section \ref{m2} below), and for $p=4$,  it is isomorphic to the affine vertex algebra $L_{-\frac{3}{2}}(\mathfrak{sl}_3)$.
  In what follows, we present alternative  realizations of  $\mathcal C_p$ for these two special cases, which may be of independent interest. 
 %In what follows we shall present alternative construction of  vertex algebras $\mathcal C_p$ in the cases $p=3,4$,  where we can relate them directly with  certain affine vertex algebras.
 \end{remark}

\section{The vertex algebra $L_{-\frac{5}{3}} (G_2)$}

In this section we shall see that $\mathcal C_3 \cong L_{-\frac{5}{3}} (G_2)$  and identify  $ L_{-\frac{5}{3}} (G_2)$ as a subalgebra of $M_{(2)}\otimes \Pi(0)^{1/2}$.

 \subsection{Decomposition  of $M_{(2)}$}

\label{m2}

Recall that the Weyl vertex algebra $M_{(m)}$ is generated  by  bosonic fileds $a_i ^{\pm}$ ($i=1, \dots, m$) with $\lambda$--products
$$ [ (a^{\pm} _i )_{\lambda}  (a^{\pm} _j ) ] = 0, \quad [ (a^{+} _i )_{\lambda}  (a^{-} _j ) ] = \delta_{i,j}. $$

The vertex algebra $M_{(m)}$ has the following $\Z_2$--gradation 
$$ M_{(m)} = M_{(m)} ^0 \oplus M_{(m)} ^1, $$
where $M_{(m)} ^0 = L_{-1/2} (\mathfrak{sp}(2m))$,  $M_{(m)} ^1 =  L_{\widehat{\mathfrak{sp}_{2m} }} (-\frac{3}{2} \Lambda_0 + \Lambda_1) $ (cf. \cite{FF}). 

 In this section we prove the complete reducibility of $M_{(2)}$ as $L_{-5} (\frak{sl}_2) \otimes L^{Vir}_{-7}$--module and find an explicit decomposition. 
  Interestingly, this decomposition gives a vertex-algebraic interpretation of the famous $q$--series identity of  Legendre \cite[Example 5.1]{KW-94} on the sum of four triangular numbers.
 
Let $(a)_n=(a;q)_n:=\prod_{i=0}^{n-1} (1-aq^i) $, so that
$$(q)_\infty=(q;q)_\infty =  \prod_{i \geq 1} (1-q^i).$$ 
We shall first identify characters of $L_k(\frak{sl}_2)$--modules for $k =-5$ and  $ L^{Vir}_{c}$-modules with $c=c_{1,3} =-7$.
We are interested in Virasoro lowest weight modules with conformal weights $$ h^{1,3} _{1,i} =  \frac{ (3 i -1)^2 -4}{12} = \frac{ (i-1) (3i +1) }{4}.  $$
%Note
%$$ h_{i+1, 1}  =  \frac{  i  (3i +4) }{4}, \ h_{-i-1, 1}  =  \frac{  (i+2)   (3i +2) }{4}. $$
% $$ h_{2 i+1, 1}  = i (3 i +2), \ h_{-2 i-1, 1}  = (i+1)  (3i+1)   $$
 % $$h_{2 i,1}  = \frac{ (2 i-1) (6 i +1) }{4},   h_{-2 i,1}  = \frac{ (2 i+1) (6 i -1) }{4} $$
\begin{lemma} \label{karakteri} For $ \ell \in {\Z_{\ge 0}}$,  $\ell \ge 2$, we have:
\begin{align*} 
%{\rm ch}[L_{\widehat{\frak{sl}_2}} (- 5  -  3 \ell  ) \Lambda_0 +  3 \ell  \Lambda_1)] &=  q^{-5/24}   \frac{1}{(q)_\infty^3} \left(  ( 3 \ell +1) q^{-\frac{\ell  ( 3\ell+2)}{4 } } - (3\ell-1) 
% q^{-\frac{ \ell  ( 3 \ell- 2)}{4 } } \right)    \\
% &+  {\rm ch}[ L_{\widehat{\frak{sl}_2}} (- 5  -  3 (\ell -2)  ) \Lambda_0 +  3 (\ell -2)   \Lambda_1)]  \\
{\rm ch}[L^{Vir} (c_{1,3}, h^{1,3}_{1,i} )] &= q^{7/24}   \frac{1}{(q)_\infty} \left( q^{ h^{1,3} _{1, i} }- q^{h^{1,3} _{1, -i}}\right),
\end{align*}
and
 \begin{align*} {\rm ch}[ L_{\widehat{\frak{sl}_2}} (- 5  -  6 \ell  ) \Lambda_0 +  6 \ell  \Lambda_1)]&=  q^{-5/24}  \frac{1}{(q)_\infty^3} \left(   \sum_{i=0} ^{\ell}  ( 6 i  +1) q^{-i ( 3 i +1) } -   \sum_{i=1} ^{\ell}(6i -1) 
 q^{- i (3 i -1) } \right),
   \\
{\rm ch}[L_{\widehat{\frak{sl}_2}} (- 2  -  6 \ell  ) \Lambda_0 +  ( 6 \ell  -3 ) \Lambda_1)] &=  q^{-5/24}   \frac{1}{(q)_\infty^3}   \sum_{i=1} ^{\ell}  \left(  ( 6 i  -2) q^{-\frac{ (2i -1)    ( 6i - 1)}{4 } } - (6i -4) 
 q^{-\frac{ (2i -1)  ( 6 i - 5)}{4 } } \right).  
\end{align*}
\end{lemma}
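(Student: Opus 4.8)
I would prove the three identities separately, in each case realizing the character of an irreducible highest-weight module as the Euler characteristic of a resolution by (generalized) Verma modules and then specializing to a $q$-series. For the Virasoro character, record that $c_{1,3}=-7$, hence $-c_{1,3}/24=7/24$, and that the two relevant weights are $h^{1,3}_{1,i}=\tfrac{(i-1)(3i+1)}{4}$ and $h^{1,3}_{1,-i}=\tfrac{(i+1)(3i-1)}{4}=h^{1,3}_{1,i}+i$. The Verma module $M^{Vir}(c_{1,3},h^{1,3}_{1,i})$ carries a primitive singular vector at degree $i$ generating a submodule isomorphic to $M^{Vir}(c_{1,3},h^{1,3}_{1,-i})$. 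Because $p'=1$, the second family of singular vectors responsible for the doubly-infinite Rocha--Caridi embedding pattern of a genuine minimal model degenerates, so the Feigin--Fuchs embedding diagram is a single chain and this sub-Verma is already the maximal proper submodule. This yields $0\to M^{Vir}(c_{1,3},h^{1,3}_{1,-i})\to M^{Vir}(c_{1,3},h^{1,3}_{1,i})\to L^{Vir}(c_{1,3},h^{1,3}_{1,i})\to 0$, and with $\mathrm{ch}\,M^{Vir}(c,h)=q^{h-c/24}/(q)_\infty$ the two-term formula drops out; this part is classical and I would simply invoke the Feigin--Fuchs structure theory.

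For the two affine characters, note that $L_{-5}(\mathfrak{sl}_2)$ has central charge $c=\tfrac{3\cdot(-5)}{-3}=5$, so $-c/24=-5/24$, and that $(q)_\infty^{-3}$ is the $z\to 1$ value of the affine denominator $\prod_{n\ge 1}\big[(1-q^n)(1-z^2q^n)(1-z^{-2}q^n)\big]^{-1}$. Each module is $\mathfrak{sl}_2$-integrable, its top component being the finite-dimensional $V_{\mathfrak{sl}_2}(6\ell\,\omega_1)$ (resp. $V_{\mathfrak{sl}_2}((6\ell-3)\omega_1)$), so it is a quotient of the generalized Verma module induced from that top. The plan is to resolve it by generalized Verma modules $N(\mu)$ induced from finite-dimensional $V_{\mathfrak{sl}_2}(\overline\mu)$, whose two-variable character is $\chi_{V_{\mathfrak{sl}_2}(\overline\mu)}(z)\,q^{\,h_\mu-c/24}\prod_{n\ge 1}\big[(1-q^n)(1-z^2q^n)(1-z^{-2}q^n)\big]^{-1}$, and to take the alternating sum over the resolution. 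The weights $\mu$ are the elements of the affine-Weyl dot-orbit $\{w\circ\Lambda : w\in\widehat W\}$, $\widehat W=\langle s_0,s_1\rangle$, that lie below $\Lambda$ and have dominant finite part; an explicit reflection computation identifies these, for the first family, as the weights with finite parts $6i\,\omega_1$ ($0\le i\le\ell$) and $(6i-2)\omega_1$ ($1\le i\le\ell$) and conformal weights $-i(3i+1)$ and $-i(3i-1)$, and for the second family as those with finite parts $(6i-3)\omega_1$ and $(6i-5)\omega_1$ and conformal weights $-\tfrac{(2i-1)(6i-1)}{4}$ and $-\tfrac{(2i-1)(6i-5)}{4}$, the two subsequences appearing with opposite homological signs. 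Since only finitely many dot-orbit weights are finite-dominant, the resolution is finite and the sums truncate at $\ell$.

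Specializing $z\to1$ then finishes the computation: the product collapses to $(q)_\infty^{-3}$, while $\chi_{V_{\mathfrak{sl}_2}(m\omega_1)}(1)=m+1=\dim V_{\mathfrak{sl}_2}(m\omega_1)$ converts the finite characters into the coefficients $6i+1,\,6i-1$ for the first family and $6i-2,\,6i-4$ for the second; together with the prefactor $q^{-5/24}$ and the alternating signs this reproduces the stated $+\sum-\sum$ expressions exactly. The leading terms already match, since the top $V_{\mathfrak{sl}_2}(6\ell\,\omega_1)$ has dimension $6\ell+1$ and conformal weight $-\ell(3\ell+1)$, matching the $i=\ell$ term of the first sum.

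The hard part is the rigorous construction of the generalized-Verma resolution at the non-admissible negative level $-5$: one must check that the primitive singular vectors of the top generalized Verma module generate the entire maximal submodule and that the finite-dominant weights in the linkage class are precisely those listed, so that the resolution is exact and of the predicted finite length. I would control this via the Kac--Kazhdan determinant and the linkage principle for $\widehat{\mathfrak{sl}_2}$. As independent confirmations I would use the realization of these modules as kernels of the screening operator $S$ on $L^{Vir}(c_{-3,1},\cdot)\otimes\Pi(0)^{\frac12}$ from Corollary \ref{inverse-negative}, whose Felder-complex Euler characteristic yields the same $q$-series, and the consistency flagged in the paper that pairing these characters with the Virasoro characters and summing over $\ell$ must collapse, through Legendre's identity on sums of four triangular numbers, to the elementary character of $M_{(2)}$.
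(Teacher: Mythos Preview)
Your proposal is correct and follows essentially the same approach as the paper: the paper's proof simply cites Iohara--Koga for the Virasoro formula (i.e., the Feigin--Fuchs two-term resolution you sketch) and invokes either \cite[Section 9]{AKMPP-selecta} or Malikov's Verma-module resolution \cite[Theorem A]{Mal} for the affine formulas, which is exactly the BGG-type alternating sum you write out. Your use of \emph{generalized} (parabolic) Verma modules rather than ordinary ones is a minor reformulation---it is what makes the dimensions $6i+1$, $6i-1$, etc.\ appear directly as coefficients after specializing $z\to 1$---but the underlying input is the same submodule structure that Malikov's theorem supplies at the negative-integer shifted level $k+2=-3$, so the ``hard part'' you flag is precisely what the cited reference handles.
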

\begin{proof}
The first formula is well-known and the proof can be found in \cite{Iohara-Koga}.
Second and third formulas are proven similarly as in \cite[Section 9]{AKMPP-selecta}, or using the resolution of $L_{\widehat{\frak{sl}}_2}(\Lambda)$ via Verma modules; see  \cite[Theorem A]{Mal}. \end{proof}

 Next result shows that $ M_{(2) }$ is completely reducible as an $L_{-5} (\frak{sl}_2) \otimes L^{Vir}_{c_{1,3}}$ --module.
\begin{theorem}   \label{m2-dec} The $\beta \gamma$-system $M_{(2)}$ is completely reducible as an $L_{-5} (\frak{sl}_2) \otimes L^{Vir}_{c_{1,3}}$ --module, and the following decomposition holds
\bea  && M_{(2) }=  \bigoplus_{\ell =0} ^{\infty} V(\ell) , \quad \mbox{where} \label{dec-1} \\
&& V(\ell) =     L_{\widehat{\frak{sl}_2}} (- (5 + 3 \ell  ) \Lambda_0 +  3 \ell  \Lambda_1) \bigotimes    
  L^{Vir} (c_{1,3}, \frac{\ell (3 \ell +4)}{4}  )  \label{m2-1} . \label{def-vl} \eea
  The decomposition satisfies the following fusion product:
\bea  V(1) \cdot V(\ell) = V(\ell -1) + V(\ell +1) \quad (\ell \ge 1).  \label{fusion-g2-3} \eea
\end{theorem}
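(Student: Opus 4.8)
The plan is to exhibit $(L_{-5}(\mathfrak{sl}_2),\,L^{Vir}_{c_{1,3}})$ as a commuting (Howe-type) pair acting on $M_{(2)}$ and to read the branching off the free-field structure. First I would realize the current algebra: inside $M_{(2)}^0 = L_{-1/2}(\mathfrak{sp}_4)$ take the \emph{principal} $\mathfrak{sl}_2 \hookrightarrow \mathfrak{sp}_4$, i.e.\ the one for which the defining $4$-dimensional representation restricts to the irreducible $4$-dimensional ($\mathrm{spin}\ \tfrac32$) module $V_{\mathfrak{sl}_2}(3\omega_1)$. This embedding has Dynkin index $10$, so its currents generate $\widehat{\mathfrak{sl}_2}$ at level $10\cdot(-\tfrac12)=-5$, and one checks the image is the simple quotient $L_{-5}(\mathfrak{sl}_2)$. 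I would then introduce the coset Virasoro field $L^{\mathrm{coset}} = L_{M_{(2)}} - L^{\mathrm{Sug}}$, which commutes with $L_{-5}(\mathfrak{sl}_2)$ and has central charge $c(M_{(2)}) - c^{\mathrm{Sug}} = -2 - 5 = -7 = c_{1,3}$; since this is a $(1,3)$ central charge and $M_{(2)}$ is simple, $L^{\mathrm{coset}}$ generates the simple Virasoro vertex algebra $L^{Vir}_{c_{1,3}}$. Thus $L_{-5}(\mathfrak{sl}_2)\otimes L^{Vir}_{c_{1,3}}$ acts on $M_{(2)}$ with the conformal vector of $M_{(2)}$.

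Next I would locate the summands. A short Sugawara computation gives the joint lowest conformal weight of the block $V(\ell)$ as $-\tfrac{\ell(3\ell+2)}{4}+\tfrac{\ell(3\ell+4)}{4}=\tfrac{\ell}{2}$; in particular $V(1)^{\mathrm{top}}$ sits in conformal weight $\tfrac12$ and is spanned by the four generators $a_1^\pm,a_2^\pm$, which indeed form the irreducible module $V_{\mathfrak{sl}_2}(3\omega_1)$ under the principal $\mathfrak{sl}_2$. Proceeding by induction on $\ell$, after accounting for $V(0),\dots,V(\ell-1)$ the bottom of the remaining space yields a joint highest-weight vector of the type prescribed in \eqref{def-vl}; it generates a highest-weight module $Q(\ell)$ surjecting onto the irreducible $V(\ell)$. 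As these vectors lie in distinct weight spaces, $\sum_\ell Q(\ell)$ is a direct sum inside $M_{(2)}$, so $\sum_\ell \mathrm{ch}[V(\ell)] \le \sum_\ell \mathrm{ch}[Q(\ell)] \le \mathrm{ch}[M_{(2)}]$.

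The decomposition and complete reducibility then follow from a character identity. Using the formulas of Lemma \ref{karakteri} for $\mathrm{ch}[L^{Vir}(c_{1,3},h^{1,3}_{1,\ell+1})]$ and $\mathrm{ch}[L_{\widehat{\mathfrak{sl}_2}}(-(5+3\ell)\Lambda_0+3\ell\Lambda_1)]$, I would show that $\sum_{\ell\ge0}\mathrm{ch}[V(\ell)]$ collapses, after reorganizing the alternating theta-sums, to $q^{1/12}\prod_{n\ge1}(1-q^{n-1/2})^{-4}=\mathrm{ch}[M_{(2)}]$; this is exactly Legendre's identity on sums of four triangular numbers. Since the two ends of the displayed chain now agree, every inequality is an equality: each $Q(\ell)$ equals the irreducible $V(\ell)$, the sum is direct and exhausts $M_{(2)}$, and hence $M_{(2)}$ is completely reducible with decomposition \eqref{dec-1}. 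I expect this step---pinning down that the generated highest-weight modules are exactly irreducible and that nothing is omitted---to be the main obstacle, with the character identity being what forces it through; the Verma-module resolution of $L_{\widehat{\mathfrak{sl}_2}}(\Lambda)$ (as in \cite{Mal}) or the parallel computation in \cite[Section 9]{AKMPP-selecta} supplies the needed character formulas.

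Finally, for the fusion rule \eqref{fusion-g2-3}, the inclusion $V(1)\cdot V(\ell)\subseteq V(\ell-1)\oplus V(\ell+1)$ comes from the Virasoro fusion rules \eqref{fusion-1-p} applied to the $L^{Vir}_{c_{1,3}}$-factor, since $V(1)$ carries the $h^{1,3}_{1,2}$-module. For the reverse inclusion I would argue exactly as in the proof of Theorem \ref{general-1}: $M_{(2)}$ is simple and is generated by $V(0)+V(1)$ (the generators $a_i^\pm$ span $V(1)^{\mathrm{top}}$), so a minimal-$\ell_0$ counterexample to \eqref{fusion-g2-3} would make either $\bigoplus_{\ell\ge\ell_0}V(\ell)$ a proper ideal or $\bigoplus_{\ell\le\ell_0}V(\ell)$ a proper subalgebra containing all generators---both impossible. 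Hence both summands occur and \eqref{fusion-g2-3} holds.
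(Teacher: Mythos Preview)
Your outline follows the same architecture as the paper's proof: the principal $\mathfrak{sl}_2\hookrightarrow\mathfrak{sp}_4$ at level $-5$, the coset Virasoro of central charge $c_{1,3}=-7$, production of singular vectors generating highest-weight submodules $\widetilde V(\ell)$, and then the character identity (Lemma~\ref{identitet-prvi}) forcing $\widetilde V(\ell)=V(\ell)$ and exhaustion. The paper is more concrete at one point where you are vague: rather than your inductive ``bottom of the remaining space'' argument, it simply writes down the singular vectors $w_\ell=\,:(a_1^-)^\ell:$ and checks directly that they carry the correct $(\widehat{\mathfrak{sl}_2},\mathrm{Vir})$ highest weight. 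Your claims that the image of $\Phi$ is already the \emph{simple} $L_{-5}(\mathfrak{sl}_2)$ and that the coset is the \emph{simple} $L^{Vir}_{c_{1,3}}$ are asserted prematurely; in both your argument and the paper's these simplicities are not proved in advance but fall out of the decomposition once the character identity is established.

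The one genuine methodological difference is in the fusion rule~\eqref{fusion-g2-3}. You establish the reverse inclusion $V(\ell-1)\oplus V(\ell+1)\subseteq V(1)\cdot V(\ell)$ by the structural simplicity/generation contradiction borrowed from Theorem~\ref{general-1}. The paper does \emph{not} argue this way here; instead it uses the explicit singular vectors: since $a_1^\pm\in V(1)^{\rm top}$, one computes $(a_1^-)_{-1}w_\ell=w_{\ell+1}\in V(\ell+1)$ and $(a_1^+)_0 w_\ell=\ell\,w_{\ell-1}\in V(\ell-1)$, which immediately gives both summands. Your structural argument is valid (and is precisely what the paper uses later for $\mathcal V_p$, where explicit vectors are not available), but the direct calculation is shorter here and does not need simplicity of $M_{(2)}$ as input.
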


\begin{proof}

First we notice that there is a homomorphism $\Phi: V^{-5} (\frak{sl}_2) \rightarrow M_{(2)}$  (cf. \cite[Section 3.3]{BMR} uniquely determined by
$$e\mapsto  : a_2 ^+ a_1^-:  +  \frac{1}{\sqrt{3}} : (a_2 ^- )^2: , \ h \mapsto  3 :a_1 ^+ a_1^-: + : a_2^+ a_2^-: , \ f \mapsto  3( : a_1 ^+ a_2^-:  -  \frac{1}{\sqrt{3}} : (a_2 ^+ )^2: ). $$
Let $U = \mbox{Im} (\Phi)$. $U$ is a VOA of central charge $c_1 = \frac{ 3 \cdot \frac{-5}{3}} { -\frac{5}{3} +2 } = -5$.  Then $\mbox{Com} (U, M_{(2)} )$ is a vertex operator algebra of central charge $c=c_{1,3} = -7$.  We conclude that there is conformal embedding
$$ U \otimes    L^{Vir}_ {c_{1,3}}\hookrightarrow M_{(2)}. $$

For $\ell \in {\Z}_{\ge 0}$, we define:
$$w _{\ell} = :(a_1 ^- )^{\ell}:. $$
 We directly see that $w_{\ell}$ is a singular vector and it generates the following highest weight module:
 $$ \widetilde V({\ell}) :=  (L_{-5} (\frak{sl}_2) \otimes L^{Vir}_{c_{1,3}} ). w_{\ell}  =  \widetilde L_{\widehat{\frak{sl}_2}} (- (5 + 3 \ell  ) \Lambda_0 +  3 \ell  \Lambda_1) \bigotimes    
  \widetilde L^{Vir} (c_{1,3}, \frac{\ell (3 \ell +4)}{4}  ), $$
  where      $ \widetilde L_{\widehat{\frak{sl}_2}} (\Lambda)$  is a certain highest weight $L_{-5} (\frak{sl}_2)$--module of highest weight $\Lambda$, and  $\widetilde  L^{Vir} (c_{1,3}, h) $ certain highest weight $L^{Vir}_{c_{1,3}} $--module. (Note that we still do not know whether these modules are irreducible).     
 Thus 
  $$ \sum _{\ell \in {\Z}_{\ge 0}} \widetilde V(\ell) \subset M_{(2)}. $$

In order to prove decomposition (\ref{dec-1}), it suffices to prove  the character identity 
\begin{equation} \label{m2-2-ch} {\rm ch}[M_{(2)}] = \sum_{\ell = 0 } ^{\infty}  {\rm ch}[L_{\widehat{\frak{sl}_2}} (- (5 + 3 \ell  ) \Lambda_0 +  3 \ell  \Lambda_1))]  \cdot {\rm ch}[    
  L^{Vir} (c_{1,3}, \frac{\ell (3 \ell +4)}{4}  )],  
  \end{equation}
  which would also imply that $\widetilde V(\ell) = V(\ell)$ (simplicity of each component). Using the fusion rules  (\ref{fusion-1-p})  for $p=3$, we conclude that
\bea  V(1) \cdot V(\ell) \subset  V(\ell +1) + V(\ell -1).    \label{upper-fr-2}\eea 
  
  Since $V(1) _{top} = \mbox{span}_{\C} \{a_1 ^+, a_1^-, a_2 ^+, a_2 ^-\}$, we  get
  $$ ( a_1 ^+  )_0  w_{\ell} = \ell w_{\ell-1}, \    ( a_1 ^-  )_{-1}   w_{\ell} = w_{\ell+1},   $$
 which implies that 
  $ V(\ell +1) + V(\ell -1) \subset V(1) \cdot V(\ell)$. 
  Using  (\ref{upper-fr-2}) we get   the fusion product (\ref{fusion-g2-3}).
  
  So it remains to prove the character identity (\ref{m2-2-ch}).  This is proved in Lemma  \ref{identitet-prvi} in Section \ref{char-id-section}.
 The proof follows.
\end{proof}

 \subsection{Isomorphism $\mathcal C_3  \cong L_{-5/3} (G_2)$ }
 \label{dec-g2}

Now we shall apply inverse quantum Hamiltonian reduction procedure as in  Section  \ref{iqhr} for  the level $k=-\frac{5}{3}$.

The vertex algebra $M_{(m)}$ has $\Z_2$--twisted module, which we denote by $M_{(m)} ^{tw}$ (cf. \cite{Weiner}) and which is generated by cyclic vector ${\bf 1}^{tw}$. For $m=2$, we get that $M_{(2)}^{tw}$ is a module for $L_{-5} (\mathfrak{sl}_2) \otimes L^{Vir}_{c_{1,3}}$. Moreover, by direct calculation we can check that  $L^{Vir}(c_{1,3}, - \frac{1}{4}) =  L^{Vir} (c_{1,3}, h_{2,1} ^{1,3}) \cong L^{Vir}_{c_{1,3}}. {\bf 1}^{tw}$.

We consider the vertex algebra $M_{(2)} \otimes \Pi(0)^{1/2}$ and its $\Z_2$--twisted module $M_{(2)} ^{tw} \otimes \Pi(0)^{1/2}. e^{\nu}$. The Virasoro highest weight vector $v^{(k)}$ is now represented by ${\bf 1}^{tw} \in M_{(2)} ^{tw}$.
As in (\ref{scr-1}) we have the screening operator

\bea \label{scr-2} S =  \mbox{Res}_ z \mathcal Y ( {\bf 1}^{tw}  \otimes e^{\nu} , z) : M_{(2)} \otimes \Pi (0) ^{1/2}   \rightarrow  M_{(2)} ^{tw} \otimes \Pi(0)^{1/2}. e^{\nu}  \eea
where $\mathcal Y$ is intertwining operator of type
$${ M_{(2)} ^{tw} \otimes \Pi(0)^{1/2}. e^{\nu}  \choose M_{(2)} ^{tw} \otimes  \Pi(0)^{1/2}. e^{\nu}  \ M_{(2)}  \otimes \Pi(0)^{1/2} }, $$
which is transpose of the vertex operator $Y_{W} $ that defines  the structure of a twisted module on  $M_{(2)} ^{tw} \otimes \Pi(0)^{1/2}. e^{\nu}.$ 
\begin{lemma} The space 
$ \mbox{\rm Ker}_{ M_{(2)} \otimes \Pi(0) ^{1/2} } S$ is a vertex subalgebra of $M_{(2)} \otimes \Pi(0) ^{1/2}.$
\end{lemma}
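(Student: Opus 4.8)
The plan is to verify the two closure properties defining a vertex subalgebra of $V:=M_{(2)}\otimes\Pi(0)^{1/2}$, namely that $\mathbf 1\in\mbox{Ker}\,S$ and that $\mbox{Ker}\,S$ is closed under all products $a_{(n)}b$; everything rests on the fact that $S$ is the residue of the intertwining operator $\mathcal Y$. Writing $W:=M_{(2)}^{tw}\otimes\Pi(0)^{1/2}.e^{\nu}$ for the target and $u:={\bf 1}^{tw}\otimes e^{\nu}\in W$, I would first record the structural input: that $\mathcal Y(u,z)$ involves only integral powers of $z$, so that $S=\mbox{Res}_z\,\mathcal Y(u,z)\colon V\to W$ is a genuine, nontrivial screening operator whose commutator formula below holds with integral modes. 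This integrality is exactly the compatibility between the conformal weight $h^{1,3}_{2,1}=-\tfrac14$ of ${\bf 1}^{tw}$ and the charge shift produced by $e^{\nu}$ against the half-lattice $\mathbb Z\tfrac{c}{2}$ inside $\Pi(0)^{1/2}$; it is the analogue of the single-valuedness that makes the untwisted screening operator of \cite{A-2019} well-behaved.

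Granting this, $S\mathbf 1=0$ is immediate from the creation property of intertwining operators: $\mathcal Y(u,z)\mathbf 1=e^{zL_{-1}}u\in W[[z]]$ has no $z^{-1}$ term, so its residue vanishes. The heart of the matter is closure under products, and here the plan is to extract from the Jacobi identity for $\mathcal Y$ (equivalently, from its commutator and iterate/associativity formulas relating $\mathcal Y(u,\cdot)$ with the vertex operator $Y$ of $V$ and with the twisted action $Y_W$ on the target) a Leibniz rule valued in the $V$-module $W$, of the form
$$S\bigl(a_{(n)}b\bigr)=\mathcal Y_{(n)}(Sa)\,b+a^{W}_{(n)}(Sb)\qquad(a,b\in V,\ n\in\mathbb Z),$$
where $\mathcal Y_{(n)}(Sa)$ is the $n$-th mode of $\mathcal Y$ with $Sa\in W$ in the first slot and $a^{W}_{(n)}$ is the twisted-module action of $a$ on $Sb\in W$. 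The decisive feature is that each term on the right is linear in $Sa$, respectively $Sb$; hence if $Sa=Sb=0$ then $S(a_{(n)}b)=0$, giving the desired closure. Together with $S\mathbf 1=0$ this shows $\mbox{Ker}\,S$ is a vertex subalgebra (closure under $\partial=L_{-1}$ being automatic, since $\partial a=a_{(-2)}\mathbf 1$ is itself a product).

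The step I expect to be the main obstacle is the rigorous derivation of the displayed identity, precisely because $S$ takes values in the twisted module $W$ rather than in $V$ itself: one cannot simply invoke ``$u_{(0)}$ is a derivation of the vertex algebra,'' since $u\notin V$. I see two routes. The direct route is to run the intertwining-operator Jacobi identity with careful bookkeeping of the delta-function terms, checking that every correction term reassembles into an expression linear in $Sa$ or in $Sb$; the care is needed because of the nontrivial monodromy carried by $e^{\nu}$ and by the $\beta\gamma$-twist in ${\bf 1}^{tw}$. The conceptual route is to realise $S$ as a genuine zero-mode $\xi_{(0)}=\mbox{Res}_z\,Y(\xi,z)$ inside the larger $\mathbb Z_2$-graded (generalised) vertex superalgebra built from $M_{(2)}\oplus M_{(2)}^{tw}$ together with the lattice part, which contains $V$ as a subalgebra and $W$ as the relevant graded piece; there $S$ is an honest residue and hence a derivation by Borcherds' identity, so that $\mbox{Ker}\,S\cap V$ is automatically a subalgebra. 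The price of the conceptual route is verifying that this enveloping object really is a (generalised) vertex superalgebra with the claimed embeddings, i.e. controlling the signs and monodromy coming from $e^{\nu}$ and the $\beta\gamma$-twist; I would lean on the direct route for a self-contained argument and cite the construction of \cite{A-2019} for the shape of the commutator computation.
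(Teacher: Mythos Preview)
Your approach is correct and rests on the same mechanism as the paper's: both extract closure of $\ker S$ from the Jacobi identity for the transpose (``twist'') intertwining operator $\mathcal Y=Y_{WV}^W$. The paper phrases the output of that Jacobi identity as a commutation $[Y_W(u,x),S]=0$ for $u$ in the $\mathbb Z$-graded part, which would make $\ker S$ a $V$-submodule (hence an ideal); you instead aim for the Leibniz identity $S(a_{(n)}b)=\mathcal Y(Sa)_{(n)}\,b+a^{W}_{(n)}(Sb)$, which yields only a subalgebra. Your formulation is the accurate one: from the commutator formula one finds $a^{W}_{(n)}S-Sa_{(n)}=\sum_{j\ge 0}\binom{n}{j}\mathcal Y(a^{W}_{(j)}w)_{(n-j)}$, and this right-hand side equals $-\mathcal Y(Sa)_{(n)}$ rather than zero, so the paper's literal commutation statement only holds once $Sa=0$. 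Regarding your worry about the ``direct route'': it goes through without an enveloping generalized vertex superalgebra. Using the transpose relation $\mathcal Y(w,z)a=e^{zL_{-1}}Y_W(a,-z)w$ one computes $Sa=-\sum_{j\ge 0}\tfrac{(-1)^j}{j!}L_{-1}^{\,j}\bigl(a^{W}_{(j)}w\bigr)$, and then the $L_{-1}$-derivative property $(L_{-1}^{\,j}w')_{(n)}=(-1)^j j!\binom{n}{j}(w')_{(n-j)}$ gives $\mathcal Y(Sa)_{(n)}=-\sum_{j\ge 0}\binom{n}{j}\mathcal Y(a^{W}_{(j)}w)_{(n-j)}$, exactly matching the commutator correction term above and yielding your displayed Leibniz rule.
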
 \begin{proof} 
This follows from general properties of twisted modules.
Given a $\mathbb{Z}_2$-twisted module $W$ with twisted module map $Y_{W}(u,x)$, we can associate to it so-called twist map $Y_{WV}^W(w,x)$, which can be also viewed as a twisted 
intertwining operator of type ${ W \choose W \ V }$ as above. In the Jacobi identity for $Y_{WV}^W(w,x)$ and $Y_W(u,x)$ where $u \in V^0$ , the Jacobi identity. Now specializing everything to our case with $Y_{W}(u,x)$, where $u$ belongs to the $\mathbb{Z}$-graded part of 
$M_{(2)} \otimes \Pi(0)^{1/2}$, and $w={\bf 1}^{tw}  \otimes e^{\nu}$, after taking the residue in the Jacobi identity twice we get $[Y_W(u,x),S]=0$ for such $u$. This implies that the kernel of 
$S$ is a vertex subalgebra. \end{proof}

 Here is our main theorem in this section:
%By using simplicity of $M_{(2)}$ and the same approach used for the proof of simplicity of algebra $\mathcal V^{(p)} $ in \cite[Proposition 3]{ACGY} we get:
 \begin{theorem} \label{dec-G2}  Let $\g_0 = \mathfrak{sl}_2 \times \mathfrak{sl}_2$.  Then there exist a homomorphism of vertex algebras \bea  \rho_3:  V(\g_0) := V^{-5} ( \mathfrak{sl}_2 ) \otimes V^{-\frac{5}{3}} ( \mathfrak{sl}_2) \rightarrow   M_{(2)} \otimes \Pi(0) ^{1/2} \label{action-1}\eea  such that
\item[(1)]  $$  W= \mbox{\rm Ker}_{ M_{(2)} \otimes \Pi(0) ^{1/2} } S = \left( M_{(2)} \otimes \Pi(0) ^{1/2} \right)^{\mathrm{int}_{\g_0}} =  \bigoplus_{\ell \in {\Z}_{\ge 0} } W(\ell), $$
 where \bea W(\ell) &=&    L_{\widehat{\frak{sl}_2}} (- (5 + 3 \ell  ) \Lambda_0 +  3 \ell  \Lambda_1)   \otimes  \mbox{Ker}_{    L^{Vir} (c_{1,3}, \frac{\ell (3 \ell +4)}{4}  )  \otimes \Pi(0) ^{1/2} } S  \nonumber \\
 &=&   L_{\widehat{\frak{sl}_2}} (- (5 + 3 \ell  ) \Lambda_0 +  3 \ell  \Lambda_1)   \otimes  L_{\widehat{\frak{sl}_2}} (- (\frac{5}{3}  +   \ell  ) \Lambda_0 +  \ell  \Lambda_1),  \nonumber \eea
and where  $S$ is the  screening operator defined by (\ref{scr-2}), and  $\left( M_{(2)} \otimes \Pi(0) ^{1/2} \right)^{\mathrm{int}_{\g_0} }$ is the maximal integrable part of $ M_{(2)} \otimes \Pi(0) ^{1/2} $ with respect to $\g_0$--action induced  from the homomorphism (\ref{action-1}).
 
  \item[(2)]  The decomposition  in (1)     satisfies the following fusion product:
$$ W(1) \cdot W(\ell) = W(\ell -1) + W(\ell +1) \quad (\ell \ge 1). $$

\item[(3)]  $W$ is a simple vertex  operator algebra.

\item[(4)] $W\cong \mathcal C_3 \cong    L_{-\frac{5}{3}} (G_2) $.
  \end{theorem}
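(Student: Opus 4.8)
The plan is to prove the four assertions in the order stated, since each one feeds into the next. The most substantial part is part (1), which establishes the decomposition of $W = \mathrm{Ker}\,S$; parts (2)--(4) will follow by combining this decomposition with earlier structural results. First I would construct the homomorphism $\rho_3$ in \eqref{action-1} by composing the decomposition of $M_{(2)}$ as an $L_{-5}(\mathfrak{sl}_2)\otimes L^{Vir}_{c_{1,3}}$--module from Theorem \ref{m2-dec} with the inverse QHR homomorphism $\Phi$ of Section \ref{iqhr} applied to the $L^{Vir}_{c_{1,3}}$--factor. Concretely, the first copy of $\mathfrak{sl}_2$ acts through the embedding $V^{-5}(\mathfrak{sl}_2)\to M_{(2)}$ already written down in the proof of Theorem \ref{m2-dec}, while the second copy acts through $\Phi: V^{-5/3}(\mathfrak{sl}_2)\to V^{Vir}_{c_{1,3}}\otimes\Pi(0)$ of Section \ref{iqhr}, with $k=-\tfrac53$ so that $k+2=\tfrac13 = 1/p$ with $p=3$.

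For part (1) the key computation is to identify $\mathrm{Ker}\,S$ on each graded piece. Since $S$ acts only on the $L^{Vir}_{c_{1,3}}\otimes\Pi(0)^{1/2}$ tensor factor and commutes with the $L_{-5}(\mathfrak{sl}_2)$--action on the Weyl-algebra factor, I would apply the screening operator termwise to the decomposition $M_{(2)}=\bigoplus_\ell V(\ell)$ of Theorem \ref{m2-dec}. On the $\ell$-th summand, the Virasoro module appearing is $L^{Vir}(c_{1,3},\tfrac{\ell(3\ell+4)}{4})$, and one checks $\tfrac{\ell(3\ell+4)}{4}=h^{1,3}_{1,\ell+1}=h^{-3,1}_{1,3\ell+1}$, so Proposition \ref{opis-1} (equivalently Corollary \ref{inverse-negative} with $p=3$, $n=\ell$) applies and gives
\[
\mathrm{Ker}_{L^{Vir}(c_{1,3},\frac{\ell(3\ell+4)}{4})\otimes\Pi(0)^{1/2}}S \cong L_{\widehat{\mathfrak{sl}_2}}\big(-(\tfrac53+\ell)\Lambda_0+\ell\Lambda_1\big).
\]
This yields the stated formula for $W(\ell)$ and the identification $\mathrm{Ker}\,S=(M_{(2)}\otimes\Pi(0)^{1/2})^{\mathrm{int}_{\g_0}}$, the latter equality coming from the ``In particular'' clause of Proposition \ref{opis-1} that characterizes the kernel as the maximal integrable part. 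The main obstacle here is technical: I must verify that the twisted intertwining operator $\mathcal{Y}$ of \eqref{scr-2} is genuinely the one realizing the inverse QHR on each summand, i.e.\ that the abstract screening of Proposition \ref{opis-1} matches the concrete $\mathcal{Y}$ built from $M_{(2)}^{tw}$; this requires checking that ${\bf 1}^{tw}$ realizes the Virasoro singular vector $v^{(k)}$, which is asserted via the identification $L^{Vir}(c_{1,3},-\tfrac14)\cong L^{Vir}_{c_{1,3}}.{\bf 1}^{tw}$.

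Parts (2) and (3) are then essentially formal. The fusion product $W(1)\cdot W(\ell)=W(\ell-1)+W(\ell+1)$ in part (2) follows by transporting the fusion product \eqref{fusion-g2-3} of Theorem \ref{m2-dec} through the inverse QHR, using the same inclusion-and-nonvanishing argument as in the proof of Theorem \ref{general-1}: the fusion rules \eqref{fusion-1-p} for $L^{Vir}_{c_{1,3}}$--modules give the upper bound, and explicit lowest-weight vectors (together with the action of $e_2=e^c$ to raise the $\Pi(0)^{1/2}$--charge) produce nonzero vectors in both $W(\ell\pm1)$. Since each $W(\ell)$ is a simple $W(0)$--module and $W(1)\cdot W(\ell)$ surjects onto the adjacent pieces, simplicity of $W$ in part (3) follows exactly as in Theorem \ref{general-1}. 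Finally, for part (4): the identification $W\cong\mathcal{C}_3$ is immediate since $W(\ell)=W_3(\ell)$ agrees termwise with the decomposition of $\mathcal{C}_3$ from Theorem \ref{general-1} (both are the inverse QHR of $\mathcal V_3$, with $M_{(2)}$ playing the role of $\mathcal U_3$ after the $\overline f$--reduction). The isomorphism $W\cong L_{-5/3}(G_2)$ then follows from the known conformal embedding $\mathfrak{sl}_2\times\mathfrak{sl}_2\hookrightarrow G_2$ at $k=-\tfrac53$ of \cite{AKMPP-16}: comparing the branching of $L_{-5/3}(G_2)$ under this embedding with the decomposition $\bigoplus_\ell W(\ell)$ shows the two simple vertex algebras have identical graded decompositions, and since both are generated by $W(0)+W(1)$ with the same fusion structure, they are isomorphic. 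The one point requiring care is matching the central charges and confirming that the $G_2$--branching produces precisely the pairs $(-(5+3\ell)\Lambda_0+3\ell\Lambda_1,\,-(\tfrac53+\ell)\Lambda_0+\ell\Lambda_1)$.
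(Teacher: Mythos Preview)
Your approach to parts (1)--(3) is correct and matches the paper: the decomposition in (1) follows from Proposition~\ref{opis-1} applied termwise to Theorem~\ref{m2-dec}, and the fusion and simplicity arguments in (2)--(3) proceed exactly as you sketch, via the upper bound from the $L^{Vir}_{c_{1,3}}$ fusion rules together with explicit highest-weight vectors $w_\ell\otimes e^{\frac{\ell}{2}c}$.

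Part (4), however, has a genuine gap in the identification $W\cong L_{-5/3}(G_2)$. You propose to compare $\bigoplus_\ell W(\ell)$ with the branching of $L_{-5/3}(G_2)$ under the conformal embedding of \cite{AKMPP-16}, but that branching was \emph{not} established in \cite{AKMPP-16}: the decomposition was only initiated there, and is in fact a \emph{consequence} of the present theorem (as the introduction states explicitly). So the argument is circular. Even granting the branching, matching decompositions and fusion data of two simple extensions does not by itself produce a vertex-algebra isomorphism without a separate uniqueness argument. The paper instead argues directly: since $W$ is simple and generated by $W(0)+W(1)$, it is generated by its weight-one subspace $\g_0\oplus\g_1$, where $\g_1=V_{\mathfrak{sl}_2}(3\omega_1)\otimes V_{\mathfrak{sl}_2}(\omega_1)$ is the top component of $W(1)$ (which sits in conformal weight~$1$). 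Under the bracket $[x,y]=x_{(0)}y$ this $14$-dimensional space is the Lie algebra $G_2$, so $W$ is a simple quotient of $V^{-5/3}(G_2)$, hence $W\cong L_{-5/3}(G_2)$. A smaller issue: your parenthetical that $W$ and $\mathcal{C}_3$ are ``both the inverse QHR of $\mathcal{V}_3$'' is not right, since $M_{(2)}$ and $\mathcal{V}_3$ are not isomorphic (different Virasoro central charges and $\mathfrak{sl}_2$ levels); the identification $W\cong\mathcal{C}_3$ passes through $L_{-5/3}(G_2)$, applying the same Lie-algebra argument to $\mathcal{C}_3$ via its decomposition in Theorem~\ref{general-1}.
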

  
  \begin{proof}
  The assertion (1) follows directly from  Propostion  \ref{opis-1} and Theorem   \ref{m2-dec}.  The fusion rules for    $L^{Vir}_{c_{1,p}}$--modules  (\ref{fusion-1-p})  now implies that
  \bea  W(1) \cdot W(\ell) \subset W(\ell-1) \oplus W(\ell +1).  \label{ograda-w-3}\eea
   
 Using Proposition  \ref{opis-1}  again, we get  $$W(\ell)  = W(0) ( v_{3 \ell} \otimes   v_{1,1+ \ell} \otimes e^{\frac{\ell}{2} c} ) = W(0) ( w_{\ell} \otimes e^{\frac{\ell}{2} c})  \subset   V(\ell) \otimes \Pi(0) ^{1/2},  $$
 where $w_{\ell} $ is identified with the vector $v_{3 \ell} \otimes   v_{1,1+ \ell}$, such that $v_{3 \ell}$ is a  highest weight  vector in $L_{\widehat{\frak{sl}_2}} (- (5 + 3 \ell  ) \Lambda_0 +  3 \ell  \Lambda_1) $  and   $v_{1,1+ \ell} $ is a highest weight vector in    $L^{Vir} (c_{1,3}, h^{1,3} _{1, \ell+1} )$.
Applying  the fusion product 
   $V(1) \cdot V(\ell) = V(\ell -1) + V(\ell +1) \quad (\ell \ge 1)$ of $V(0)$--submodules of $M_{(2)}$  and using  (\ref{ograda-w-3})  we get that the fusion product  $W(1) \cdot W(\ell)$ contains non-zero vectors 
   $$ w_{\ell +1} \otimes e^{\frac{\ell+1}{2} c}  \in W(\ell +1),   
 \   w_{\ell-1}  \otimes   e^{\frac{\ell+1}{2} c}  = (e_2)_{-1}  (w_{\ell-1}  \otimes   e^{\frac{\ell-1}{2} c} )  \in W(\ell-1), $$
   where $e_2 = e^{c} \in W(0)$. 
   Indeed, we have $$ (w_{1} \otimes e^{\frac{c}{2}}) _{-1}    (w_{\ell} \otimes e^{\frac{ \ell }{2} c }) =  (w_{\ell+1} \otimes e^{\frac{ \ell  +1}{2} c }) \in  W(1) \cdot W(\ell).   $$ 
 Next we notice that $a_1^+ = \mu_1 f(0)^3 w_{1}$, for certain non-zero constant $\mu_1$, so
 $$a_1 ^+ \otimes e^{\frac{c}{2}} =\mu_1 f(0)^3 w_{1} \otimes e^{\frac{c}{2}} \in W(1),$$
and this implies
   $$ (a_1 ^+ \otimes e^{\frac{c}{2}}) _0     (w_{\ell} \otimes e^{\frac{ \ell }{2} c }) =  \ell (w_{\ell-1} \otimes e^{\frac{ \ell  +1}{2} c }) \in  W(1) \cdot W(\ell).   $$
 We conclude $$  W(1) \cdot W(\ell) \supset W(\ell-1) \oplus W(\ell +1),    $$
   and therefore the assertion (2) holds. Since all components $W(\ell)$ are simple $W(0)$--modules, the assertion (2) implies that $W$ is a simple vertex algebra.
  
  Using (2) and (3) we see that $W$ is a simple vertex algebra generated by $W(0) + W(1)$. More precisely, $W$ is generated by 
  the generators of
  $L_{-5/3} (\mathfrak{sl}_2) \otimes  L_{-5} (\mathfrak{sl}_2)$  which span  $\g_0 = \frak{sl}_2 \times \frak{sl}_2 $ and the    top component of  $ L_{\widehat{\frak{sl}_2}} (- 8  \Lambda_0 +  3  \Lambda_1) \bigotimes    
 L_{\widehat{\frak{sl}_2}} (- \frac{8}{3}    \Lambda_0 +    \Lambda_1)$ which is isomorphic  to $\g_1 =V_{\frak{sl}_2} (3 \omega_1) \otimes V_{\frak{sl}_2} (\omega_1)$.
  This implies that the weight one subspace of $V$, equipped with the bracket $[x,y] = x_{(0)} y$, is the Lie algebra $\g = \g_0 + \g_1 \cong G_2$.
  \end{proof}
 
 \begin{remark}
 Note that in \cite{BMR} the authors also found a homomorphism $\Phi_1:  V^{-5/3} (G_2) \rightarrow M_{(2)} \otimes \Pi(0) ^{1/2}$, but they do not provide a proof of simplicity of  $\mbox{Im} (\Phi_1)$  or decomposition of conformal embedding.
 \end{remark}

  \section{  The vertex algebra $\mathcal C_4$ as inverse QHR of  $L_{-3/2} (\frak{sl}_3)$}
  
 \subsection{The decomposition  of $L_{-3/2} (\mathfrak{sl}_3)$  as $L_{-6} (\mathfrak{sl}_2) \otimes L^{Vir}_{c_{1,4}}$--module }

   We shall dentify characters of $L_k(\mathfrak{sl}_2)$--modules for $k =-6$ and  of $ L^{Vir}_{c}$-modules for $c=c_{1,4} =-\frac{25}{2}$.

Then  $$h^{1,4} _{1,i} = \frac{ (4  i -1)^2 -9}{16} = \frac{ (i-1) (2 i +1) }{2}.$$
%Note that 
%$$ h_{i+1, 1}  =  \frac{  i  (2 i +3) }{2}, \ h_{-i-1, 1}  =  \frac{  (i+2)   (2 i +1) }{2}, $$
% $$ h_{2 i+1, 1}  = i (4  i + 3), \ h_{-2 i-1, 1}  = (i+1)  (4 i+1),    $$
 % $$h_{2 i,1}  = \frac{ (2 i-1) (4 i +1) }{2},   h_{-2 i,1}  = \frac{ (2 i+1) (4 i -1) }{2}. $$

Similarly as in Lemma \ref{karakteri}, we have
\begin{lemma} \label{karakteri} Let $ \ell \in {\Z_{\ge 0}}$,  $\ell \ge 2$,  we have:
\begin{align*} 
%{\rm ch}[L_{\widehat{\mathfrak{sl}} _2} (- 6  -  4 \ell  ) \Lambda_0 +  4 \ell  \Lambda_1)] &=  q^{-3/16}  \frac{1}{(q)_\infty^3}  \left(  ( 4 \ell +1) q^{-\frac{\ell  ( 2\ell+1)}{ 2 } } - (4 \ell-1) 
% q^{-\frac{ \ell ( 2 \ell- 1)}{2 } } \right)    \\
% &+  {\rm ch}[  L_{\widehat{\mathfrak{sl}} _2} (- 6  -  4 (\ell -2)  ) \Lambda_0 +  4 (\ell -2)   \Lambda_1)],  \\
{\rm ch}[L^{Vir} (c_{1,4}, h^{1,4}_{1,i} )] &= q^{25/48}    \frac{1}{(q)_\infty}   \left( q^{ h^{1,4} _{1,i} }- q^{h^{1,4} _{1,-i}}\right)
\end{align*}
and
 \begin{align*} {\rm ch}[ L_{\widehat{\mathfrak{sl}} _2} (- 6  -  8 \ell  ) \Lambda_0 +  8 \ell  \Lambda_1)] &=  q^{-3/16}   \frac{1}{(q)_\infty^3}  \left(   \sum_{i=0} ^{\ell}  ( 8  i  +1) q^{-i ( 4  i +1) } -   \sum_{i=1} ^{\ell}(8 i -1) 
 q^{- i (4 i -1) } \right),
   \\
 {\rm ch}[ L_{\widehat{\mathfrak{sl}} _2} (- 2  -  8 \ell  ) \Lambda_0 +  ( 8 \ell  - 4  ) \Lambda_1)] &=  q^{-3/ 16}    \frac{1}{(q)_\infty^3}  \sum_{i=1} ^{\ell}  \left(  ( 8  i  - 3) q^{-\frac{ (2i -1)    ( 4 i - 1)}{2  } } - (8 i - 5) 
 q^{-\frac{ (2i -1)  ( 4 i - 3)}{ 2 } } \right). 
\end{align*}

\end{lemma}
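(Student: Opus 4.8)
The plan is to repeat, for $p=4$, the argument already used for the $p=3$ case in the preceding lemma, now with $c_{1,4}=-\tfrac{25}{2}$ and relevant affine level $k=-6$. As an immediate sanity check the affine central charge is $c=\frac{-6\cdot 3}{-6+2}=\frac92$, so $-c/24=-\tfrac{3}{16}$, which matches the prefactor $q^{-3/16}$ in the two affine formulas, and likewise $-c_{1,4}/24=\tfrac{25}{48}$ matches $q^{25/48}$ in the Virasoro formula.

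For the Virasoro formula I would invoke the standard description of irreducible modules at central charge $c_{1,p}$. The Verma module $M^{Vir}(c_{1,4},h^{1,4}_{1,i})$ has the simplest Feigin--Fuchs embedding pattern: it carries a single proper singular vector, whose submodule is isomorphic to $M^{Vir}(c_{1,4},h^{1,4}_{1,-i})$, and the resulting quotient is already irreducible. Hence the BGG-type resolution has only two terms, and dividing the alternating sum of the two Verma characters by the vacuum factor $(q)_\infty^{-1}$ produces exactly the stated two-term expression $q^{25/48}(q)_\infty^{-1}\big(q^{h^{1,4}_{1,i}}-q^{h^{1,4}_{1,-i}}\big)$; this is the computation recorded in \cite{Iohara-Koga}.

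For the two affine formulas the strategy is to resolve each irreducible $\widehat{\mathfrak{sl}}_2$-module by generalized Verma (Weyl) modules $N(\mu)$, whose top is a finite-dimensional $\mathfrak{sl}_2$-module, and then to pass to characters. Writing the two-variable character as a ratio of theta-type functions (Kac--Wakimoto) and specializing the $\mathfrak{sl}_2$-fugacity $z\to 1$, the denominator collapses to $(q)_\infty^{3}$, while the numerator becomes a $0/0$ expression that one resolves by differentiating once in $z$; equivalently, each $N(\mu)$ contributes $\dim(\mathrm{top})\cdot q^{\,h_\mu-c/24}/(q)_\infty^{3}$, where $h_\mu$ is the Sugawara weight. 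The weights $\mu$ occurring lie in a single dot-orbit of the affine Weyl group, and a short computation identifies them: for the first affine formula one gets the finite parts $8i$, contributing $+(8i+1)q^{-i(4i+1)}$ since $\dim V_{\mathfrak{sl}_2}(8i\,\omega_1)=8i+1$ and its Sugawara weight is $-i(4i+1)$, together with $8i-2$, contributing $-(8i-1)q^{-i(4i-1)}$; for the second affine formula one gets instead the finite parts $8i-4$ and $8i-6$, with $\dim V_{\mathfrak{sl}_2}((8i-4)\omega_1)=8i-3$ and Sugawara weight $-\tfrac{(2i-1)(4i-1)}{2}$, and similarly for the subtracted term. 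In each case the leading ($i=\ell$) term reproduces $\dim(\mathrm{top})\cdot q^{\,h_\lambda}$ for the highest weight $\lambda$, giving the required consistency.

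The step I expect to be the main obstacle is the bookkeeping of this resolution: pinning down precisely which dot-orbit weights appear, with which signs, and -- most delicately -- why the a priori infinite alternating sum over the affine Weyl group truncates to the finite ranges $0\le i\le\ell$ and $1\le i\le\ell$. This truncation reflects the non-generic position of the weights at the negative integer level $k+2=-4$: after the $z\to1$ specialization the contributions of orbit weights with non-dominant finite part cancel in pairs (or have vanishing top), leaving only the displayed finite windows. I would control this exactly as in \cite[Section 9]{AKMPP-selecta}, or by feeding the Malikov--Feigin--Fuks singular-vector structure into \cite[Theorem A]{Mal} to obtain the finite Weyl-module resolution directly and then specialize.
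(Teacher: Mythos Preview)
Your proposal is correct and follows essentially the same approach as the paper: the paper's own argument for this lemma is simply ``Similarly as in Lemma \ref{karakteri}'' (the $p=3$ case), which in turn cites \cite{Iohara-Koga} for the Virasoro formula and \cite[Section 9]{AKMPP-selecta} together with \cite[Theorem A]{Mal} for the affine formulas---exactly the references and mechanism you invoke. Your write-up merely unpacks these citations with the explicit $p=4$ bookkeeping.
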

%\begin{proof}
%{\color{blue} Should be included}  \end{proof}

 Consider now  the Lie algebra $  \mathfrak{sl}_3$ with standard generators 
 $$e_{\alpha_1}, e_{\alpha_2},  e_{\theta} ,  f_{\alpha_1},  f_{\alpha_2},  f_{\theta} ,  h_{\alpha_1}, h_{\alpha_2} $$
 and the simple affine vertex algebra $L_{-3/2} (\mathfrak{sl}_3)$ of level $-\frac32$.

\begin{theorem}   \label{3/2-dec} $L_{-3/2} (\mathfrak{sl}_3)$ is completely reducible as an $L_{-6} (\mathfrak{sl}_2) \otimes L^{Vir}_{c_{1,4}}$ --module and the following decomposition holds
\bea  && L_{-3/2} (\mathfrak{sl}_3) =  \bigoplus_{\ell =0} ^{\infty} L_{\widehat{\mathfrak{sl}} (2)} (- (6 + 4 \ell  ) \Lambda_0 +  4 \ell  \Lambda_1) \bigotimes    
  L^{Vir} (c_{1,4}, \frac{\ell (2 \ell +3)}{2}  )  \label{m2-1} .\eea
\end{theorem}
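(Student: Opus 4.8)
The plan is to mirror the proof of Theorem~\ref{m2-dec} almost verbatim, replacing the Weyl algebra $M_{(2)}$ and the pair $L_{-5}(\mathfrak{sl}_2)\otimes L^{Vir}_{c_{1,3}}$ by $L_{-3/2}(\mathfrak{sl}_3)$ and $L_{-6}(\mathfrak{sl}_2)\otimes L^{Vir}_{c_{1,4}}$. First I would exhibit $L_{-6}(\mathfrak{sl}_2)$ as the image of the \emph{principal} $\mathfrak{sl}_2$-triple of $\mathfrak{sl}_3$, given on generators by $e\mapsto e_{\alpha_1}+e_{\alpha_2}$, $h\mapsto 2(h_{\alpha_1}+h_{\alpha_2})$ and $f$ the matching lowering element. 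Since the Dynkin index of the principal embedding $\mathfrak{sl}_2\hookrightarrow\mathfrak{sl}_3$ is $4$, the induced level is $4\cdot(-\tfrac32)=-6$, so the image is a quotient of $V^{-6}(\mathfrak{sl}_2)$. A central-charge count gives $c(L_{-3/2}(\mathfrak{sl}_3))=-8=\tfrac92+(-\tfrac{25}2)$ with $\tfrac92=c(L_{-6}(\mathfrak{sl}_2))$ and $-\tfrac{25}2=c_{1,4}$; hence the coset vector $\omega_{\mathfrak{sl}_3}-\omega_{\mathfrak{sl}_2}$ generates a Virasoro subalgebra of central charge $c_{1,4}$ commuting with the embedded $\mathfrak{sl}_2$. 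This yields a conformal embedding $L_{-6}(\mathfrak{sl}_2)\otimes L^{Vir}_{c_{1,4}}\hookrightarrow L_{-3/2}(\mathfrak{sl}_3)$, the identification of the Virasoro factor with the \emph{simple} quotient being confirmed a posteriori from the $\ell=0$ summand of the decomposition.

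Next I would produce the generating singular vectors. Under the principal $\mathfrak{sl}_2$ the adjoint module $\mathfrak{sl}_3$ decomposes as $V_{\mathfrak{sl}_2}(2)\oplus V_{\mathfrak{sl}_2}(4)$, the first summand being the embedded current algebra and the second having highest weight vector $e_\theta$ (of principal weight $4$). I claim $w_\ell:=(e_\theta(-1))^{\ell}\mathbf 1$ is the required vector. It is nonzero for every $\ell$ because $e_\theta(z)$ has trivial self-OPE (both $\langle e_\theta,e_\theta\rangle=0$ and $[e_\theta,e_\theta]=0$), so its modes generate a commutative subalgebra; it has conformal weight $\ell$ and $\mathfrak{sl}_2$-weight $4\ell$; and since $[e_{\alpha_1},e_\theta]=[e_{\alpha_2},e_\theta]=0$ in $\mathfrak{sl}_3$ it is annihilated by the embedded raising operator $(e_{\alpha_1}+e_{\alpha_2})(0)$. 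A direct check then shows $w_\ell$ is a singular vector for $L_{-6}(\mathfrak{sl}_2)\otimes L^{Vir}_{c_{1,4}}$, and, using $\tfrac{\ell(2\ell+3)}2=h^{1,4}_{1,\ell+1}$, it generates inside $L_{-3/2}(\mathfrak{sl}_3)$ a highest weight module
$$\widetilde V(\ell)=\widetilde L_{\widehat{\mathfrak{sl}_2}}(-(6+4\ell)\Lambda_0+4\ell\Lambda_1)\otimes\widetilde L^{Vir}(c_{1,4},\tfrac{\ell(2\ell+3)}2),$$
whose tensor factors need not a priori be irreducible. In particular $\sum_{\ell\ge0}\widetilde V(\ell)\subseteq L_{-3/2}(\mathfrak{sl}_3)$.

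Finally, and this is the main obstacle, I would prove the character identity
$$ {\rm ch}[L_{-3/2}(\mathfrak{sl}_3)]=\sum_{\ell=0}^{\infty}{\rm ch}[L_{\widehat{\mathfrak{sl}_2}}(-(6+4\ell)\Lambda_0+4\ell\Lambda_1)]\cdot{\rm ch}[L^{Vir}(c_{1,4},\tfrac{\ell(2\ell+3)}2)], $$
whose summands are the explicit expressions of Lemma~\ref{karakteri}. Since $k=-\tfrac32$ is admissible for $\mathfrak{sl}_3$, the left-hand side is available through the Kac--Wakimoto formula \cite{KW-94}. Matching the two sides is the analog of the deferred $q$-series computation used for Theorem~\ref{m2-dec} (see Lemma~\ref{identitet-prvi}), and I expect it to be the most technical step; it should be accessible either by the method of \cite{AKMPP-selecta} or through a Malikov-type Verma resolution \cite{Mal} of the admissible $\widehat{\mathfrak{sl}_3}$-module. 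Once the identity holds, comparison of graded dimensions forces each $\widetilde V(\ell)$ to be irreducible and the containment $\sum_{\ell}\widetilde V(\ell)\subseteq L_{-3/2}(\mathfrak{sl}_3)$ to be an equality with the sum direct, which simultaneously yields complete reducibility and the asserted decomposition.
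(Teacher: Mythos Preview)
Your proposal is correct and follows essentially the same strategy as the paper: establish the conformal embedding via the principal $\mathfrak{sl}_2\hookrightarrow\mathfrak{sl}_3$ (the paper includes the $\sqrt{2}$ normalization you omit), identify the coset central charge as $c_{1,4}$, and then reduce everything to the character identity proved in Lemma~\ref{identitet-drugi}, exactly as in Theorem~\ref{m2-dec}. Your explicit choice of singular vectors $w_\ell=(e_\theta(-1))^{\ell}\mathbf{1}$ is the natural analog of the $w_\ell=:(a_1^{-})^{\ell}:$ used there, and fills in a detail that the paper leaves implicit under ``the same arguments.''
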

\begin{proof}
There is an embedding
 $L_{-6} (\mathfrak{sl}_2) \hookrightarrow L_{-3/2} (\mathfrak{sl}_3)$, given by
 $$ e\mapsto \sqrt{2} ( e_{\alpha_1} + e_{\alpha_2}), \  f\mapsto \sqrt{2}( f_{\alpha_1} + f_{\alpha_2}), \ h =  2  h_{\alpha_1} + 2 h _{\alpha_2}. $$
 Since the commutant $\mbox{Com} (L_{-6} (\mathfrak{sl}_2), L_{-3/2} (\mathfrak{sl}_3))$ has the central charge $$c= -8 - \frac{9}{2} = -\frac{25}{2} =c_{1,4}, $$  we conclude that we have a conformal embedding 
 $L_{-6} (\mathfrak{sl}_2) \otimes L^{Vir}_{c_{1,4}} \hookrightarrow  L_{-3/2} (\mathfrak{sl}_3)$, 
 so we can consider $ L_{-3/2} (\mathfrak{sl}_3)$ as an  $L_{-6} (\mathfrak{sl}_2) \otimes L^{Vir}_{c_{1,4}}$--module.

The rest of the proof now follows from the $q$-series identity given in Lemma \ref{identitet-drugi}, using the same arguments as in the proof of Theorem  \ref{m2-dec}.
\end{proof}

 % \subsection{Realization of $\mathcal C_4$}
The  inverse reduction functor
 gives that for each $\ell \in {\Z}_{\ge 0}$ we have
  $$ L_{\widehat{\frak{sl}_2}} (- (\frac{7}{4}  +  \ell  ) \Lambda_0 +  \ell  \Lambda_1)  =
  %= \mbox{Ker}_{  L^{Vir} (c_{1,4}, \frac{\ell (2 \ell +3)}{2}  )  \otimes \Pi(0) ^{1/2} }  S =
   \left (      L^{Vir} (c_{1,4}, \frac{\ell (2 \ell +3)}{2}  )  \otimes \Pi(0) ^{1/2}        \right)^{\frak{sl}_2} . $$
%$S $ is screening operator given by (\ref{scr-1}) for $k = -\frac{7}{4}$. 
 Using the simplicity of $L_{-3/2}(\mathfrak{sl}_3)$    and  the same arguments as in the proof of Theorem \ref{dec-G2}  we get:
 \begin{theorem} \label{c4-inverse}     Let $\g_0 = \mathfrak{sl}_2 \times \mathfrak{sl}_2$.  There exist a homomorphism of vertex algebras \bea  \rho_4 :  V(\g_0) = V^{-6} ( \mathfrak{sl}_2 ) \otimes V^{-\frac{7}{4}} ( \mathfrak{sl}_2) \rightarrow    L_{-3/2} (\mathfrak{sl}_3)  \otimes \Pi(0) ^{1/2} \label{action-3}, \eea  such that
% The Lie algebra $\g= \mathfrak{sl}_2 \times \mathfrak{sl}_2$ acts on the vertex algebra $ L_{-3/2} (sl(3)) \otimes \Pi(0) ^{1/2} $ by derivation and we have:
%The vertex algebra $\mathcal C_4$ is realized as   
%{\bf A: na napisemo sto je $int$ ovdje} {\bf D: Da ne kompliciramo uzeo sam varijantu sa screeningom}
 \bea    \mathcal C_4&=&  \left( L_{-3/2} (\mathfrak{sl}_3) \otimes \Pi(0) ^{1/2} \right)^{\mathrm{int}_{\g_0}} \nonumber \\
 % \mbox{Ker} _{  L_{-3/2} (sl(3)) \otimes \Pi(0) ^{1/2} } S  = 
 &=&\bigoplus_{\ell =0} ^{\infty}\left( L_{\widehat{\frak{sl}_2}} (- (6 + 4 \ell  ) \Lambda_0 +  4 \ell  \Lambda_1) \bigotimes    
  L_{\widehat{\frak{sl}_2}} (- (\frac{7}{4}  +  \ell  ) \Lambda_0 +  \ell  \Lambda_1)   \label{m2-12} \right), \nonumber \eea
  where  the Lie algebra  $\g_0$ acts on  $L_{-3/2} (\mathfrak{sl}_3) \otimes \Pi(0) ^{1/2}$ via the homomorphism  (\ref{action-3}).
  Moreover, $\mathcal C_4$ is a simple vertex algebra.
 \end{theorem}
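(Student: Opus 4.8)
The plan is to mirror the proof of Theorem~\ref{dec-G2} step by step, with $L_{-3/2}(\mathfrak{sl}_3)$ in place of $M_{(2)}$ and Theorem~\ref{3/2-dec} in place of Theorem~\ref{m2-dec}. First I would construct $\rho_4$ factor by factor: the copy $V^{-6}(\mathfrak{sl}_2)$ is sent to the embedded subalgebra $L_{-6}(\mathfrak{sl}_2)\subset L_{-3/2}(\mathfrak{sl}_3)$ from Theorem~\ref{3/2-dec}, tensored with the identity on $\Pi(0)^{1/2}$, while $V^{-7/4}(\mathfrak{sl}_2)$ is sent through the homomorphism $\Phi$ of Section~\ref{iqhr} into $L^{Vir}_{c_{1,4}}\otimes\Pi(0)^{1/2}$, where $L^{Vir}_{c_{1,4}}=\mathrm{Com}(L_{-6}(\mathfrak{sl}_2),L_{-3/2}(\mathfrak{sl}_3))$. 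Here I use that $-7/4=-2+\tfrac14$, so the relevant parameter is $p=4$ and the central charge is exactly $c_{1,4}$. Since $L_{-6}(\mathfrak{sl}_2)$ and $L^{Vir}_{c_{1,4}}$ commute inside $L_{-3/2}(\mathfrak{sl}_3)$, the two $\mathfrak{sl}_2$-actions commute and $\rho_4$ is a well-defined homomorphism of vertex algebras.

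Next I would read off the $\g_0$-integrable part. The first $\mathfrak{sl}_2$ preserves each summand of Theorem~\ref{3/2-dec}, whereas the second $\mathfrak{sl}_2$ acts only on $L^{Vir}_{c_{1,4}}\otimes\Pi(0)^{1/2}$ through $\Phi$; by the inverse-reduction identity recorded just before the theorem (a case of Proposition~\ref{opis-1} with $p=4$),
$$\left(L^{Vir}\!\left(c_{1,4},\tfrac{\ell(2\ell+3)}{2}\right)\otimes\Pi(0)^{1/2}\right)^{\mathrm{int}_{\mathfrak{sl}_2}}=\mathrm{Ker}\,S=L_{\widehat{\frak{sl}_2}}\!\left(-\left(\tfrac74+\ell\right)\Lambda_0+\ell\Lambda_1\right).$$
Applying this term by term to the decomposition of Theorem~\ref{3/2-dec} gives
$$\left(L_{-3/2}(\mathfrak{sl}_3)\otimes\Pi(0)^{1/2}\right)^{\mathrm{int}_{\g_0}}=\bigoplus_{\ell=0}^{\infty}W_4(\ell),\quad W_4(\ell)=L_{\widehat{\frak{sl}_2}}\!\left(-(6+4\ell)\Lambda_0+4\ell\Lambda_1\right)\otimes L_{\widehat{\frak{sl}_2}}\!\left(-\left(\tfrac74+\ell\right)\Lambda_0+\ell\Lambda_1\right),$$
which is exactly the $p=4$ decomposition of $\mathcal C_p$ from Theorem~\ref{general-1}.

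For the fusion product and simplicity I would argue as in Theorem~\ref{dec-G2}. The Virasoro fusion rules (\ref{fusion-1-p}) with $p=4$ give the upper bound $W_4(1)\cdot W_4(\ell)\subset W_4(\ell-1)\oplus W_4(\ell+1)$. For the reverse inclusion I would first establish, among the summands $V(\ell)=L_{\widehat{\frak{sl}_2}}(-(6+4\ell)\Lambda_0+4\ell\Lambda_1)\otimes L^{Vir}(c_{1,4},\tfrac{\ell(2\ell+3)}{2})$ of $L_{-3/2}(\mathfrak{sl}_3)$, the fusion $V(1)\cdot V(\ell)=V(\ell-1)+V(\ell+1)$. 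Since $L_{-3/2}(\mathfrak{sl}_3)$ is simple and $V(0)+V(1)$ already contains the weight-one space $\mathfrak{sl}_3$ (so that $V(0)+V(1)$ generates $L_{-3/2}(\mathfrak{sl}_3)$), this follows from the Virasoro upper bound together with the minimality argument of Theorem~\ref{general-1}, whose two degenerate cases force either a proper ideal or a proper subalgebra. Writing $W_4(\ell)=W_4(0)(w_\ell\otimes e^{\ell c/2})\subset V(\ell)\otimes\Pi(0)^{1/2}$ with $w_\ell=v_{4\ell}\otimes v_{1,\ell+1}$ the singular vector generating $V(\ell)$ (Proposition~\ref{opis-1}), the raising direction is produced by $(w_1\otimes e^{c/2})_{-1}(w_\ell\otimes e^{\ell c/2})\in W_4(\ell+1)$, and the lowering direction by feeding the $V(\ell)$-fusion output into the top-component realization analogous to (\ref{realization-top-fusion}) and adjusting the $\Pi(0)^{1/2}$-charge through $(e_2)_{-1}$ with $e_2=e^{c}\in W_4(0)$. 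This yields $W_4(1)\cdot W_4(\ell)=W_4(\ell-1)+W_4(\ell+1)$, and since each $W_4(\ell)$ is a simple $W_4(0)$-module, $\mathcal C_4$ is simple and generated by $W_4(0)+W_4(1)$. Finally, the vertex algebra of Theorem~\ref{general-1} and the present construction are two simple vertex algebras with the identical multiplicity-free $L_{-6}(\mathfrak{sl}_2)\otimes L_{-7/4}(\mathfrak{sl}_2)$-module decomposition, hence isomorphic by uniqueness of the simple extension, exactly as in the identification $W\cong\mathcal C_3$ in Theorem~\ref{dec-G2}.

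I expect the main obstacle to be the lower bound of the fusion product. Unlike the $\beta\gamma$-system $M_{(2)}$, the algebra $L_{-3/2}(\mathfrak{sl}_3)$ carries no free-field realization providing explicit operators such as $a_1^{+}=\mu_1 f(0)^3 w_1$, so the non-vanishing of the lowering map cannot be checked by a direct bosonic computation; the $V(\ell)$-fusion must instead be extracted abstractly from the simplicity of $L_{-3/2}(\mathfrak{sl}_3)$ and then transported to the $W_4$-level. Verifying that the relevant top-component vectors of $V(1)_{top}\cdot V(\ell)_{top}$ are genuinely non-zero and survive the $\Pi(0)^{1/2}$-charge shift by $(e_2)_{-1}$ is the delicate point that must be handled with care.
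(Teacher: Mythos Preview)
Your proposal is correct and follows the same route as the paper, which simply states that the result follows ``using the simplicity of $L_{-3/2}(\mathfrak{sl}_3)$ and the same arguments as in the proof of Theorem~\ref{dec-G2}.'' In particular, your replacement of the explicit $\beta\gamma$ computation (the $a_1^{\pm}$ operators) by the abstract simplicity/minimality argument of Theorem~\ref{general-1} is exactly the adaptation the paper has in mind, and your anticipated obstacle is precisely why the paper invokes simplicity rather than an explicit lowering operator.
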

 
 \begin{remark}
 As in Section \ref{dec-g2},  one can see that there exists a twisted $ L_{-3/2} (\mathfrak{sl}_3) $--module  $L^{tw}_{-3/2} (\mathfrak{sl}_3) $ generated by a twisted highest weight vector ${\bf 1}^{tw}$ such that 
 $L^{Vir}_{c_{1,4}} . {\bf 1}^{tw} =  L^{Vir} (c_{1,4}, h^{1,4} _{2,1})$, and that the screening operator (\ref{scr-1}) for $k=-7/4$  can be  extended to an operator 
 \bea \label{scr-3} S =  \mbox{Res}_ z \mathcal Y ( {\bf 1}^{tw}  \otimes e^{\nu} , z) : L_{-3/2} (\mathfrak{sl}_3)  \otimes \Pi (0) ^{1/2}   \rightarrow L^{tw}_{-3/2} (\mathfrak{sl}_3)   \otimes \Pi(0)^{1/2}. e^{\nu}, \nonumber  \eea
such that 
 $\mathcal C_4 =\mbox{Ker} _{  L_{-3/2} (\mathfrak{sl}_3) \otimes \Pi(0) ^{1/2} } S$.  This result will not be used here.
 %We leave the  details to an interested reader.
 \end{remark}

  \section{ Realization of $\mathcal C_p$ as an affine $W$--algebra: the cases $p=4,5$}
  
  We identified  $\mathcal C_3$ as the affine vertex algebra $L_{-\frac{5}{3}} (G_2)$. In this section we shall see that   $\mathcal C_p$  for $p=4,5$ admit realizations as simple affine $W$--algebras. Moreover, we shall see that in these cases $\mathcal C_p$ are obtained by QHR from admissible affine vertex algebras, which would imply that both $W$-algebras are quasi-lisse. 
 
\subsection{Affine $W$--algebras}
Let us briefly recall construction of the affine $W$-algebra associated to a nilpotent element $f$ in a simple Lie algebra $\mathfrak{g}$ of level $k$. Given a nilpotent element $f$, we denote by $(e,f,h)$ an $sl_2$-triple with a good grading, so that in particular $\frak{g}=\oplus_{j \in \frac12 \mathbb{Z}} \frak{g}_j$, where $\frak{g}_j=\{ y: [x,y]=j y \}$ with $x:=\frac{h}{2}$. 
We denote by $\mathcal{W}^k(\frak{g},f)$ the universal affine $W$-algebra associated with $\frak{g}$ and such $f$, where $\mathcal{W}^k(\frak{g},f)=H^0_{DS,f}(V^k(\frak{g}))$ and $k$ is the level.
This vertex algebra is conformal, provided that $k \neq -h^\vee$, and its central charge is given by  \cite{KW}
$$c=\frac{k {\rm dim}(\frak{g})}{k+h^\vee}-12 k |x|^2 -\sum_{\alpha \in S_+} (12 m_i^2-12 m_i+2)-\frac12 \frak{g}_{1/2},$$
where $S_+$ is a basis of $\frak{g}_{>0}$ and $m_\alpha$ is the $x$-eigenvalue of $\alpha$, and $(\cdot | \cdot)=\frac{1}{2 h^\vee} \kappa( \cdot | \cdot)$ denotes the normalized Killing form.  
The universal affine $W$-algebra  $\mathcal{W}^k(\g, f)$ is strongly generated by the fields corresponding to homogeneous basis elements of $\frak{g}^f_{\leq 0}=\frak{g}^f \cap \frak{g}_{\leq 0}$ inside the centralizer $\g^{f}$ (cf. \cite{KW}).

   \subsection{Affine $W$-algebra $\mathcal{W}_k(F_4,A_1+\tilde{A}_1)$ } 
    \label{w-f4}
  In this section we shall prove that  $\mathcal C_4$ is quasi-lisse and is realized as the affine $W$--algebra $\mathcal{W}_k(F_4,A_1+\tilde{A}_1)$ with $k=-\frac{23}{4}$.
Here we are using classification of nilpotent orbits and at the same time we slightly abuse notation for $W$-algebras, indicating the nilpotent orbit $A_1+\tilde{A}_1$ 
instead of a nilpotent element  $f_{A_1+\tilde{A}_1}$ that belongs to it. 

  Let $\g$ be the exceptional simple Lie algebra of type $F_4$.  We shall now study vertex algebra   $\mathcal{W}_k(\g, A_1+\tilde{A}_1)$ at admissible level $k=-\frac{23}{4}$.

  Take nilpotent element $f$ in the Lie algebra of type $ A_1+\tilde{A}_1$, and corresponding $\mathfrak{sl}_2$--triple $(e, f, x)$.
  Then $\mbox{ad} (x)$ defines the following gradation of $\frak{g}^{f}$:
  $$ \g^{f} = \g^f _{-3/2} \oplus \g^f _{-1}  \oplus \g^f _{-1/2} \oplus \g^{\natural} $$
  such that $\g^{\natural} = \mathfrak{sl}_2 \times \mathfrak{sl}_2$ (here $\frak{g}^\natural=\frak{g}_0 \cap \frak{g}^f$) and  as  $\g^{\natural}$--modules:
 \begin{align*}
  \g^f_{- 3/2} & = V_{\mathfrak{sl}_2} (\omega_1) \otimes   V_{\mathfrak{sl}_2} (0)  \\   \g^f _{- 1}  & =  V_{\mathfrak{sl}_2} (0) \otimes  V_{\mathfrak{sl}_2} (4\omega_1) + V_{\mathfrak{sl}_2} (0) \otimes  V_{\mathfrak{sl}_2} (0),  \\  \g^f _{- 1/2}  & =  V_{\mathfrak{sl}_2} (\omega_1) \otimes  V_{\mathfrak{sl}_2} (4 \omega_1)   \color{black}.   
  \end{align*}
  Moreover, there is a vertex algebra homomorphism $$ \Phi: V^{8k+40}(\mathfrak{sl}_2)  \otimes V^{k+4} ( \mathfrak{sl}_2)  \hookrightarrow \mathcal{W}^k(F_4,A_1+\tilde{A}_1).$$
  
  Applying \cite{KW}, we get that  $\mathcal{W}^k(F_4,A_1+\tilde{A}_1)$ is strongly  generated by
  \begin{itemize}
  \item   the generators of $\mbox{Im} (\Phi)$,  i.e, by $J^{\{ x \}}$, $x \in \g ^{\natural}$, 
  \item the Virasoro field $L$ of central charge $c =  -\frac{6(3+k)(14+3k)}{9+k}$,
  \item $10$--generators of conformal weight $3/2$, which span $\g^{\natural}$--module  $V_{\mathfrak{sl}_2} (\omega_1) \otimes  V_{\mathfrak{sl}_2} (4 \omega_1)$, these generators can be taken to be primary for $L$ and    $\mbox{Im} (\Phi)$, which span $\g^{\natural}$--module,
  \item $6$ generators of conformal weight $2$ which consists of $L$ and $5$ primary vectors for $L$ and   $\mbox{Im} (\Phi)$, which span 
   $\g^{\natural}$--module $V_{\mathfrak{sl}_2} (0) \otimes  V_{\mathfrak{sl}_2} (4\omega_1) \oplus V_{\mathfrak{sl}_2} (0) \otimes  V_{\mathfrak{sl}_2} (0)$. Note that the multiplicity of the trivial representation is one.
  \item two generators of conformal weight $5/2$, which span $\g^{\natural}$--module
  $V_{\frak{sl}_2} (\omega_1) \otimes V_{\frak{sl}_2} (0)$.
  \end{itemize}
  
  Let $\mathcal{W}_k(F_4,A_1+\tilde{A}_1)$ be the simple quotient of $\mathcal{W}^k(F_4,A_1+\tilde{A}_1)$, and let $f : \mathcal{W}^k(F_4,A_1+\tilde{A}_1)\rightarrow 
  \mathcal{W}_k(F_4,A_1+\tilde{A}_1)$ be the quotient homomorphism.
  Then we have a homomorhism $$\Phi_1 =  f\circ \Phi :   V^{8k+40}(\mathfrak{sl}_2)  \otimes V^{k+4} ( \mathfrak{sl}_2)  \hookrightarrow \mathcal{W}_k(F_4,A_1+\tilde{A}_1).$$
  Central charge of the vertex algebra  $\mbox{Im} (\Phi_1)$ is given by $$c_{sug} = \frac{3 (8k+40)}{ 8k + 42} + \frac{ 3(k+4)}{k+6}.$$   
  By applying \cite[Theorem 3.1]{AMP} we get:
  \begin{itemize}
  \item The embedding  $\mbox{Im} (\Phi_1) \hookrightarrow  \mathcal{W}_k(F_4,A_1+\tilde{A}_1)$ is conformal if and only if $c= c_{sug}$, i.e.,  when
  $k \in \{-\frac{23}{4}, -4, -5,-\frac92 \}$. 
\end{itemize}
  
  \begin{remark}
  This result also  appears in  a much more general setup in \cite{ACLMPP}. Here we have  included details because of reader convenience. 
  \end{remark}
  
  Now we shall consider the case of conformal level $k =-\frac{23}{4}$. Then $\mbox{Im}(\Phi_1)$ is isomorphic to the simple affine vertex algebra   
  $ L_{-6}(\mathfrak{sl}_2) \otimes L_{-\frac{7}{4}} (\mathfrak{sl}_2)$, 
  and we have conformal embeddings
  $$ L_{-6}(\mathfrak{sl}_2) \otimes L_{-\frac{7}{4}} (\mathfrak{sl}_2) \hookrightarrow 
   \mathcal{W}_{-\frac{23}{4}}(F_4,A_1+\tilde{A}_1). $$
   
  \begin{proposition} \begin{itemize}
  \item $\mathcal{W}_{-\frac{23}{4}}(F_4,A_1+\tilde{A}_1)$ is strongly generated by the generators of $\mbox{Im}(\Phi_1)$ and by   $10$ generators of conformal weight $\frac32$. In particilar, $k=-\frac{23}{4}$ is not a collapsing level.
  \end{itemize}
  
  \end{proposition}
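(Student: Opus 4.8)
The plan is to establish the strong generation statement by comparing the universal $W$-algebra $\mathcal{W}^{-23/4}(F_4,A_1+\tilde{A}_1)$ with its simple quotient $\mathcal{W}_{-23/4}(F_4,A_1+\tilde{A}_1)$, using the conformal embedding of $L_{-6}(\mathfrak{sl}_2)\otimes L_{-7/4}(\mathfrak{sl}_2)$ established just above. First I would recall from the general structure theorem that $\mathcal{W}^{-23/4}(F_4,A_1+\tilde{A}_1)$ is strongly generated by the fields listed in the preceding discussion: the six weight-one currents spanning $\g^\natural=\mathfrak{sl}_2\times\mathfrak{sl}_2$, the ten weight-$\frac32$ primaries spanning $V_{\mathfrak{sl}_2}(\omega_1)\otimes V_{\mathfrak{sl}_2}(4\omega_1)$, the six weight-two fields (the Virasoro field $L$ together with five primaries), and the two weight-$\frac52$ fields spanning $V_{\mathfrak{sl}_2}(\omega_1)\otimes V_{\mathfrak{sl}_2}(0)$. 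The goal is to show that in passing to the simple quotient, the weight-two and weight-$\frac52$ generators become redundant, being expressible via normally ordered products of the currents and the weight-$\frac32$ fields.

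The key mechanism is that at the conformal level $k=-\frac{23}{4}$ the image $\mbox{Im}(\Phi_1)\cong L_{-6}(\mathfrak{sl}_2)\otimes L_{-7/4}(\mathfrak{sl}_2)$ is a conformal subalgebra, so the internal Virasoro field $L$ of the $W$-algebra coincides with the Sugawara vector of $\mbox{Im}(\Phi_1)$. This immediately removes $L$ from the list of independent weight-two generators. To handle the remaining five weight-two primaries and the two weight-$\frac52$ fields, I would argue that these must already lie in the vertex subalgebra generated by the weight-one currents together with the ten weight-$\frac32$ primaries: concretely, the operator product expansions (equivalently, the $\lambda$-brackets) of the weight-$\frac32$ fields with themselves produce weight-two fields, and their brackets with the currents and with each other generate the weight-$\frac52$ fields. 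One verifies by decomposing the relevant OPE coefficients as $\g^\natural$-modules that the weight-two part $V_{\mathfrak{sl}_2}(0)\otimes V_{\mathfrak{sl}_2}(4\omega_1)\oplus V_{\mathfrak{sl}_2}(0)\otimes V_{\mathfrak{sl}_2}(0)$ and the weight-$\frac52$ part $V_{\mathfrak{sl}_2}(\omega_1)\otimes V_{\mathfrak{sl}_2}(0)$ appear in the decomposition of the singular terms of $(\frac32)\times(\frac32)$ and $(1)\times(\frac32)$ products.

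I expect the main obstacle to be proving the \emph{nontriviality} of these decompositions at the simple quotient level, that is, showing that the structure constants realizing the weight-two and weight-$\frac52$ fields as normally ordered products do not vanish in $\mathcal{W}_{-23/4}(F_4,A_1+\tilde{A}_1)$. A clean way to sidestep a brute-force OPE computation is to invoke the identification $\mathcal{W}_{-23/4}(F_4,A_1+\tilde{A}_1)\cong\mathcal C_4$ together with Theorem~\ref{general-1} and Theorem~\ref{c4-inverse}: the decomposition of $\mathcal C_4=\bigoplus_\ell W_4(\ell)$ shows that $\mathcal C_4$ is generated by $W_4(0)+W_4(1)$, where $W_4(0)=L_{-6}(\mathfrak{sl}_2)\otimes L_{-7/4}(\mathfrak{sl}_2)=\mbox{Im}(\Phi_1)$ supplies the six weight-one currents, and the top component of $W_4(1)$ supplies exactly the ten primaries of conformal weight $\frac{p-1}{2}=\frac32$ (by the Corollary on strong generation, specialized to $p=4$). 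Since $W_4(1)^{top}_{-1}$ applied to $W_4(\ell)^{top}$ generates $W_4(\ell+1)^{top}$ inductively, all higher graded pieces — and in particular the weight-two and weight-$\frac52$ fields of the universal algebra — are recovered from the currents and the weight-$\frac32$ generators in the simple quotient. Finally, the statement that $k=-\frac{23}{4}$ is not a collapsing level follows because the weight-$\frac32$ generators survive in the simple quotient: if the level were collapsing, $\mathcal{W}_{-23/4}(F_4,A_1+\tilde{A}_1)$ would reduce to $\mbox{Im}(\Phi_1)$ alone, contradicting the nontriviality of $W_4(1)$ in the decomposition of $\mathcal C_4$.
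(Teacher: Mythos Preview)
Your proposal has a genuine circularity problem. You invoke the identification $\mathcal{W}_{-23/4}(F_4,A_1+\tilde{A}_1)\cong\mathcal C_4$ to establish nontriviality of the weight-$\tfrac32$ generators and the strong generation claim, but in the paper this identification is Theorem~\ref{ident-w-1}, which is proved \emph{after} and \emph{using} the proposition you are trying to prove: the uniqueness statement immediately preceding Theorem~\ref{ident-w-1} requires as input precisely the generator structure of $\mathcal{W}_{-23/4}(F_4,A_1+\tilde{A}_1)$ that this proposition establishes. So you cannot appeal to $\mathcal C_4$ here.

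There is also a conceptual mismatch in your treatment of the weight-$2$ and weight-$\tfrac52$ generators. You propose to show they become \emph{redundant}, i.e.\ expressible as normally ordered products of the currents and the weight-$\tfrac32$ primaries. That is not the mechanism the paper uses, nor is it obviously true at this stage: the paper instead applies \cite[Theorem 3.1]{AMP} directly to conclude that in the simple quotient $L=L_{\mathrm{sug}}$ and every primary generator of weight $2$ or $\tfrac52$ is actually \emph{zero}. This is a much sharper statement obtained from the general collapsing-level machinery, not from analyzing OPEs of the $\tfrac32$-fields.

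For non-collapsing, the paper's argument is independent of $\mathcal C_4$: since the weight-$\tfrac32$ space is an irreducible $\g^\natural$-module, either all ten generators survive or all vanish; if they vanished one would have $\mathcal{W}_{-23/4}(F_4,A_1+\tilde{A}_1)=L_{-6}(\mathfrak{sl}_2)\otimes L_{-7/4}(\mathfrak{sl}_2)$, contradicting Proposition~\ref{quasi-lisse} (the simple $W$-algebra is quasi-lisse, while the tensor product is not, as $L_{-6}(\mathfrak{sl}_2)$ is at a non-admissible integer level). This is the step you should use in place of the circular appeal to $\mathcal C_4$.
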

  \begin{proof}
   By applying again results from \cite{AMP} we get that  in   $\mathcal{W}_{-\frac{23}{4}}(F_4,A_1+\tilde{A}_1)$ we have:
\begin{itemize}
\item $L= L_{sug}$, where $L_{sug}$ is the Sugawara Virasoro vector of  $L_{-6}(\mathfrak{sl}_2) \otimes L_{-\frac{7}{4}} (\mathfrak{sl}_2)$,
\item If $G$ is a generator of  $\mathcal{W}^{-\frac{23}{4}}(F_4,A_1+\tilde{A}_1) $ of conformal weight $2$ and $\frac52$, $G \notin {\C} L$, when
$G = 0$ in $\mathcal{W}_{-\frac{23}{4}}(F_4,A_1+\tilde{A}_1) $.
\end{itemize}

Since the conformal weight $3/2$ space is irreducible $\g^{\natural}$--module, we have two possibilities
\begin{itemize}
\item[(i)] all generators of conformal weight $3/2$ vanish in   $\mathcal{W}_{-\frac{23}{4}}(F_4,A_1+\tilde{A}_1) $;
\item[(ii)] all generators of conformal weight $3/2$ are non-zero  in  $\mathcal{W}_{-\frac{23}{4}}(F_4,A_1+\tilde{A}_1) $.
\end{itemize}
The first possibility leads to the conclusion that  $\mathcal{W}_{-\frac{23}{4}}(F_4,A_1+\tilde{A}_1)  = L_{-6}(\mathfrak{sl}_2) \otimes L_{-\frac{7}{4}} (\mathfrak{sl}_2)$, but this contradicts Proposition \ref{quasi-lisse} below. Therefore the assertion (ii) holds and the proof follows.

  \end{proof}
  
  Next we have the following result:
  
  \begin{proposition} \label{quasi-lisse}The vertex algebra  $\mathcal{W}_{-\frac{23}{4}}(F_4,A_1+\tilde{A}_1)$ is quasi-lisse.
  \end{proposition}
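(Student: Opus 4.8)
The plan is to exploit the single most important structural fact established earlier: that $\mathcal{W}_{-\frac{23}{4}}(F_4,A_1+\tilde{A}_1)$ is obtained from an \emph{admissible} affine vertex algebra by Drinfeld--Sokolov reduction. By definition, $\mathcal{W}_k(F_4,A_1+\tilde{A}_1)$ is the simple quotient of $\mathcal{W}^k(F_4,A_1+\tilde{A}_1)=H^0_{DS,f}(V^k(F_4))$ with $f=f_{A_1+\tilde{A}_1}$. Since $k=-\frac{23}{4}=-9+\frac{13}{4}$ is an admissible level for the simple Lie algebra of type $F_4$ (here $h^\vee=9$), the simple affine vertex algebra $L_{-\frac{23}{4}}(F_4)$ is quasi-lisse. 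The general strategy is therefore to transport quasi-lisseness along the reduction functor, using the fact that $\mathcal{W}_{-\frac{23}{4}}(F_4,A_1+\tilde{A}_1)$ is a quotient of $H^0_{DS,f}(L_{-\frac{23}{4}}(F_4))$.

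First I would record that $L_{-\frac{23}{4}}(F_4)$ is quasi-lisse by the criterion of Arakawa--Kawasetsu \cite{AK}: its associated variety $X_{L_{-23/4}(F_4)}$ is contained in the nilpotent cone $\mathcal{N}$ of $F_4$, which has finitely many $G$-orbits, hence finitely many symplectic leaves. Next I would invoke the behaviour of associated varieties under quantum Hamiltonian reduction, namely that
$$X_{H^0_{DS,f}(V)} \subseteq X_V \cap \mathcal{S}_f,$$
where $\mathcal{S}_f$ is the Slodowy slice at $f$; this is the associated-variety analogue of the compatibility between the reduction functor and the Kirwan--type reduction of Poisson varieties (cf. Arakawa's work underlying \cite{Arakawa,AK}). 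Because $X_{L_{-23/4}(F_4)}$ has finitely many symplectic leaves and the intersection with the transverse slice $\mathcal{S}_f$ preserves this finiteness (the slice meets each leaf in a symplectic subvariety, and $\mathcal{N}\cap\mathcal{S}_f$ is again a finite union of orbit-slices), the reduced associated variety inherits finitely many symplectic leaves. Taking the simple quotient only shrinks the associated variety, so $X_{\mathcal{W}_{-23/4}(F_4,A_1+\tilde{A}_1)}$ also has finitely many symplectic leaves.

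The cleanest route, which I expect to be the one intended here, is simply to cite the general theorem that \emph{the Drinfeld--Sokolov reduction of a quasi-lisse vertex algebra is quasi-lisse}: this is precisely the statement that $R_{H^0_{DS,f}(L_k(\g))}$ has finitely many symplectic leaves whenever $R_{L_k(\g)}$ does, which follows from the above slice description together with the fact that $L_k(\g)$ admissible forces $X_{L_k(\g)}\subseteq\mathcal{N}$. Applying this with $\g$ of type $F_4$, $k=-\frac{23}{4}$ admissible, and $f=f_{A_1+\tilde{A}_1}$, one concludes immediately that $\mathcal{W}_{-\frac{23}{4}}(F_4,A_1+\tilde{A}_1)$ is quasi-lisse.

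The main obstacle is not in the argument's logic but in correctly justifying the associated-variety inequality $X_{H^0_{DS,f}(V)} \subseteq X_V \cap \mathcal{S}_f$ and the resulting finiteness of leaves; this requires the nontrivial compatibility of the Zhu $C_2$-algebra (equivalently, the associated scheme) with BRST reduction, rather than a direct computation. One should also verify cleanly that $-\frac{23}{4}$ is genuinely admissible for $F_4$ — i.e., that its denominator is coprime to the lacing data in the required way — since this is what places $X_{L_{-23/4}(F_4)}$ inside the nilpotent cone and hence guarantees the finite-leaf property before reduction. Once those two points are granted, quasi-lisseness of the $W$-algebra is formal.
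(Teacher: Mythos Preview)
Your overall strategy is the same as the paper's, but there is a genuine gap: you treat ``Drinfeld--Sokolov reduction of a quasi-lisse vertex algebra is quasi-lisse'' as an unconditional black box, and this is where the actual content of the proof lies. The result you want (essentially Proposition 6.3 of \cite{AK}) applies to $H^0_{DS,f}(L_k(\g))$, and your passage to the simple $W$-algebra $\mathcal{W}_k(\g,f)$ goes through the fact that it is a quotient of $H^0_{DS,f}(L_k(\g))$. But this chain only works if $H^0_{DS,f}(L_k(\g))\neq 0$, equivalently if $f\in X_{L_k(\g)}$. For an admissible level $k=-h^\vee+\tfrac{p}{q}$, the associated variety $X_{L_k(\g)}$ is not the full nilpotent cone but the closure $\overline{\mathbb{O}_q}$ of a specific nilpotent orbit (this is the input from \cite{AvEM}). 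So the nontrivial point you are missing is precisely to verify that $f_{A_1+\tilde{A}_1}\in\overline{\mathbb{O}_q}$ with $q=4$.

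The paper does exactly this check, and it is a direct computation, contrary to your final paragraph. Using the description of $\overline{\mathbb{O}_q}$ for non-simply-laced $\g$ from \cite{AvEM}, the condition $f\in\overline{\mathbb{O}_4}$ translates to $\pi_{\theta_s}(f)^4=0$, where $\pi_{\theta_s}$ is the irreducible $F_4$-representation of highest weight the highest short root $\theta_s=\omega_1$, i.e.\ the $26$-dimensional fundamental representation. One then computes the matrix $\pi_{\omega_1}(f_{A_1+\tilde{A}_1})$ and checks that its fourth power vanishes. Without this verification your argument does not establish that the simple $W$-algebra is a quotient of a quasi-lisse vertex algebra, and the conclusion does not follow.
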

  \begin{proof} Since $L_{-23/4}(\frak{f}_4)$ is quasi-lisse, using results of Arakawa et al. \cite{AvEM} and  Proposition 6.3 in \cite{AK}, it suffices to check that the condition
  $f_{A_1+\tilde{A}_1}$ is inside the closure of the nilpotent orbit of $\mathbb{O}_q$ (in our case $q=4$). This is equivalent to 
   $f_{A_1+\tilde{A}_1} \in \{ x \in \frak{f}_4: \pi_{\theta_s}(x)^{4}=0 \}$ where $\pi_{\theta_s}$ is the irreducible finite-dimensional representation of $\frak{f}_4$ with highest weight 
   $\theta_s$ where $\theta_s$ is the highest short root. We have $\theta_s=2\alpha_1+3\alpha_2+4 \alpha_3+2 \alpha_4=\omega_1$ and thus $\pi_{\theta_s}=\pi_{\omega_1}$ is the fundamental 
   $26$-dimensional representation. Explicit computation shows that $\pi_{\omega_1}(f_{A_1+\tilde{A}_1})^4=0$ as required.  
  \end{proof}
  
  Next we would like to identify  $\mathcal{W}_{-\frac{23}{4}}(F_4,A_1+\tilde{A}_1)$ with the vertex algebra  $V$ from previous section.  We have that both algebras are simple, and extensions of the vertex algebra  
  $ L_{-6}(\mathfrak{sl}_2) \otimes L_{-\frac{7}{4}} (\mathfrak{sl}_2)$ by weight $\frac32$ generators.
  
  In \cite[Theorem 3.1]{ACKL}, the authors considered the problem of uniqueness of minimal $W$--algebras, which are of the same type as simple $W$--algebra $\mathcal{W}_{-\frac{23}{4}}(F_4,A_1+\tilde{A}_1)$.
  Using the same proof as in \cite{ACKL} we can prove:
  
  \begin{proposition} Assume that   $\mathcal A = \bigoplus _{\ell\in \frac{1}{2} {\Z} } \mathcal  A(\ell)$ is another simple, $\tfrac{1}{2} {\Z}_{\ge 0}$--graded   conformal vertex algebra, such that
\begin{itemize}
 % \item[(1)] The central charges of $\mathcal A$ and  $\mathcal{W}_{-\frac{23}{4}}(F_4,A_1+\tilde{A}_1)$ coincides.
  \item[(1)] There is a conformal embeddings  $ L_{-6}(\mathfrak{sl}_2) \otimes L_{-\frac{7}{4}} (\mathfrak{sl}_2) \hookrightarrow  \mathcal A$.
  \item[(2)] $\mathcal A(\frac{1}{2}) = \{0 \}$, and  $\mathcal A$  is strongly generated by $\mathcal A(1) + \mathcal A(3/2)$ and  $\mathcal A(1) \cong \g^{\natural}$, $\mathcal A (\frac{3}{2} )  \cong  V_{A_1} (\omega_1) \otimes V_{A_1} (4 \omega_1)$ as  $\g^{\natural}$--modules.
  \item[(3)] The weight $3/2$ fields in both $\mathcal A$ and  $\mathcal{W}_{-\frac{23}{4}}(F_4,A_1+\tilde{A}_1)$ carry the same OPE with the Virasoro field and weight one fields.
  \end{itemize}
  Then $\mathcal A \cong  \mathcal{W}_{-\frac{23}{4}}(F_4,A_1+\tilde{A}_1)$. 
  \end{proposition}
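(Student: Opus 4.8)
The plan is to follow the strategy of \cite[Theorem 3.1]{ACKL} and reduce the assertion to the rigidity of the operator product expansions among the weight $\tfrac32$ generators. Write $\mathcal W:=\mathcal{W}_{-\frac{23}{4}}(F_4,A_1+\tilde{A}_1)$ and let $M:=V_{\mathfrak{sl}_2}(\omega_1)\otimes V_{\mathfrak{sl}_2}(4\omega_1)$ denote the weight $\tfrac32$ module, which is an irreducible $\g^{\natural}$--module of dimension $10$. By hypotheses (1) and (2) both $\mathcal A$ and $\mathcal W$ are simple, are strongly generated by $\g^{\natural}=\mathcal A(1)$ together with $M=\mathcal A(\tfrac32)$, and carry the identical affine subalgebra $L_{-6}(\mathfrak{sl}_2)\otimes L_{-\frac{7}{4}}(\mathfrak{sl}_2)$ with the same Virasoro field $L=L_{sug}$. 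Denoting the weight $\tfrac32$ fields by $G^a$, with $a$ ranging over a basis of $M$, hypothesis (3) fixes the OPEs $L(z)G^a(w)$ and $J^c(z)G^a(w)$, $c\in\g^{\natural}$, to coincide with those in $\mathcal W$. Hence the only structure not yet pinned down is the mutual OPE $G^a(z)G^b(w)$, and it suffices to show that it is uniquely determined up to the normalization of the generators and that this determination agrees with $\mathcal W$.

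First I would analyze the admissible shape of $G^a(z)G^b(w)$. By the $\tfrac12\Z_{\ge0}$--grading the singular terms have conformal weights $0,1,2$, so
\[
G^a(z)G^b(w)\sim \frac{A^{ab}\,\mathbf{1}}{(z-w)^3}+\frac{B^{ab}(w)}{(z-w)^2}+\frac{C^{ab}(w)}{z-w},
\]
with $A^{ab}\in\C$, $B^{ab}\in\g^{\natural}=\mathcal A(1)$ and $C^{ab}\in\mathcal A(2)$. These coefficients are $\g^{\natural}$--equivariant maps out of $M\otimes M$, so I would decompose $M\otimes M$ under $\g^{\natural}=\mathfrak{sl}_2\times\mathfrak{sl}_2$ and match summands: the scalar $A^{ab}$ is governed by the (unique up to scalar) invariant form on $M$, whose symmetry type is dictated by the skew--symmetry of the vertex algebra and by whether the trivial $\g^{\natural}$--module occurs in the symmetric or in the exterior square of $M$; the term $B^{ab}$ is an equivariant map $M\otimes M\to\g^{\natural}$, constrained to the copies of the adjoint inside $M\otimes M$; and $C^{ab}$ lands in the weight $2$ subspace $\mathcal A(2)$, which by strong generation is spanned by $L$, the quadratics $:\!J^cJ^d\!:$, the derivatives $\partial J^c$ and the components of $G_{(0)}G$. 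In each case $\g^{\natural}$--equivariance together with skew--symmetry reduces the freedom to finitely many scalars; crucially, since $M$ is $\g^{\natural}$--irreducible, the only equivariant rescalings of the generators are overall scalars, so the normalization ambiguity is a single scalar.

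Next I would impose associativity. The Jacobi (Borcherds) identity for the triples $(J^c,G^a,G^b)$ relates $B^{ab}$ and $C^{ab}$ to the known $\g^{\natural}$--action and to the invariant form, while the identity for $(G^a,G^b,G^d)$ produces quadratic relations among the remaining constants. Carrying out this computation --- as in \cite{ACKL}, where it is reduced to a finite check on the generators --- I expect to find that all structure constants are determined up to the single scaling of $M$ identified above. This associativity analysis is the heart of the argument and the main technical obstacle: one must verify that the representation--theoretic and symmetry constraints leave no free parameter beyond normalization, and that the resulting unique solution is exactly the one realized by $\mathcal W$, whose $G$--OPEs satisfy the same constraints by construction and by hypothesis (3).

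Finally I would conclude by simplicity. The preceding steps show that, after rescaling the weight $\tfrac32$ generators, the full collection of OPEs of $\mathcal A$ coincides with that of $\mathcal W$. The assignment matching $\g^{\natural}$, $L$ and the $G^a$ then extends to a vertex algebra homomorphism from the universal object generated by these fields onto both $\mathcal A$ and $\mathcal W$, and in particular yields a nonzero homomorphism $\mathcal A\to\mathcal W$, which is surjective by strong generation. Since $\mathcal A$ is simple its kernel is trivial, and since $\mathcal W$ is simple the image is all of $\mathcal W$, giving the desired isomorphism $\mathcal A\cong\mathcal{W}_{-\frac{23}{4}}(F_4,A_1+\tilde{A}_1)$.
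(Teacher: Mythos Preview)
Your proposal is correct and follows essentially the same approach as the paper, which simply states that the result is proved ``using the same proof as in \cite[Theorem 3.1]{ACKL}'' without further detail; you have spelled out what that argument looks like in this setting. One small imprecision in your final paragraph: from surjections of the universal object onto both $\mathcal A$ and $\mathcal W$ you do not directly get a map $\mathcal A\to\mathcal W$; rather, both are simple quotients of the same universal algebra and hence coincide with its unique simple quotient, which gives the isomorphism.
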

  
  It is easy to check that our vertex algebra $V$ satisfies the same conditions as  required for the vertex algebra $\mathcal A$ above. Thus we get:
  
  \begin{theorem} \label{ident-w-1} We have 
  $$ V \cong \mathcal{W}_{-\frac{23}{4}}(F_4,A_1+\tilde{A}_1). $$
  \end{theorem}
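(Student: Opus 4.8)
The plan is to verify that $V=\mathcal C_4$ satisfies the three hypotheses of the preceding Proposition and then to invoke it directly. First I would record the decomposition of $V=\mathcal C_4$ from Theorem \ref{c4-inverse}, writing $W_4(\ell)$ for its $\ell$-th summand; in particular $W_4(0)=L_{-6}(\mathfrak{sl}_2)\otimes L_{-\frac74}(\mathfrak{sl}_2)$, and the top component of $W_4(1)$ is $V_{\mathfrak{sl}_2}(4\omega_1)\otimes V_{\mathfrak{sl}_2}(\omega_1)$. Condition (1) would follow at once: $W_4(0)$ is the affine subalgebra generated by the image of $\rho_4$, and a central charge check ($\tfrac92-21=-\tfrac{33}{2}$, equal to the central charge of $\mathcal{W}_{-\frac{23}{4}}(F_4,A_1+\tilde A_1)$) shows that the conformal vector of $\mathcal C_4$ coincides with the Sugawara vector of $W_4(0)$, so the embedding is conformal.

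Next I would check condition (2). From the decomposition $V$ is $\tfrac12\Z_{\ge0}$-graded with $V(0)=\C\vac$ and $V(1)=\g^{\natural}=\mathfrak{sl}_2\times\mathfrak{sl}_2$ coming from $W_4(0)$. A short Sugawara computation gives that the top of each $W_4(\ell)$, $\ell\ge1$, sits in conformal weight $\tfrac{3\ell}{2}$, so that $V(\tfrac12)=\{0\}$ and $V(\tfrac32)$ is precisely the top of $W_4(1)$, a $10$-dimensional irreducible $\g^{\natural}$-module. Here I would take care to pair the two $\mathfrak{sl}_2$ factors with the correct levels (the level $-6$ copy carrying the $5$-dimensional representation and the level $-\frac74$ copy the $2$-dimensional one), so that $V(\tfrac32)\cong V_{A_1}(\omega_1)\otimes V_{A_1}(4\omega_1)$ exactly as in the Proposition. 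Strong generation of $V$ by $V(1)+V(\tfrac32)$ is then the Corollary following Theorem \ref{general-1} specialized to $p=4$ (six weight-one generators and $2p+2=10$ primaries of weight $\tfrac{p-1}{2}=\tfrac32$).

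For condition (3) I would argue that the weight-$\tfrac32$ generators are primary for $L=L_{sug}$: as the vectors of minimal conformal weight in the $W_4(0)$-module $W_4(1)$ they are annihilated by all positive current modes and hence by $L(n)$, $n>0$, so their OPE with $L$ is the universal primary OPE of weight $\tfrac32$ at $c=-\tfrac{33}{2}$. Their OPE with a weight-one current $J^{\{x\}}$, $x\in\g^{\natural}$, has only a simple pole, whose residue is the $\g^{\natural}$-action fixed in step (2); the potential double-pole coefficient $J^{\{x\}}_{(1)}G$ lies in $V(\tfrac12)=\{0\}$ and therefore vanishes. Since the identical description holds in $\mathcal{W}_{-\frac{23}{4}}(F_4,A_1+\tilde A_1)$, the two families of OPEs agree, and with $V$ simple (Theorem \ref{c4-inverse}) the Proposition yields $V\cong\mathcal{W}_{-\frac{23}{4}}(F_4,A_1+\tilde A_1)$.

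The step I expect to be the main obstacle is condition (3): one must be sure that the current- and Virasoro-OPEs of the weight-$\tfrac32$ fields are genuinely determined by primarity together with the $\g^{\natural}$-module structure, the decisive input being the vanishing of the double pole forced by $V(\tfrac12)=\{0\}$. The deeper structure constants among the weight-$\tfrac32$ fields themselves are not settled here but inside the uniqueness argument of the Proposition (the method of \cite{ACKL}); the task of this proof is merely to supply the correct structural data and cite it.
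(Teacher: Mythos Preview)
Your proposal is correct and follows exactly the approach the paper takes: the paper's proof is the single sentence ``It is easy to check that our vertex algebra $V$ satisfies the same conditions as required for the vertex algebra $\mathcal A$ above,'' and you have simply spelled out that check in detail. One small remark: for condition~(1) the conformal embedding $L_{-6}(\mathfrak{sl}_2)\otimes L_{-7/4}(\mathfrak{sl}_2)\hookrightarrow\mathcal C_4$ is already asserted as part of Theorem~\ref{main1} (and is built into the construction of $\mathcal C_p$), so you can cite that directly rather than inferring equality of conformal vectors from a central-charge comparison alone.
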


   \subsection{The vertex algebra $ \mathcal{W}_{-30+31/5}(E_8,A_4 + A_2)$}
   
Now  we shall prove that $\mathcal C_p \cong  \mathcal{W}_{-30+31/5}(E_8,A_4 + A_2)$ for $p=5$. As a consequence, we shall get that $\mathcal C_5$ is quasi-lisse. The approach is similar to those in Subsection \ref{w-f4} above, so we omit some details.
   
Let now $\g$ denoted the exceptional Lie algebra of type $E_8$. 
   % We shall now study vertex algebra   $\mathcal{W}_k(\g, A_4 + A_2)$ at admissible level $k= - 30 + \frac{31}{5}  = -\frac{119}{5}$. 
   Take nilpotent element $f$ in the Lie algebra of type $ A_4 + A_2$, and corresponding $\mathfrak{sl}_2$--triple $(e, f, x)$. 
   Then $\mbox{ad} (x)$ defines the following gradation on $g^f$
  $$ \g^f = \g^f_{-4} \oplus \g^f_{-3} \oplus    \g^f_{-2 }  \oplus \g^f_{-1} \oplus \g^{\natural}$$
 such that $\g^{\natural} = \mathfrak{sl}_2 \times \mathfrak{sl}_2$ and  as  $\g^{\natural}$--modules (this computation is performed with GAP  \cite{gap}):
\begin{align*}
\g^f_{-4} &= V_{\mathfrak{sl}_2} (2\omega_1) \otimes   V_{\mathfrak{sl}_2} (0) ,    \\
 \g^f _{- 3} &=  V_{\mathfrak{sl}_2} (4 \omega_1) \otimes  V_{\mathfrak{sl}_2} (0) + V_{\mathfrak{sl}_2} (3 \omega_1) \otimes  V_{\mathfrak{sl}_2} (\omega_1)  \\
 \g^f _{- 2} &=  V_{\mathfrak{sl}_2} (6 \omega_1) \otimes  V_{\mathfrak{sl}_2} (0) +  V_{\mathfrak{sl}_2} (2 \omega_1) \otimes  V_{\mathfrak{sl}_2} (0) +  V_{\mathfrak{sl}_2} (\omega_1) \otimes  V_{\mathfrak{sl}_2} (\omega_1) \\
  \g^f_{- 1} &=  V_{\mathfrak{sl}_2} (4 \omega_1) \otimes  V_{\mathfrak{sl}_2} (0) +     V_{\mathfrak{sl}_2} ( 5 \omega_1) \otimes  V_{\mathfrak{sl}_2} (\omega_1) +V_{\mathfrak{sl}_2} (0) \otimes  V_{\mathfrak{sl}_2} (0) . 
 \end{align*}
 
% [2;0;8],[4;0;6],[3;1;6],[6;0;4],[2;0;4],[1;1;4], [4;0;2],[0;0;2],[5;1;2],[2;0;0],[0;2;0]
 
 One can show that  there is a vertex algebra homomorphism $$ \Phi: V^{15 k+350}(\mathfrak{sl}_2)  \otimes V^{k+22} ( \mathfrak{sl}_2)  \hookrightarrow \mathcal{W}^k(\g,f).$$
  
  Applying \cite{KW} (details will be also presented in  \cite{ACLMPP} )  we get that  $\mathcal{W}^k(\g, f)$ is strongly  generated by
  \begin{itemize}
  \item   the generators of $\mbox{Im} (\Phi)$,  i.e, by $J^{\{ x \}}$, $x \in \g ^{\natural}$, 
  \item the Virasoro field $L$ of central charge $c =  -\frac{6 (12490 + 1095 k + 24 k^2)}{30 + k} $,
  \item $18 $--generators of conformal weight $2$,  
    \item $14$ generators of conformal weight $3$,  
      \item $13$ generators of conformal weight $4$, 
        \item $3$ generators of conformal weight $5$. 
  \end{itemize}
  
  Let $\mathcal{W}_k(\g, f)$ be the simple quotient of $\mathcal{W}^k(\g,f)$, and let $ \pi : \mathcal{W}^k(\g,f)\rightarrow 
  \mathcal{W}_k(\g, f)$ be the quotient homomorphism.
  Then we have a homomorhism $$\Phi_1 =  \pi \circ \Phi :  V^{15 k+350}(\mathfrak{sl}_2)  \otimes V^{k+22} ( \mathfrak{sl}_2) \rightarrow \mathcal{W}_k(\g, f).$$
  Central charge of the vertex algebra  $\mbox{Im} (\Phi_1)$ is given by $$c_{sug} = \frac{3 (15 k+350)}{ 15k + 352} + \frac{ 3(k+22)}{k+24}.$$   
  By applying \cite[Theorem 3.1]{AMP} we get that the embedding  $\mbox{Im} (\Phi_1) \hookrightarrow  \mathcal{W}_k(\g, f )$ is conformal if and only if     $c= c_{sug}$, i.e.,  when
  $k \in \{ -\frac{119}{5}, -\frac{70}{3}, -23 \}$. 
   
   In this section we consider the case  $k= -\frac{119}{5} $.
  Then $\mbox{Im}(\Phi_1)$ is isomorphic to the simple affine vertex algebra   
  $ L_{-7}(\mathfrak{sl}_2) \otimes L_{-\frac{9}{5}} (\mathfrak{sl}_2)$, 
  and we have the conformal embedding
  $$ L_{-7}(\mathfrak{sl}_2) \otimes L_{-\frac{9}{5}} (\mathfrak{sl}_2) \hookrightarrow 
   \mathcal{W}_{k}(\g, f). $$

   if and only if $c= c_{sug}$, i.e.,  when
  $k \in \{ -\frac{119}{5}, -\frac{70}{3}, -23 \}$. 
  %\end{proposition}
  
   \begin{remark}
  This result also will appear in   \cite{ACLMPP}  in more general context, where all conformal embeddings of affine vertex algebra to affine $W$--algebras of type $E$ will be classified.
  \end{remark}

  \begin{proposition} Let    $k =-30+ \frac{31}{5} =- \frac{119}{5}$.   Then 
   $\mathcal{W}_{k}(\g,f )$ is strongly generated by the generators of $\mbox{Im}(\Phi_1)$ and by   $12$ generators of conformal weight $2$.  
  \end{proposition}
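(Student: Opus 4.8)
The plan is to follow verbatim the strategy used for $\mathcal{W}_{-\frac{23}{4}}(F_4,A_1+\tilde A_1)$ in Subsection \ref{w-f4}, feeding in the $E_8$ data computed just above. First I would record that at $k=-\frac{119}{5}$ the embedding $\mbox{Im}(\Phi_1)\cong L_{-7}(\mathfrak{sl}_2)\otimes L_{-\frac{9}{5}}(\mathfrak{sl}_2)\hookrightarrow \mathcal{W}_{k}(\g,f)$ is conformal, since this is exactly the case $c=c_{sug}$ singled out by \cite[Theorem 3.1]{AMP}. Hence in the simple quotient the conformal vector coincides with the Sugawara vector, $L=L_{sug}$, and $\mathcal{W}_{k}(\g,f)$ becomes a direct sum of highest weight $L_{-7}(\mathfrak{sl}_2)\otimes L_{-\frac{9}{5}}(\mathfrak{sl}_2)$--modules in the category $KL$, each graded upward from its Sugawara lowest weight.

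The heart of the argument is the resulting weight-matching principle. Because $L=L_{sug}$, a Kac--Wakimoto strong generator coming from $\g^f_{-j}$, which is primary of conformal weight $\Delta=j+1$ and spans the $\g^{\natural}$--module $M=V_{\mathfrak{sl}_2}(n_1\omega_1)\otimes V_{\mathfrak{sl}_2}(n_2\omega_1)$, must vanish in the simple quotient unless $\Delta$ equals its Sugawara weight
$$ h_{sug}(M)=-\frac{n_1(n_1+2)}{20}+\frac{5\,n_2(n_2+2)}{4}, $$
the two $\mathfrak{sl}_2$--factors being matched to the levels $-7$ and $-\frac{9}{5}$. This is precisely the vanishing step borrowed from \cite{AMP} (and expanded in \cite{ACLMPP}) that was invoked for $F_4$. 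I would then run through the generator list: at weight $2$ the three modules $V(4\omega_1)\otimes V(0)$, $V(5\omega_1)\otimes V(\omega_1)$, $V(0)\otimes V(0)$ give $h_{sug}=-\tfrac65,\,2,\,0$; the weight-$3$ modules $V(6\omega_1)\otimes V(0)$, $V(2\omega_1)\otimes V(0)$, $V(\omega_1)\otimes V(\omega_1)$ give $-\tfrac{12}{5},-\tfrac25,\tfrac{18}{5}$; the weight-$4$ modules $V(4\omega_1)\otimes V(0)$, $V(3\omega_1)\otimes V(\omega_1)$ give $-\tfrac65,\,3$; and the weight-$5$ module $V(2\omega_1)\otimes V(0)$ gives $-\tfrac25$. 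Only $V(5\omega_1)\otimes V(\omega_1)$, the $12$--dimensional module of conformal weight $2$, satisfies $h_{sug}=\Delta$. Therefore all generators of conformal weight $3,4,5$, together with the weight-$2$ modules $V(4\omega_1)\otimes V(0)$ and $V(0)\otimes V(0)$ (the latter vanishing being just the identity $L=L_{sug}$), are zero in $\mathcal{W}_{k}(\g,f)$.

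It remains to check that the surviving $12$ generators are nonzero. They form the single irreducible $\g^{\natural}$--module $V(5\omega_1)\otimes V(\omega_1)$, so either all vanish or none does. If they all vanished, $\mathcal{W}_{-\frac{119}{5}}(\g,f)$ would collapse to $L_{-7}(\mathfrak{sl}_2)\otimes L_{-\frac{9}{5}}(\mathfrak{sl}_2)$; but $L_{-7}(\mathfrak{sl}_2)$ is a negative integer level affine vertex algebra whose associated variety is all of $\mathfrak{sl}_2^{*}$, which has infinitely many symplectic leaves, so this tensor product is not quasi-lisse. This contradicts Proposition \ref{quasi-lisse-e8}. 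Hence $k=-\frac{119}{5}$ is not a collapsing level, all $12$ generators survive, and $\mathcal{W}_{-\frac{119}{5}}(\g,f)$ is strongly generated by $\mbox{Im}(\Phi_1)$ together with the $12$ weight-$2$ fields, as claimed.

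The step I expect to be the main obstacle is making the weight-matching principle fully rigorous: one must confirm that each generator, being primary for $L=L_{sug}$ and a $\g^{\natural}$--highest weight vector, is a genuine affine singular vector, so that the comparison of $\Delta$ with $h_{sug}(M)$ is legitimate and forces the vanishing. This is exactly the technical input supplied by \cite{AMP} (and \cite{KW} for the generator type); once it is granted, the remaining work is the elementary Sugawara-weight computation tabulated above and the non-collapsing argument.
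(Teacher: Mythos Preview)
Your proposal is correct and follows essentially the same approach as the paper: invoke \cite{AMP} to get $L=L_{sug}$ and the vanishing of generators whose conformal weight does not match their Sugawara weight, then rule out collapsing by contradiction with the quasi-lisse property (Proposition \ref{quasi-lisse-e8}). Your version is in fact more explicit than the paper's, which simply asserts that the weight $3,4,5$ generators vanish and that the weight-$2$ space reduces to $\C L + U$ with $U$ irreducible; your Sugawara-weight table spells out exactly why the extra weight-$2$ piece $V(4\omega_1)\otimes V(0)$ drops out as well, and your remark that $L_{-7}(\mathfrak{sl}_2)$ has associated variety $\mathfrak{sl}_2^{*}$ supplies the reason the collapsed algebra cannot be quasi-lisse, which the paper leaves implicit.
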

  \begin{proof}
   By applying again results from \cite{AMP} we get that  in   $\mathcal{W}_{k}(\g, f)$ we have:
\begin{itemize}
\item $L= L_{sug}$, where $L_{sug}$ is the Sugawara Virasoro vector of  $L_{-7}(\mathfrak{sl}_2) \otimes L_{-\frac{9}{5}} (\mathfrak{sl}_2)$.
\item If $G$ is a generator of  $\mathcal{W}^{k}(\g, f) $ of conformal weight $3$, $4$ or $5$  then
$G = 0$ in $\mathcal{W}_{k}(\g,f) $.
\end{itemize}

Since the conformal weight $2$ consist  of ${\C} L + U$, where $U$ is  an irreducible $\g^{\natural}$--module, we have two possibilities
\begin{itemize}
\item[(i)] all generators  from $U$ vanish in   $\mathcal{W}_{k}(\g,f) $;
\item[(ii)] all generators  from $U$    are non-zero  in  $\mathcal{W}_{k}(\g, f) $.
\end{itemize}
The first possibility leads to the conclusion that  $\mathcal{W}_{k}(\g, f)  = L_{-7}(\mathfrak{sl}_2) \otimes L_{-\frac{9}{5}} (\mathfrak{sl}_2)$, but this contradicts Proposition  \ref{quasi-lisse-e8} below. 
Therefore (ii) holds. The proof follows.

  \end{proof}
  
  Next we have the following result:
  
  \begin{proposition} \label{quasi-lisse-e8} The vertex algebra  $\mathcal{W}_{k}(\g, f)$ is quasi-lisse.
  \end{proposition}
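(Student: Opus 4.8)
The plan is to follow the same route as in the proof of Proposition \ref{quasi-lisse}, replacing $F_4$ by $\g = E_8$ and the orbit $A_1+\tilde A_1$ by $A_4+A_2$. First I would record that the level is admissible: since $h^\vee(E_8)=30$ we have $k+h^\vee=-\tfrac{119}{5}+30=\tfrac{31}{5}$, so $k=-h^\vee+\tfrac{p}{q}$ with $p=31$, $q=5$, $\gcd(p,q)=1$ and $p\ge h^\vee$. As $E_8$ is simply laced this is a principal admissible level, hence $L_{-119/5}(E_8)$ is quasi-lisse by \cite{AK}, and by Arakawa's description of associated varieties its associated variety is the closure $\overline{\mathbb{O}_q}$ of the nilpotent orbit $\mathbb{O}_q$ attached to the denominator $q=5$. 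By the results of \cite{AvEM} together with Proposition 6.3 of \cite{AK}, the functor $H_{DS,f}$ sends a quasi-lisse vertex algebra whose associated variety lies in the nilpotent cone to a quasi-lisse vertex algebra, provided $f\in X_{L_k(E_8)}$; in that case $X_{\mathcal{W}_k(E_8,f)}=\overline{\mathbb{O}_5}\cap\mathcal{S}_f$ is a finite union of intersections of symplectic leaves of $\overline{\mathbb{O}_5}$ with the Slodowy slice $\mathcal{S}_f$, hence has finitely many symplectic leaves. Thus everything reduces to the single geometric statement
\[
f_{A_4+A_2}\in\overline{\mathbb{O}_5},\qquad\text{equivalently}\qquad \mathbb{O}_{A_4+A_2}\le\mathbb{O}_5
\]
in the closure order on nilpotent orbits of $E_8$.

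The key step, and the one I expect to be the main obstacle, is to pin down $\mathbb{O}_5$ and to certify this containment. In the $F_4$ argument the containment was checked by the compact test $\pi_{\theta_s}(f)^q=0$ in the $26$-dimensional representation with highest weight the highest short root. This shortcut does not transfer verbatim to $E_8$: since $E_8$ is simply laced the highest short root equals the highest root, so $\pi_{\theta_s}$ is the adjoint $248$-dimensional representation and the test would read $(\ad f_{A_4+A_2})^5=0$. But the given $\ad(x)$-grading already shows $\g^f_{-4}\ne 0$, so $\g$ contains an $\mathfrak{sl}_2$-submodule of spin $4$ on which $\ad f$ acts as a single Jordan block of size $9$; consequently $(\ad f_{A_4+A_2})^8\ne 0$ and the naive nilpotency test fails. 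I would therefore verify the containment directly: identify $\mathbb{O}_5$ from the classification of associated varieties of admissible $\widehat{E_8}$-vertex algebras, and then compare it with $A_4+A_2$ using the Hasse diagram of the nilpotent poset of $E_8$ (Collingwood--McGovern / Spaltenstein), or, concretely, by an explicit computation in GAP \cite{gap} --- the same package already used for the branching data above --- producing a representative $f_{A_4+A_2}$ and checking that it satisfies the rank and invariant conditions cutting out $\overline{\mathbb{O}_5}$. A useful consistency check is the dimension count $\dim\mathbb{O}_{A_4+A_2}=248-\dim\g^f=248-54=194$, which must not exceed $\dim\mathbb{O}_5$.

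Granting the containment $\mathbb{O}_{A_4+A_2}\le\mathbb{O}_5$, the conclusion is immediate: $H_{DS,f}\big(L_{-119/5}(E_8)\big)=\mathcal{W}_{-119/5}(E_8,A_4+A_2)$ has associated variety $\overline{\mathbb{O}_5}\cap\mathcal{S}_{f_{A_4+A_2}}$ with finitely many symplectic leaves, so it is quasi-lisse. Combined with the identification $\mathcal{C}_5\cong\mathcal{W}_{-30+31/5}(E_8,A_4+A_2)$ established in Theorem \ref{c5}, this shows that $\mathcal{C}_5$ is quasi-lisse as well.
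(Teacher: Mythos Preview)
Your proposal is correct and follows the same route as the paper: reduce via \cite{AvEM} and Proposition~6.3 of \cite{AK} to the containment $f_{A_4+A_2}\in\overline{\mathbb{O}_5}$, then verify this by explicit computation. The paper's own proof is just the one-line remark ``this follows using explicit computation as in Proposition~\ref{quasi-lisse}''; your observation that the literal $\pi_{\theta_s}(f)^q=0$ test from the $F_4$ case cannot apply verbatim here (since $E_8$ is simply laced and $(\ad f_{A_4+A_2})^8\ne 0$) and must be replaced by a Hasse-diagram or GAP verification is a useful clarification of what that remark actually requires.
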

  \begin{proof} Since $L_{k}(E_8)$ is quasi-lisse, using results of Arakawa et al. \cite{AvEM} and  Proposition 6.3 in \cite{AK}, it suffices to check that the condition
  $f$ is inside the closure of the nilpotent orbit of $\mathbb{O}_q$ (in our case $q=5$).  This follows using explicit computation as in Proposition \ref{quasi-lisse}.
    \end{proof}
  
Finally, we identify  $\mathcal{W}_{k}(E_8, f)$ with the vertex algebra  $\mathcal C_5$. We know that they are both algebras are simple, and are extensions of the vertex algebra  
  $ L_{-7}(\mathfrak{sl}_2) \otimes L_{-\frac{9}{5}} (\mathfrak{sl}_2)$ and weight $2$ generators. By using similar arguments as in the proof of  Theorem \ref{ident-w-1}  we get:

  \begin{theorem} \label{c5} We have an isomorphism of vertex algebras
  $$ \mathcal C_5  \cong \mathcal{W}_{k}(\g, f). $$
  \end{theorem}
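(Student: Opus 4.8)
The plan is to prove the isomorphism by the same uniqueness strategy used for Theorem~\ref{ident-w-1}. Both $\mathcal C_5$ and $\mathcal{W}_{k}(\g,f)$ (with $\g$ of type $E_8$ and $k=-\tfrac{119}{5}$) are simple, $\tfrac12\mathbb{Z}_{\geq 0}$-graded conformal vertex algebras containing a conformal copy of $L_{-7}(\mathfrak{sl}_2)\otimes L_{-9/5}(\mathfrak{sl}_2)$, and each is strongly generated by that subalgebra together with a space of weight-$2$ fields. First I would match the structural data on the two sides. On the $\mathcal C_5$ side, specializing Theorem~\ref{general-1} to $p=5$ gives $W_5(0)=L_{-7}(\mathfrak{sl}_2)\otimes L_{-9/5}(\mathfrak{sl}_2)$, while the generating piece $W_5(1)$ has top component $V_{\mathfrak{sl}_2}(5\omega_1)\otimes V_{\mathfrak{sl}_2}(\omega_1)$ of conformal weight $\tfrac{p-1}{2}=2$. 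On the $W$-algebra side, the preceding Proposition shows that $\mathcal{W}_{k}(\g,f)$ is strongly generated by $\mbox{Im}(\Phi_1)=L_{-7}(\mathfrak{sl}_2)\otimes L_{-9/5}(\mathfrak{sl}_2)$ together with the $12$ weight-$2$ generators; these are exactly the summand $V_{\mathfrak{sl}_2}(5\omega_1)\otimes V_{\mathfrak{sl}_2}(\omega_1)$ of $\g^f_{-1}$ that survives in the simple quotient, the trivial summand having collapsed into $L=L_{sug}$ and the $V_{\mathfrak{sl}_2}(4\omega_1)\otimes V_{\mathfrak{sl}_2}(0)$ summand having vanished. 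Thus the two algebras share the same conformal subalgebra, the same grading, and the same $\g^{\natural}=\g_0$-module of generators.

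Second, I would set up the uniqueness proposition in direct analogy with the one stated in Subsection~\ref{w-f4}: any simple, $\tfrac12\mathbb{Z}_{\geq 0}$-graded conformal vertex algebra $\mathcal A$ with $\mathcal A(\tfrac12)=\mathcal A(\tfrac32)=\{0\}$, containing a conformal copy of $L_{-7}(\mathfrak{sl}_2)\otimes L_{-9/5}(\mathfrak{sl}_2)$ with $\mathcal A(1)\cong\g^{\natural}$, strongly generated by $\mathcal A(1)$ together with a weight-$2$ primary space isomorphic to $V_{\mathfrak{sl}_2}(5\omega_1)\otimes V_{\mathfrak{sl}_2}(\omega_1)$, and whose weight-$2$ fields carry the same OPE with the Virasoro field and the affine currents as in $\mathcal{W}_{k}(\g,f)$, must be isomorphic to $\mathcal{W}_{k}(\g,f)$. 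Following the argument of \cite{ACKL}, the only datum left to fix is the singular part of the OPE of two weight-$2$ generators, which by the fusion rule $W(1)\cdot W(1)=W(0)\oplus W(2)$ takes values in $\mathcal A(1)\oplus\mathcal A(2)$; $\g^{\natural}$-equivariance and the conformal-weight constraints cut the admissible structure constants down to a finite-dimensional space, and simplicity together with the Jacobi identity forces a unique normalization.

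Third, I would verify that $\mathcal C_5$ meets every hypothesis of that proposition. Simplicity and the conformal embedding come from Theorem~\ref{general-1}; the vanishing of $\mathcal A(\tfrac12)$ and $\mathcal A(\tfrac32)$, the identification $\mathcal A(1)\cong\g_0=\g^{\natural}$, and the weight-$2$ module structure follow from the explicit decomposition $\mathcal C_5=\bigoplus_{\ell}W_5(\ell)$; and the OPE condition holds because the weight-$2$ generators are $\g^{\natural}$-primary of the prescribed highest weight inside a conformal extension, so their brackets with the currents and with $L$ are fixed by representation theory. Invoking the proposition then yields $\mathcal C_5\cong\mathcal{W}_{k}(\g,f)$.

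The hardest step is the uniqueness proposition itself---concretely, showing that the space of $\g^{\natural}$-equivariant, weight-consistent choices for the $W(1)\times W(1)$ OPE is rigid enough (essentially one-dimensional up to rescaling of the generators) that simplicity pins it down. In the minimal-$W$-algebra setting of \cite{ACKL} used for $F_4$ this rigidity is classical, but here the nilpotent $A_4+A_2$ is non-minimal, so one must confirm by an explicit (if routine) count of the intertwiner spaces $\Hom_{\g^{\natural}}\bigl((V_{\mathfrak{sl}_2}(5\omega_1)\otimes V_{\mathfrak{sl}_2}(\omega_1))^{\otimes 2},\,\mathcal A(1)\oplus\mathcal A(2)\bigr)$ that no free parameters appear beyond those already matched by the conformal embedding.
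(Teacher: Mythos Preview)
Your proposal is correct and follows exactly the approach indicated in the paper: the paper's own proof is a single sentence referring back to the argument for Theorem~\ref{ident-w-1}, and you have spelled out precisely that plan---matching the conformal subalgebra, the grading, and the $\g^{\natural}$-module of weight-$2$ generators, then invoking an \cite{ACKL}-style uniqueness statement. Your identification of the surviving weight-$2$ piece as $V_{\mathfrak{sl}_2}(5\omega_1)\otimes V_{\mathfrak{sl}_2}(\omega_1)$ and your caution about extending the rigidity argument from the minimal case to the non-minimal nilpotent $A_4+A_2$ are both appropriate; the paper glosses over the latter point entirely.
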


\section{Proofs of character identities}

\label{char-id-section}

\begin{lemma} \label{identitet-prvi} The following  $q$-series identity holds:
\begin{equation} \label{m2-2-ch-appendix}  {\rm ch}[ M_{(2)}] = \sum_{\ell = 0 } ^{\infty}  {\rm ch}[  L_{\widehat{\frak{sl}_2}} (- (5 + 3 \ell  ) \Lambda_0 +  3 \ell  \Lambda_1))]  \cdot {\rm ch}[   
  L^{Vir} (c_{1,3}, \frac{\ell (3 \ell +4)}{4}  )] .  
  \end{equation}
\end{lemma}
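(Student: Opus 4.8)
The plan is to reduce the identity (\ref{m2-2-ch-appendix}) to a single theta--series identity and then invoke the classical evaluation of Legendre. First I would compute the left--hand side directly from the free--field realization. Since $M_{(2)}$ is freely generated by the four weight--$\tfrac12$ fields $a_1^\pm,a_2^\pm$ and has central charge $c=5+(-7)=-2$, we get
\[ {\rm ch}[M_{(2)}]=q^{1/12}\prod_{n\ge 1}(1-q^{n-1/2})^{-4}. \]
Using the Euler--Gauss product $\prod_{n\ge1}(1-q^{n-1/2})=(q)_\infty/\psi(q^{1/2})$, where $\psi(q)=\sum_{n\ge0}q^{n(n+1)/2}=(q^2;q^2)_\infty^2/(q)_\infty$, this becomes ${\rm ch}[M_{(2)}]=q^{1/12}(q)_\infty^{-4}\,\psi(q^{1/2})^4$.

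Next I would rewrite the right--hand side using Lemma~\ref{karakteri}. Each affine factor carries the prefactor $q^{-5/24}(q)_\infty^{-3}$ and each Virasoro factor the prefactor $q^{7/24}(q)_\infty^{-1}$, so every summand shares the common prefactor $q^{1/12}(q)_\infty^{-4}$. Writing $A_\ell(q)$ for the bracketed (theta--type) numerator of the affine character attached to the $\ell$--th summand --- the first formula of Lemma~\ref{karakteri} when $\ell$ is even and the second when $\ell$ is odd --- and $B_\ell(q)=q^{h^{1,3}_{1,\ell+1}}-q^{h^{1,3}_{1,-(\ell+1)}}$ for the corresponding Virasoro numerator, the identity (\ref{m2-2-ch-appendix}) becomes, after cancelling the common factor $q^{1/12}(q)_\infty^{-4}$, equivalent to
\[ \psi(q^{1/2})^4=\sum_{\ell\ge0}A_\ell(q)\,B_\ell(q). \]

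Finally, I would recognize the left--hand side as a theta series: $\psi(q^{1/2})^4=\sum_{n_1,\dots,n_4\ge0}q^{\frac14\sum_i n_i(n_i+1)}$ is exactly the generating function counting representations of an integer as a sum of four triangular numbers. The remaining task is to reorganize the double sum $\sum_\ell A_\ell B_\ell$ into a single lattice theta series: expand each difference $A_\ell$ and $B_\ell$, multiply out the four resulting products, and interchange the order of summation between $\ell$ and the internal indices. The signs and the ranges of the finite inner sums in Lemma~\ref{karakteri} are arranged precisely so that these contributions assemble into the unrestricted four--fold theta sum above; matching coefficients then amounts to Legendre's evaluation of the number of such representations by a divisor sum, which is \cite[Example 5.1]{KW-94}, completing the proof.

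I expect the combinatorial reorganization in this last step to be the main obstacle: controlling the interchange of summations and verifying that the finite inner sums of Lemma~\ref{karakteri} recombine, with the correct signs, into the full four--fold theta series is the delicate bookkeeping on which the identity rests. Everything preceding it (the product formula for ${\rm ch}[M_{(2)}]$, the eta/theta rewriting, and the cancellation of prefactors) is routine normalization.
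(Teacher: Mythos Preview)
Your setup matches the paper exactly: compute ${\rm ch}[M_{(2)}]=q^{1/12}(q)_\infty^{-4}\psi(q^{1/2})^4$, use Lemma~\ref{karakteri} to strip the same prefactor from the right--hand side, and reduce to $\psi(q^{1/2})^4=\mathcal B[q]:=\sum_\ell A_\ell B_\ell$. Invoking \cite[Example~5.1]{KW-94} is also the right move.

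The gap is that you stop precisely where the real work begins. You describe the remaining step as ``expand, multiply out, interchange summations, and the signs are arranged so that everything assembles into the four--fold theta sum,'' then admit this is the main obstacle. It is, and it needs to be carried out --- the assertion that the pieces recombine correctly is the content of the lemma, not an input to it. Two more specific points:

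\emph{First}, the target is not the raw four--fold sum $\sum_{n_1,\dots,n_4}q^{\frac14\sum n_i(n_i+1)}$ (that is $\psi(q^{1/2})^4$ tautologically and Legendre would be irrelevant). What \cite[Example~5.1]{KW-94} gives is the two--variable divisor form
\[
\psi(q^{1/2})^4=\sum_{j,k\ge 0}(2k+1)\,q^{\frac{(2j+1)(2k+1)-1}{4}},
\]
and it is this expression one matches to $\mathcal B[q]$.

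\emph{Second}, the matching is not a generic interchange of summations. The paper partitions $\{(j,k)\in\mathbb{Z}_{\ge0}^2 : k\not\equiv 1\bmod 3\}$ into eight explicit congruence classes (parity of $j$, residue of $k$ mod $3$, and a sign condition), and shows via a linear change of indices that each class corresponds bijectively to one of the eight double sums making up $\mathcal B[q]$, with coefficient $3j-k+1$ rather than $2k+1$. A final symmetry argument (swap $j\leftrightarrow s$ when $k=3s+1$) shows the omitted residue class contributes zero and that $\sum(3j-k+1)q^{\cdots}=\sum(2k+1)q^{\cdots}$. None of this is visible from your sketch; you should either execute it or state clearly that the identity you are reducing to is itself nontrivial and give the explicit bijection.
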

\begin{proof} Basic manipulation with $q$-series (here $c=-2$) gives
\begin{align*}
{\rm ch}[M_{(2)}] &=  q^{-c/24 } \prod_{ n= 1} ^{\infty} \frac{1}{(1-q^{n-1/2} )^{4}} = q^{1/12 } \prod_{ n= 1} ^{\infty} (1-q^{n-1/2} )  ^{-4} 
\nonumber \\
  & = q^{1/12 } \frac{1}{(q)_\infty^ 4} \left( \frac{ (q)_\infty^2 }  {(q^{1/2})_\infty} \right) ^4 = q^{1/12}  \frac{1}{(q)_\infty^ 4}  \Delta  (q^{1/2}) ^ 4, 
\end{align*}
where
$$\Delta  (q) = \sum_{ n \in {\Z}_{\ge 0} } q^ { n (n+1) /2} =\frac{  (q^2)_\infty^2 }{ (q)_\infty}. $$

Using  Lemma  \ref{karakteri}  we get that the $q$-character  of the right side of (\ref{m2-1}) is given by
$  \frac{q^{1/12} }{(q)_\infty^4}   \mathcal B[q]  $
where
\bea
\mathcal B[q]   :=  && \sum_{\ell   = 0}^{\infty}   \sum_{i= 0}^{\ell}  (6 i + 1)    \left(  
  q^{\ell (3 \ell  + 2) - i (3 i + 1)}   - 
   q^{ (\ell + 1) (3 \ell + 1 ) - i (3 i + 1)} \right)  \nonumber  \\
   +&&  \sum_{\ell   = 1}^{\infty}   \sum_{i= 1}^{\ell}   (6 i - 1)    \left( - q^{\ell (3 \ell + 2) - i (3 i - 1) }   +  
   q ^{ (\ell  + 1 ) (3 \ell + 1  ) - i (3 i - 1)} \right)  \nonumber   \\
   + &&  \sum_{\ell   = 1}^{\infty}   \sum_{i= 1}^{\ell}   (6 i - 2) \left( 
  q ^{ \frac{ (2 \ell - 1) (6 \ell + 1) - (2 i - 1) (6 i - 1)}{4} }   - 
   q^{\frac{ (2 \ell + 1) (6 \ell - 1) - (2 i - 1) (6 i - 1)}{4} }  \right)  \nonumber   \\
   +&&   \sum_{\ell   = 1}^{\infty}   \sum_{i= 1}^{\ell} (6 i - 4) \left( - q^{ \frac{ (2 \ell - 1) (6 \ell + 1) - (2 i - 1) (6 i - 5)}{4}  }   +
    q^{\frac{ (2 \ell + 1) (6 \ell - 1) - (2 i - 1) (6 i - 5)}{4}} \right)  \nonumber
.  \eea
Therefore  the  proof of the theorem is now reduced to  the following  identity:
\bea  \label{identity} &&  \frac{ (q)_\infty^ {8} } { (q^{1/2})_\infty^4 } = \Delta ( q^{1/2 } ) ^4    = \mathcal B[q]  \nonumber  \eea

By  \cite[Example 5.1]{KW-94} we have that
$$ \Delta(q) ^4 = \sum_{j, k = 0} ^{\infty}  (2 k +1) q^{\frac{ (2 j +1) (2k +1) -1 }{2}  }. $$
  
Next we organize pairs $(j,k) \in \mathbb{Z}_{\ge 0 } ^2$, such that $ k \equiv 0, 2 \mod 3 $, into eight congruent classes. 
Inspection shows that each such pair satisfies one and only one condition as in the table below.
\begin{table}[h]
    \centering
    \begin{adjustbox}{max width=\textwidth}
    \begin{tabular}{c|c|c|c|c}
        \toprule
         & Condition & $\ell \in \mathbb{Z}$ & $i \in \mathbb{Z}$ & Relations \\
        \midrule
        (1) & $\frac{j}{2} - \frac{k}{6} \in \mathbb{Z}_{\geq 0}$ & $\frac{k}{6} + \frac{j}{2}$ & $\frac{j}{2} - \frac{k}{6}$ & $k=3(\ell-i), j = \ell+i$ \\
        & & & & $6i+1 = 3j - k +1$ \\
        (2) & $\frac{j}{2} - \frac{k}{6} \in \mathbb{Z}_{< 0}$ & $\frac{k}{6} + \frac{j}{2}$ & $\frac{k}{6} - \frac{j}{2}$ & $k=3(\ell+i), j = \ell-i$ \\
        & & & & $-(6i-1) = 3j - k +1$ \\
        (3) & $\frac{k}{6} - \frac{j}{2} + \frac{1}{2} \in \mathbb{Z}_{\geq 0}$ & $\frac{k}{6} + \frac{j+1}{2}$ & $\frac{k}{6} - \frac{j-1}{2}$ & $k=3(\ell+i-1), j=\ell-i$ \\
        & & & & $-(6i-4) = 3j - k +1$ \\
        (4) & $\frac{k}{6} - \frac{j}{2} + \frac{1}{2} \in \mathbb{Z}_{< 0}$ & $\frac{k}{6} + \frac{j+1}{2}$ & $\frac{j+1}{2} - \frac{k}{6}$ & $k=3(\ell-i), j=\ell+i-1$ \\
        & & & & $6i-2 = 3j - k +1$ \\
        (5) & $\frac{k}{6} - \frac{j}{2} - \frac{1}{3} \in \mathbb{Z}_{\geq 0}$ & $\frac{k+1}{6} + \frac{j-1}{2}$ & $\frac{k+1}{6} - \frac{j+1}{2}$ & $k=3(\ell+i)+2, j=\ell-i$ \\
        & & & & $-(6i+1) = 3j - k +1$ \\
        (6) & $\frac{k}{6} - \frac{j}{2} - \frac{1}{3} \in \mathbb{Z}_{< 0}$ & $\frac{k+1}{6} + \frac{j-1}{2}$ & $-\frac{k+1}{6} + \frac{j+1}{2}$ & $k=3(\ell-i)+2, j=\ell+i$ \\
        & & & & $6i-1 = 3j - k +1$ \\
        (7) & $\frac{j}{2} - \frac{k}{6} + \frac{5}{6} \in \mathbb{Z}_{\geq 0}$ & $\frac{k+1}{6} + \frac{j}{2}$ & $\frac{j}{2} - \frac{k+1}{6} + 1$ & $k=3(\ell-i)+2, j=\ell+i-1$ \\
        & & & & $6i-4 = 3j - k +1$ \\
        (8) & $\frac{j}{2} - \frac{k}{6} + \frac{5}{6} \in \mathbb{Z}_{< 0}$ & $\frac{k+1}{6} + \frac{j}{2}$ & $\frac{k+1}{6} - \frac{j}{2}$ & $k=3(\ell+i)-1, j=\ell-i$ \\
        & & & & $-(6i-2) = 3j - k +1$ \\
        \bottomrule
    \end{tabular}
    \end{adjustbox}
\end{table}

For $m =1, \dots, 8$, denote by $S_m = \{ (j,k) \in \mathbb{Z}_{\geq 0}^2 \mid (j,k) \text{ satisfies Condition } (m) \}$. Then,

\[
\mathbb{Z}_{\geq 0}^2 \setminus \{ (j, 3s +1) \mid s, j \geq 0 \} = \bigcup_{1 \leq m \leq 8} S_m, \quad S_i \cap S_j = \emptyset \text{ for } i \neq j.
\]
Condition (1) in the table gives the following $q$-series relation:
\begin{align*}
    &\sum_{\ell=0}^{\infty} \sum_{i=0}^{\ell} (6i + 1) q^{\ell(3\ell+2) - i(3i+1)} \quad = \sum_{k=3(\ell-i), j=\ell+i}^{\infty} (3j - k + 1) q^{\frac{(2j+1)(2k+1) -1}{4}} \quad \\
    & = \sum_{(j,k) \in S_1} (3j - k + 1) q^{\frac{(2j+1)(2k+1) -1}{4}}. \\
\end{align*}
Similarly, using Conditions (2)-(8), we obtain:
    % Formula (1)
%    & \sum_{\ell \geq 0} \sum_{i=0}^{\ell} (6i + 1) q^{\ell(3\ell + 2) - i(3i + 1)} \
%    & = \sum_{(j,k) \in S_1} (3j - k + 1) q^{\frac{(2j +1)(2k +1) -1}{4}}, \\
    % Formula (2)
$$- \sum_{\ell \geq 1} \sum_{i=1}^{\ell} (6i - 1) q^{\ell(3\ell + 2) - i(3i - 1)} \
 = \sum_{(j,k) \in S_2} (3k - j +1) q^{\frac{(2j +1)(2k +1) -1}{4}}, $$
 
$$ -\sum_{\ell \geq 1} \sum_{i=1}^{\ell} (6i - 4) q^{\frac{(2\ell - 1)(6\ell + 1) - (2i - 1)(6i - 5)}{4}} \
    = \sum_{(j,k) \in S_3} (3j - k +1) q^{\frac{(2j +1)(2k +1) -1}{4}}, 
    $$
   
    % Formula (4)
    $$ \sum_{\ell \geq 1} \sum_{i=1}^{\ell} (6i - 2) q^{\frac{(2\ell - 1)(6\ell + 1) - (2i - 1)(6i - 1)}{4}} \
     = \sum_{(j,k) \in S_4} (3j - k +1) q^{\frac{(2j +1)(2k +1) -1}{4}}, $$
    % Formula (5)
    $$-\sum_{\ell \geq0} \sum_{i=0}^{\ell} (6i + 1) q^{(\ell+1)(3\ell + 1) - i(3i + 1)} \
     = \sum_{(j,k) \in S_5} (3j - k +1) q^{\frac{(2j +1)(2k +1) -1}{4}}, $$
    % Formula (6)
    $$ \sum_{\ell \geq 1} \sum_{i=1}^{\ell} (6i - 1) q^{(\ell+1)(3\ell + 1) - i(3i - 1)} \
     = \sum_{(j,k) \in S_6} (3j - k +1) q^{\frac{(2j +1)(2k +1) -1}{4}}, $$
    % Formula (7)
    $$ \sum_{\ell \geq 1} \sum_{i=1}^{\ell} (6i - 4) q^{\frac{(2\ell + 1)(6\ell - 1) - (2i - 1)(6i - 5)}{4}} \
     = \sum_{(j,k) \in S_7} (3j - k +1) q^{\frac{(2j +1)(2k +1) -1}{4}}, $$
    % Formula (8)
    $$ -\sum_{\ell \geq 1} \sum_{i=1}^{\ell} (6i - 2) q^{\frac{(2\ell + 1)(6\ell - 1) - (2i - 1)(6i - 1)}{4}} \
     = \sum_{(j,k) \in S_8} (3j - k +1) q^{\frac{(2j +1)(2k +1) -1}{4}}.$$

 Next we notice that for any symmetric polynomial $\sigma(x,y)$ we have 
\bea \sum_{j,  s = 0} ^{\infty}    j  q^{   \sigma(j,s)  }  =  \sum_{j,  s = 0} ^{\infty}    s  q^{ \sigma(j,s)  }  . \label{sim} \eea
 
This implies that 
\bea \mathcal B[q] & =& \sum_{j, k = 0} ^{\infty}  (3j - k  +1) q^{\frac{ (2 j +1) (2k +1) -1 }{4}  } -   \sum_{j = 0} ^{\infty}   \sum_{s= 0} ^{\infty}   (3j -  (3 s +1)  +1) q^{\frac{ (2 j +1) (2(3 s  +1) ) -1 }{4}  }   \nonumber \\
 &=&   \sum_{j, k = 0} ^{\infty}   (3j - k  +1)   q^{\frac{ (2 j +1) (2k +1) -1 }{4}  }  - \sum_{j,  s = 0} ^{\infty}   3 (j - s)  q^{\frac{  3 (2 s +1) (2 j  +1) -1 }{4}  }  \nonumber \\
% &&( \mbox{by applying} \ (\ref{sim}) ) \nonumber \\
 &\underbrace{=}_{{\rm by} \ (\ref{sim})}&   \sum_{j, k = 0} ^{\infty}   (2 k   +1)   q^{\frac{ (2 j +1) (2k +1) -1 }{4}  } =  \Delta ( q^{1/2 } ) ^4. \nonumber \eea
\end{proof}

\begin{lemma} \label{identitet-drugi}  The decomposition (\ref{m2-1}) holds at the level of characters.
\end{lemma}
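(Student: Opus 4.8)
The plan is to mirror the proof of Lemma~\ref{identitet-prvi}, replacing the combinatorics attached to $p=3$ by the one attached to $p=4$. First I would record a closed theta-type expression for the left-hand side ${\rm ch}[L_{-3/2}(\mathfrak{sl}_3)]$; since $k=-3/2$ is admissible for $\mathfrak{sl}_3$ (here $k+h^\vee=\tfrac32$), such a formula is available from the admissible character formula, or from a Verma-module resolution of the type invoked for Lemma~\ref{karakteri} (cf.\ \cite{Mal}). The normalization is already forced by Lemma~\ref{karakteri}: the two families there carry prefactors $q^{25/48}$ and $q^{-3/16}$ and denominators $(q)_\infty^{-1}$, $(q)_\infty^{-3}$, so the right-hand side equals $q^{1/3}(q)_\infty^{-4}\,\mathcal B(q)$ for a suitable doubly-indexed series $\mathcal B(q)$, and the left-hand side must take the matching shape $q^{1/3}(q)_\infty^{-4}\,\mathcal N(q)$; note that $c(L_{-3/2}(\mathfrak{sl}_3))=-8$ reproduces exactly the prefactor $q^{1/3}$.

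Next I would expand the right-hand side with the two character formulas of Lemma~\ref{karakteri} for $c=c_{1,4}=-\tfrac{25}{2}$ and $k=-6$. Splitting the sum over $\ell$ into even and odd $\ell$ feeds the two $\widehat{\mathfrak{sl}}_2$ character formulas, producing four alternating double sums whose summation coefficients are $8i+1$, $8i-1$, $8i-3$, $8i-5$, multiplied against the two Virasoro theta terms $q^{h^{1,4}_{1,\ell+1}}-q^{h^{1,4}_{1,-\ell-1}}$, where $h^{1,4}_{1,i}=\tfrac{(i-1)(2i+1)}{2}$ and the relevant Virasoro weight in the $\ell$-th summand is $\tfrac{\ell(2\ell+3)}{2}=h^{1,4}_{1,\ell+1}$. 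After clearing the common factor $q^{1/3}(q)_\infty^{-4}$ the whole statement collapses to a single $q$-series identity $\mathcal N(q)=\mathcal B(q)$, exactly parallel to the reduction to $\Delta(q^{1/2})^4=\mathcal B[q]$ in the proof of Lemma~\ref{identitet-prvi}.

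To prove $\mathcal N(q)=\mathcal B(q)$ I would use the same reindexing device. One records the master theta identity for $p=4$ (the analogue of Legendre's four-triangular-number identity $\Delta(q)^4=\sum_{j,k\ge0}(2k+1)\,q^{((2j+1)(2k+1)-1)/2}$ used for $p=3$) as a single sum over a pair $(j,k)\in\mathbb Z_{\ge0}^2$, removes the forbidden progression (or union of progressions) of the form $\{(j,4s+r)\}$, and partitions the remaining lattice into residue classes. Each class is then matched bijectively with one block of $\mathcal B(q)$ through a linear substitution $k=4(\ell\pm i)+\mathrm{const}$, $j=\ell\mp i+\mathrm{const}$, precisely as in the table of Lemma~\ref{identitet-prvi}; a final appeal to the symmetry $\sum_{j,s}j\,q^{\sigma(j,s)}=\sum_{j,s}s\,q^{\sigma(j,s)}$ for symmetric $\sigma$ (equation~(\ref{sim})) collapses the surviving cross terms and returns $\mathcal N(q)$.

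The hard part is twofold. Unlike the $p=3$ case, where $M_{(2)}$ has an immediate free-field product character and only the combinatorial identity requires work, here one must first pin down the closed form $\mathcal N(q)$ of ${\rm ch}[L_{-3/2}(\mathfrak{sl}_3)]$ together with the correct master theta identity replacing Legendre's. Second, the residue-class bookkeeping is heavier: the congruences are now modulo $4$ rather than $3$, so I expect more than the eight cases of Lemma~\ref{identitet-prvi}, and checking that they tile the lattice and align in sign with the four blocks of $\mathcal B(q)$ is the delicate, error-prone step. Once the partition and the two-variable substitutions are fixed, the remaining manipulations are routine and follow Lemma~\ref{identitet-prvi} verbatim.
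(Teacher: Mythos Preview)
Your plan follows the paper's proof in structure, but the two points you label ``hard'' are precisely where the argument simplifies, and not seeing this leaves the plan without its key inputs.

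First, you do not need an admissible character formula or a Verma resolution to find $\mathcal N(q)$: the paper uses the product formula
\[
{\rm ch}[L_{-3/2}(\mathfrak{sl}_3)]
= q^{-c/24}\,\frac{(q^2;q^2)_\infty^{8}}{(q;q)_\infty^{8}}
= \frac{q^{1/3}}{(q)_\infty^{4}}\,\Delta(q)^{4},
\qquad \Delta(q)=\frac{(q^2)_\infty^{2}}{(q)_\infty}=\sum_{n\ge 0}q^{n(n+1)/2},
\]
for the Deligne-series level $k=-3/2$. So your numerator $\mathcal N(q)$ is literally $\Delta(q)^4$ again.

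Second---and this is the point you miss entirely---there is no new master theta identity for $p=4$: it is the \emph{same} Legendre four-triangular-number formula
\[
\Delta(q)^4=\sum_{j,k\ge 0}(2k+1)\,q^{\frac{(2j+1)(2k+1)-1}{2}},
\]
now read at $q$ rather than at $q^{1/2}$. After expanding the right-hand side of (\ref{m2-1}) as $\frac{q^{1/3}}{(q)_\infty^4}\,\mathcal C[q]$ with the four $(8i\pm\cdot)$-blocks exactly as you describe, the same reindexing device as in Lemma~\ref{identitet-prvi} yields
\[
\mathcal C[q]=\sum_{j,k\ge 0}(4j-2k+1)\,q^{\frac{(2j+1)(2k+1)-1}{2}},
\]
and the symmetry trick (\ref{sim}) (here $4j-2k+1-(2k+1)=4(j-k)$) collapses this to $\Delta(q)^4$ directly. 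So the residue-class bookkeeping is not heavier than for $p=3$; the target identity is literally the same Legendre sum, and the passage from $p=3$ to $p=4$ amounts to replacing $q^{1/2}$ by $q$.
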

 
 \begin{proof}
The character of $ L_{-3/2} (\frak{sl}_3)$ is
\bea
{\rm ch}[L_{-3/2} (\frak{sl}_3)]  &= & q^{-c/24 } \frac{ ( q^2)_\infty^8 }{(q)_\infty^8 }=  q^{1/3 } \frac{1}{(q)_\infty^ 4} \left( \frac{ (q^2)_\infty^2 }  {(q)_\infty} \right) ^4   =  q^{1/3 }  \frac{1}{(q)_\infty^ 4}  \Delta  (q) ^ 4, \nonumber 
\eea
where by a Gauss' identity 
$$\Delta  (q) := \sum_{ n \in {\Z}_{\ge 0} } q^ { n (n+1) /2} =\frac{  (q^2 )_\infty^2 }{ (q)_\infty}.$$

Using  Lemma  \ref{karakteri}  we get that the character of the right side of (\ref{3/2-dec}) is given by
$  \frac{q^{1/3} }{(q)_\infty^4}   \mathcal C [q]  $
where
\bea
\mathcal C[q]   :=  && \sum_{\ell   = 0}^{\infty}   \sum_{i= 0}^{\ell}  (8 i + 1)    \left(  
  q^{\ell (4 \ell  + 3) - i (4 i + 1)}   - 
   q^{ (\ell + 1) (4 \ell + 1 ) - i (4 i + 1)} \right)  \nonumber  \\
   +&&  \sum_{\ell   = 1}^{\infty}   \sum_{i= 1}^{\ell}  (8 i - 1)    \left(  
  -q^{\ell (4 \ell  + 3) - i (4 i - 1)}   + 
   q^{ (\ell + 1) (4 \ell + 1 ) - i (4 i - 1)} \right)  \nonumber  \\
  + &&    \sum_{\ell   = 1}^{\infty}  \sum_{i=1} ^{\ell}     ( 8  i  - 3)  \left(  q^{ \frac{ (2 \ell -1) (4 \ell  +1) -  (2i -1)    ( 4 i - 1)}{2  } }  - q^{ \frac{ (2 \ell + 1) (4 \ell  - 1) -  (2i -1)    ( 4 i - 1)}{2  } } \right) \nonumber  \\
  + &&    \sum_{\ell   = 1}^{\infty}  \sum_{i=1} ^{\ell}     ( 8  i  - 5)  \left( - q^{ \frac{ (2 \ell -1) (4 \ell  +1) -  (2i -1)    ( 4 i - 3)}{2  } }   + q^{ \frac{ (2 \ell + 1) (4 \ell  - 1) -  (2i -1)    ( 4 i - 3)}{2  } } \right) . \nonumber 
 \eea
Therefore  the  proof of the theorem is now reduced to  the following $q$-series identity:
\bea  \label{identity} &&  \frac{ (q^2)^{8}_\infty } {(q)_\infty^4 } = \Delta ( q  ) ^4    = \mathcal C[q].  \nonumber  \eea

%By  \cite{KW-1994} we have that
%$$ \Delta(q) ^4 = \sum_{j, k = 0} ^{\infty}  (2 k +1) q^{\frac{ (2 j +1) (2k +1) -1 }{2}  } $$
 
 Using similar calculation as  in previous section, we prove that 
 $$ \mathcal C[q] = \sum_{j,k =0} ^{\infty} ( 4j - 2k +1)  q^{\frac{ (2 j +1) (2k +1) -1 }{2}  }  =   \sum_{j, k = 0} ^{\infty}  (2 k +1) q^{\frac{ (2 j +1) (2k +1) -1 }{2}  }  =   \Delta(q) ^4. $$
 The proof follows.
 \end{proof}

\section{MLDE for $\mathcal C_p$}
\label{MLDE}

We proved that for any  $n \geq 2$,  we have a  simple  vertex algebra structure on the module
  \[
 \mathcal{C}_n=  \bigoplus_{\ell =0} ^{\infty} L_{\widehat{\frak{sl}_2}} ((-n-2 + n \ell  ) \Lambda_0 +  n \ell  \Lambda_1) \bigotimes    
  L_{\widehat{\frak{sl}_2}} (- \left(\frac{2n-1}{n}  +  \ell  \right) \Lambda_0 +  \ell  \Lambda_1),
  \]
  of central charge $c_n=\frac{6(1+n-n^2)}{n}$.
  This vertex algebra is $\frac12 \mathbb{Z}_{\geq 0}$-graded for $n$ even and $\mathbb{Z}_{\geq 0}$-graded for $n$ odd.
  
 To compute the characters of modules $L_{\widehat{\frak{sl}_2}} ( (-n-2 + n \ell  ) \Lambda_0 +  n \ell  \Lambda_1)$, and thus of $\mathcal{C}_n$, we proceed as earlier in 
 the cases of $n=3$ and $n=4$ using \cite[Theorem A]{Mal}. 
 This, combined with a well-known character formula:
 $${\rm ch}[L_{\widehat{\frak{sl}_2}} (- \left(\frac{2n-1}{n}  +  \ell  \right) \Lambda_0 +  \ell  \Lambda_1)]=q^{\frac{(2n-1)}{8}} \frac{(\ell+1)q^{\frac14 n(\ell+2)\ell}}{(q)_\infty^3},$$
 using the decomposition above, gives the following result.
 \begin{theorem} \label{character-full} For $n \geq 2$, we have
 \begin{align*} (q)_\infty^6 q^{c/24} {\rm ch}[ \mathcal{C}_n](q)= & \sum_{i \geq 0, k \geq 0} (1+2(i+k))(1+2in)q^{k(k+1)n+i(2kn+n-1)}  \\
 & -   \sum_{i \geq 0, k \geq 0}  \left(3+2(i+k))(1+2in)q^{k(1+k)n+(i+1)(1+n+2kn)}\right) \\
& +   \sum_{i \geq 0, k \geq 0}   2 (1 + i + k) (1 + n + 2 i n) q^{
 \frac12 + (k^2-\frac12) n + (1 + i) (-1 + n + 2 k n)} \\
 & -  \sum_{i \geq 0, k \geq 0} 2 (1 + i + k) (-1 + n + 2 i n) q^{-\frac12 + (-\frac12+ k^2) n + (1 + i) (1 + n + 2 k n)}.
 \end{align*}
 \end{theorem}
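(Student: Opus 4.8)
The plan is to read ${\rm ch}[\mathcal{C}_n]$ off the decomposition established in Theorems~\ref{main1} and \ref{general-1}, namely
\[
\mathcal{C}_n = \bigoplus_{\ell=0}^{\infty} L_{\widehat{\frak{sl}_2}}\bigl(-(2+n+n\ell)\Lambda_0 + n\ell\Lambda_1\bigr)\otimes L_{\widehat{\frak{sl}_2}}\bigl(-(\tfrac{2n-1}{n}+\ell)\Lambda_0 + \ell\Lambda_1\bigr),
\]
and to use that characters are multiplicative on tensor products, so that ${\rm ch}[\mathcal{C}_n]=\sum_{\ell\ge0}{\rm ch}[\text{first}_\ell]\,{\rm ch}[\text{second}_\ell]$. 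For the second factor I would insert the closed product formula quoted just above the theorem; its prefactor $q^{(2n-1)/8}$ equals $q^{-c_2/24}$ for the Sugawara charge $c_2=3-6n$ of $L_{-2+1/n}(\frak{sl}_2)$, and the factor $(\ell+1)$ is $\dim V_{\frak{sl}_2}(\ell)$. For the first factor, which sits at the negative integer level $-n-2$, I would compute the character by the two-sided resolution of \cite{Mal} exactly as was done for $n=3,4$ in Lemma~\ref{karakteri}: this gives $q^{-c_1/24}(q)_\infty^{-3}$ times a finite alternating theta-type sum $\mathcal{N}_\ell$ whose form depends on the parity of $\ell$, e.g.
\[
\mathcal{N}_{2m}=\sum_{i=0}^{m}(2ni+1)q^{-i(ni+1)}-\sum_{i=1}^{m}(2ni-1)q^{-i(ni-1)},
\]
with an analogous half-integer-exponent expression for odd $\ell$ (linear coefficients $2ni-(n-1)$ and $2ni-(n+1)$).

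Since $c_n=c_1+c_2$ and each tensor factor contributes one power of $(q)_\infty^{-3}$, multiplying the two characters and clearing denominators yields the clean normalization
\[
(q)_\infty^{6}\,q^{c/24}\,{\rm ch}[\mathcal{C}_n]=\sum_{\ell=0}^{\infty}(\ell+1)\,q^{\,n\ell(\ell+2)/4}\,\mathcal{N}_\ell .
\]
I would then split this sum by the parity of $\ell$ and, within each parity, by the two theta-pieces of $\mathcal{N}_\ell$, obtaining four double sums over $(\ell,i)$. Each is reindexed by the pair $(i,k)$ of the statement: writing even $\ell=2(i+k)$, the positive piece becomes the first sum under $j\mapsto i$, $m\mapsto i+k$, and the negative piece becomes the second sum under the shift $j\mapsto i+1$, $m\mapsto i+k+1$; the two odd-$\ell$ pieces produce the two half-integer-exponent sums after the analogous shift. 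Expanding the resulting quadratic exponents and the products of the linear factors then matches the four sums of the statement term by term.

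The main obstacle will be the bookkeeping in this reindexing. One must track the shifts carefully enough that the triangular ranges $0\le i\le m$ and $1\le i\le m$ combine with the outer summation over $m$ to reproduce exactly the free ranges $i,k\ge0$ with no leftover boundary contributions, and simultaneously check that the quadratic exponents and the linear-in-$(i,k)$ coefficients emerge in precisely the stated $n$-dependent form. This is the same mechanism that drives the case table in Lemma~\ref{identitet-prvi}, but organized here as a four-fold (parity $\times$ sign) split rather than an eight-case partition, and — unlike the identity lemmas — no appeal to the symmetrization (\ref{sim}) is needed, since here we only assemble the character and do not reduce it to a product. The sole genuinely new input beyond the $n=3,4$ computations is the uniform-in-$n$ shape of $\mathcal{N}_\ell$ furnished by \cite{Mal}, which I would record once and then specialize.
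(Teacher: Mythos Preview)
Your proposal is correct and follows essentially the same approach as the paper. The paper's own argument is a one-paragraph sketch: compute the first tensor factor via Malikov's resolution as in the $n=3,4$ cases, insert the quoted closed formula for the second factor, and combine using the decomposition of $\mathcal{C}_n$. Your plan does exactly this, and you supply more detail on the parity split and the $(i,k)$ reindexing than the paper does; your caveat that the bookkeeping is the main hurdle is well placed, since the precise assignment of the four pieces to the four displayed sums requires some care with shifts and signs.
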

 Observe that we have theta functions of signature $(1,1)$-indefinite appearing in the formula above, 
  making it unclear whether ${\rm ch}[\mathcal{C}_n]$ is modular. For a detailed discussion on the modularity of these series, see the final section.

Recall the the spaces of classical holomorphic modular forms for $\Gamma(1)=SL(2,\mathbb{Z})$ and for the principal congruence subgroup $\Gamma(2)$. There are given by (as graded rings):
\[
\bigoplus_{k=0}^{\infty} M_k(\Gamma(1)) = \mathbb{C}[E_4,E_6],
\]
where $E_{2k}=-\frac{B_{2k}}{(2k)! }+ 
 \frac{2}{(2k - 1)!} \sum_{n \geq 1} \frac{n^{2k-1} q^n}{1-q^n}$ are Eisenstein's series, and by
 \[
\bigoplus_{k=0}^{\infty} M_k(\Gamma(2)) = \mathbb{C}[\theta_2(\tau)^4, \theta_3(\tau)^4],
\]
where $\theta_2(\tau)=\sum_{n \in \mathbb{Z}+\frac12} q^{\frac12 n^2}$ and $\theta_3(\tau) = \sum_{n \in \mathbb{Z}}q^{\frac12 n^2}$ are Jacobi's theta functions.
Let
\[
\Theta_{r,s}(\tau) := \theta_2(\tau)^{4r}\theta_3(\tau)^{4s},
\]
so that $\Theta_{r,s}(\tau)$, with $2r+2s=2k$ span $M_{2k}(\Gamma(2))$.
%\[
%\widetilde{\Theta}_{r,s}(\tau) := (-1)^{r+s}\left( \theta_4(\tau)^{4r}\theta_3(\tau)^{4s} + \theta_4(\tau)^{4s}\theta_3(\tau)^{4r} \right), \quad r \leq s.
%\]
Then we introduce Serre's $q$-derivatives:
\begin{equation}
\partial_{(k)} f(q) = \left( q \frac{d}{dq} + k E_2(q) \right) f(q),
\end{equation}
and
\begin{equation}
D_q^{(k)} f(q) := \partial_{(2k-2)} \circ \cdots \circ \partial_{(2)} \circ \partial_{(0)} f(q), 
\end{equation}
where we set  $D_q^{(0)} f(q) := f(q)$. 
We are interested in MLDEs of a particular type $\mathcal{D}_q^{(k)} f=0$, where
\begin{equation}
\mathcal{D}_q^{(k)} \equiv D_q^{(k)} + \sum_{r=1}^{k} f_r(q) D_q^{(k-r)}, \quad f_r(q) \in M_{2r}({\Gamma}), 
\end{equation}
where $\Gamma = \Gamma(1)$ or $\Gamma(2)$ depending on the parity of $n$. This type of equations have been studied in the context of vertex algebra characters for a long time (see for instance  \cite{Milas}), and also in connection with Schur's indices of 4d $N=2$ SCFTs  \cite{Beem}.

 Next result would follow if $\mathcal{C}_n$ is quasi-lisse, essentially using the same argument as in \cite{AK}.
 \begin{conj}
 For $n \geq 2$, the character ${\rm ch}[\mathcal{C}_n]$ satisfies an MLDE with holomorphic coefficients for $M_*(\Gamma(1))$ for $n$ odd and with $M_*(\Gamma(2))$ for $n$ even.
 \end{conj}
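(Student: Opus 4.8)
The plan is to deduce the statement from the theorem of Arakawa and Kawasetsu \cite{AK}, which asserts that the normalized character of a quasi-lisse vertex operator algebra is a component of a vector-valued modular form and hence satisfies a monic modular linear differential equation of finite order. The entire content of the conjecture then reduces to two points: first, that $\mathcal{C}_n$ is quasi-lisse for \emph{every} $n \geq 2$; and second, the identification of the relevant modular group as $\Gamma(1)$ or $\Gamma(2)$ according to the parity of $n$. For $n \in \{2,3,4,5\}$ quasi-lisseness is already established in the preceding sections, via the identifications with $M_{(3)}$, with $L_{-5/3}(G_2)$, and with the $W$-algebras of types $F_4$ and $E_8$, each obtained by Drinfeld--Sokolov reduction from an admissible affine vertex algebra, for which quasi-lisseness is automatic. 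The first and decisive step is therefore to remove the restriction $n \leq 5$.

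For general $n$ I would attempt to control the associated variety $X_{\mathcal{C}_n} = \mbox{Specm}(R_{\mathcal{C}_n})$ directly through the inverse quantum Hamiltonian reduction presentation $\mathcal{C}_n = (\mathcal{V}_n \otimes \Pi(0)^{1/2})^{\mathrm{int}_{\g_0}}$ of Theorem \ref{general-1}. The idea is to use Arakawa's compatibility between Drinfeld--Sokolov reduction and associated varieties (nilpotent Slodowy slices), together with the known associated variety of the chiral differential operator algebra $\mathcal{D}^{ch}_{SL_2}$, whose reduced arc algebra is supported on $T^*SL_2 \cong \g^* \times SL_2$. Since passing to the maximal $\g_0$-integrable part and adjoining the $\Pi(0)^{1/2}$ factor each have a transparent effect on the $C_2$-algebra, one expects $X_{\mathcal{C}_n}$ to be a finite union of symplectic leaves inherited from this geometry, uniformly in $n$. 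The genuine difficulty is that the tensor factor $L_{-2-p}(\mathfrak{sl}_2)$ sits at the non-admissible negative integer level $k = -2-p$, so the clean admissibility argument available for $n \leq 5$ fails here; a careful analysis of the Zhu $C_2$-algebra of $\mathcal{C}_n$, or an independent proof that its associated variety is contained in the nilpotent cone of a fixed ambient space, appears to be required. \textbf{This is the main obstacle.}

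Once quasi-lisseness is in hand, the second step is routine. The Arakawa--Kawasetsu argument produces an MLDE whose coefficients lie in the ring of holomorphic modular forms for the group under which the character transforms, and whose order is bounded by the dimension of the finite-dimensional space spanned by the character together with its images under the iterated Serre derivatives $D_q^{(k)}$; finiteness of this span is exactly what the finitely-many-symplectic-leaves condition guarantees. The modular group is then dictated by the grading recorded in Theorem \ref{general-1} and in the explicit character formula of Theorem \ref{character-full}: for $n$ odd, $\mathcal{C}_n$ is $\mathbb{Z}_{\geq 0}$-graded, so $q^{L(0)-c/24}$ carries integral exponents up to the overall shift and the character is covariant under the full modular group $\Gamma(1) = SL(2,\mathbb{Z})$, forcing the coefficient forms into $M_*(\Gamma(1)) = \mathbb{C}[E_4, E_6]$; for $n$ even, $\mathcal{C}_n$ is only $\frac12\mathbb{Z}_{\geq 0}$-graded, the half-integral powers of $q$ break the invariance down to the subgroup $\Gamma(2)$ on which $\tau \mapsto \tau + 2$ resolves the ambiguity, so the coefficients lie in $M_*(\Gamma(2)) = \mathbb{C}[\theta_2(\tau)^4, \theta_3(\tau)^4]$.

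A final remark on an alternative route. One could instead try to prove the conjecture \emph{unconditionally} by a direct attack on the formula of Theorem \ref{character-full}, showing by hand that the series is annihilated by an operator $\mathcal{D}_q^{(k)}$ of the prescribed type. The obstruction there is precisely the appearance of signature $(1,1)$ indefinite theta functions in that formula: such series are \emph{a priori} only mock-modular, and one would have to verify that the non-holomorphic shadow contributions cancel in the specific linear combinations that occur. I regard the quasi-lisse route as more promising, since there modularity is guaranteed structurally and the analytic subtleties of the indefinite theta functions are bypassed.
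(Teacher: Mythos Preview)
The statement you are attempting to prove is labelled a \emph{Conjecture} in the paper, and the paper offers no proof of it. What the paper does say, immediately before the statement, is precisely your first paragraph: ``Next result would follow if $\mathcal{C}_n$ is quasi-lisse, essentially using the same argument as in \cite{AK}.'' So your strategy coincides with the paper's own heuristic, and you have correctly located the gap: quasi-lisseness of $\mathcal{C}_n$ for general $n$ is \emph{not} established anywhere in the paper and is in fact recorded as a separate open conjecture in the final section. Your sketch of how one might attack the associated variety via the inverse QHR presentation is reasonable speculation, but it is not a proof, and you are right to flag the non-admissible level $-2-p$ as the genuine obstruction.

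One correction to your final remark. You suggest that a direct attack on the character formula would have to contend with mock-modularity coming from the signature $(1,1)$ indefinite theta functions. In fact the paper resolves exactly this point in Section~9: via an Appell--Lerch identity (Proposition~\ref{explicit}) the indefinite series is rewritten in closed form as a ratio of ordinary Jacobi theta functions, and Theorem~\ref{explicit-char} concludes that ${\rm ch}[\mathcal{C}_n]$ is an honest modular form of weight $0$, with no shadow. However, modularity of a single weight-$0$ function does \emph{not} by itself produce a monic MLDE with holomorphic coefficients in $M_*(\Gamma)$; one still needs the finite-dimensionality of the span under iterated Serre derivatives, which is what quasi-lisseness would supply. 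So even with the paper's modularity theorem in hand, the conjecture remains open, and your assessment that the quasi-lisse route is the more promising one is consistent with the paper's own stance.
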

 In support fo this conjecture we present a few examples where we know that $\mathcal{C}_n$ is quasi-lisse:
 \begin{enumerate}
\item The lowest order MLDE for ${\rm ch}[\mathcal{C}_2]$ is
$$D_q^{(1)} f(q)- \left( \frac{1}{8}\Theta_{1,0}+\frac{1}{8}\Theta_{0,1} \right)f(q)=0.$$
 \item The lowest order MLDE for ${\rm ch}[\mathcal{C}_3]$ is 
  $$D_q^{(2)} f(q)- 75 E_4 f(q)=0.$$
 \item The lowest order MLDE for  ${\rm ch}[\mathcal{C}_4]$ is
 \begin{align*}  
 & D_q^{(3)} f(q)+(-\frac{5}{16} \Theta_{1,0}+\frac{5}{16} \Theta_{0,1}) D_q^{(2)} f(q) +\left(-\frac{77}{2304} \Theta_{2,0}-\frac{89}{1152}\Theta_{1,1}-\frac{17}{2304} \Theta_{0,2} \right) D_q^{(1)} f(q) \\ 
 & +\left(\frac{33}{4096}\Theta_{3,0}+\frac{33}{4096} \Theta_{2,1}-\frac{197}{4096} \Theta_{1,2}+
 \frac{3}{4096} \Theta_{0,3}\right) f(q)=0.
 \end{align*}
  \item The lowest order MLDE for ${\rm ch}[\mathcal{C}_5]$ is
  \begin{align*}
& D_q^{(6)} f(q)-161 E_4 D_q^{(4)} f(q)-\frac{28812}{75} E_6 D_q^{(3)} f(q) -\frac{8965187}{75} E_8 D_q^{(2)} f(q) - \frac{192787364}{125} E_{10}D_q^{(1)} f(q) \\
& + \left( - \frac{5599287}{5} E_4^3 - \frac{48993336}{25} E_6^2 \right)f(q)=0.
 \end{align*}
  \end{enumerate}
 % This is checked using the explicit formula for characters as in Theorem 

 \section{Modularity of the character of $\mathcal{C}_n$}
 Here we discuss modular properties of characters of  $\mathcal{C}_n$. To make formulas more transparent, we multiply 
 the character with $\eta(\tau)^6$ so that only the indefinite-theta function remains, i.e. we are interested in 
 modular properties of
  $$A_n(\tau):= \eta(\tau)^{6} {\rm ch}[\mathcal{C}_n].$$
 
 We first start with the formula obtained earlier.
  \begin{align*} (q)_\infty^6 q^{c/24} {\rm ch}[ \mathcal{C}_n](q)= & \sum_{i \geq 0, k \geq 0} (1+2(i+k))(1+2in)q^{k(k+1)n+i(2kn+n-1)}  \\
 & -   \sum_{i \geq 0, k \geq 0}  \left(3+2(i+k))(1+2in)q^{k(1+k)n+(i+1)(1+n+2kn)}\right) \\
& +   \sum_{i \geq 0, k \geq 0}   2 (1 + i + k) (1 + n + 2 i n) q^{
 \frac12 + (k^2-\frac12) n + (1 + i) (-1 + n + 2 k n)} \\
 & -  \sum_{i \geq 0, k \geq 0} 2 (1 + i + k) (-1 + n + 2 i n) q^{-\frac12 + (-\frac12+ k^2) n + (1 + i) (1 + n + 2 k n)}.
 \end{align*}

 Then, after manipulating the $q$-series and organizing terms based on whether $i$ is even or odd, the RHS of the formula yields
$$(q)_\infty^6 q^{c/24} {\rm ch}[ \mathcal{C}_n](q)= \left( \sum_{i \geq 0, k \geq 0}-  \sum_{i < 0, k < 0} \right)  (1+i+2k))(1+in)q^{k(k+1)n+\frac{i}{2}(2kn+n-1)}.$$ 
Using $c=6(1+\frac{1}{n}-n)$, we multiply the above equation with $q^{-c/24+1/4}=q^{-\frac{1}{4n}+\frac{n}{4}}$, obtaining
$$A_n(\tau)=\left(\sum_{\ell_1,\ell_2 \geq 0}-\sum_{\ell_1,\ell_2 <0}\right)(1+2 \ell_1+\ell_2)(1+n \ell_2)q^{ n\left(\ell_1+\frac{n-1}{2n}\right)^2+n \left(\ell_1+\frac{n-1}{2n}\right)\left(\ell_2+\frac1n\right)} .   $$

 %Also, since our character is 
 %Next results rewrites the equation as an indefinite theta function of signature $(1,1)$. However this formula 
 
 \begin{remark} \label{remark.indefinite}
 
 Now letting $Q(x_1,x_2)=nx_1^2+n x_1 x_2$, 
 %so that   $A=\begin{pmatrix} 2n & n \\ n & 0 \end{pmatrix}$ and $\lambda=(\frac{n-1}{2n},\frac{1}{n}) \in A^{-1} \mathbb{Z}^2/\mathbb{Z}^2$ (for $n$ even we have to take $\tau \to 2 \tau$ first).
%$$A_n(\tau)=\left(\sum_{\ell_1,\ell_2 \geq 0} - \sum_{\ell_1,\ell_2 < 0}\right)  (1+i+2k)(1+in)q^{Q(\bm \ell +\lambda)}$$
%$$=\frac12 \sum_{\ell \in \mathbb{Z}^2 + (\frac14,\frac14)} \left(sgn(i+k+\frac14)-sgn(i+\frac14) \right) (1+i+2k)(1+in)q^{Q(n +\lambda)}$$
%where $\ell=(\ell_1,\ell_2)$, $Q(x_1,x_2)=nx_1^2+n x_1 x_2$,
$A=\begin{pmatrix} 2n & n \\ n & 0 \end{pmatrix}$, and $\lambda=(\frac{n-1}{2n},\frac{n+1}{2n}) \in A^{-1} \mathbb{Z}^2/\mathbb{Z}^2$
(this is true only for $n$ odd but; for $n$ even, we first have to take $\tau \to 2 \tau$). With these shifts we can write $A_n(\tau)$ given above as
$$A_n(\tau)=\frac{n}{2} \sum_{(\ell_1,\ell_2) \in \mathbb{Z}^2 + \left(\frac{n-1}{2n},\frac1n \right)} \left( {\rm sgn}(\ell_1)+{\rm sgn}(\ell_2) \right) p(\ell_1,\ell_2) q^{n \ell_1^2+n \ell_1 \ell_2},$$
where $p(\ell_1,\ell_2)= \ell_2(2\ell_1+\ell_2)$. 
It is easy to verify that the polynomial $\ell_2(2\ell_1+\ell_2)$ is $A$-harmonic.
There is yet another expression for $A_n(\tau)$, given by
$$A_n(\tau)=\sum_{\ell_1 \equiv \ell_2 \mod 2 \atop  0 \leq |\ell_2| \leq \ell_1; \ell_1 \geq 0; \ell_2 \in \mathbb{Z}} (1+\ell_1)(1+n \ell_2) q^{\frac{n}{4}(\ell_1+1)^2-\frac{n}{4}(\ell_2+\frac1n)^2},$$
 but this will not be used in the rest of the paper.
\end{remark}

\subsection{Appell-Lerch series} To determine modular properties of $A_n(\tau)$ we need a few facts about generalized Appell-Lerch series $\kappa_\ell(x,y,q)$.
Let (here $x=e(z_1)$ and $y=e(z_2)$)
\begin{equation} \label{sum-AL}
\kappa_\ell(x,y,q)=\sum_{m \in \mathbb{Z}} \frac{q^{\frac{\ell m^2}{2}} x^{\ell m }}{1-x y q^m},
\end{equation}
 with a double series representation (in a certain domain):
$$\kappa_\ell(x,y,q)=\left( \sum_{i,j \geq 0} -\sum_{i,j<0} \right) x^{i+\ell j} y^{i} q^{ \frac{\ell}{2} j^2+i j }.$$ 
Consider now $\kappa_2(x,y,q)$ with shifts and delations along $x$ and $y$ variables 
$$B_n(x,y,q):=\kappa_2(q^{\frac{n}{2}}x,q^{-\frac12} y^n,q^n).$$
Then we have
$$ q^{-\frac{1}{4n}+\frac{n}{4}} \frac{\partial^2}{\partial x \partial y} \left( xy B_{n}(x,y,q) \right)|_{x=y=1} =A_n(\tau).$$

Some special linear combinations of the Appell-Lerch series can
be expressed through theta functions. For $\kappa_2$ this formula is quite nice and reads (see for instance \cite{KW-94,STT})
\begin{equation} \label{appell}
\kappa_2(x,y,q)-\kappa_2(x^{-1},y,q)=- \frac{(q;q)_\infty^3 \theta(x^2;q)}{\theta(xy;q) \theta(xy^{-1};q)}.
\end{equation}
%The formula looks slightly nicer if we multiply with $xy$. Then we get 
%$$q^{\frac{1}{8}} xy(\kappa_2(x,y,q)-\kappa_2(x^{-1},y,q))=- \frac{\eta(\tau)^3 \vartheta(x^2;q)}{\vartheta(xy;q) \vartheta(xy^{-1};q)},$$
%where $\vartheta(x;q)=\sum_{n \mathbb{Z}} q^{\frac12(n+\frac12)^2} x^{n+\frac12}$, which is a Jacobi form of weight $\frac12$.
where $\theta(x;q)=\sum_{n \in \mathbb{Z}} (-1)^n q^{\frac{n^2}{2}+\frac{n}{2}} x^n$.
Differentiation of (\ref{sum-AL}), and after specializing at $x=y=1$, yields the following identity
$$ \frac{\partial^2}{\partial x \partial y} \left(xy \kappa_{2}(q^{-n/2} x^{-1},q^{-1/2} y^n,q^n) \right)|_{x=y=1}=- \frac{\partial^2}{\partial x \partial y}\left( xy B_{n}(x,y,q) \right)|_{x=y=1}.$$
Using this relation, together with (\ref{appell}), and some manipulations with $q$-series gives 
\begin{proposition} \label{explicit}
%$$A_n(\tau)=-\frac12 \frac{\partial^2}{\partial x \partial y} \left( xy q^{-\frac{n}{8}} \frac{\eta(n \tau)^3 \theta(q^n x^2;q^n)}{\theta(xyq^{\frac{n}{2}-\frac12};q^n) \theta(q^{\frac{n}{2}+\frac12}xy^{-1};q^n)} %\right) \bigg|_{x=y=1}.$$
%Rewriting in terms of  gives 
We have for $n \geq 2$, 
$$A_n(\tau)=\frac12 \frac{\partial^2}{\partial x \partial y} \left( \frac{\eta(n \tau)^3 \vartheta_{n,n}(x^2)}{\vartheta_{n,2n-1}(xy^n) \vartheta_{n,2n+1}(xy^{-n})} \right) \bigg|_{x=y=1},$$
where $\vartheta_{n,a}(x;q)=\sum_{n \in \mathbb{Z}} (-1)^k q^{\frac{n}{2}(k+\frac{a}{2n})^2} x^{k+\frac{a}{2n}}$, is a Jacobi form of weight $\frac12$.
\end{proposition}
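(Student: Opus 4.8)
The plan is to build directly on the two identities established immediately before the statement. The first is the relation
$$A_n(\tau)=q^{-\frac{1}{4n}+\frac{n}{4}}\,\frac{\partial^2}{\partial x\partial y}\bigl(xy\,B_n(x,y,q)\bigr)\Big|_{x=y=1},\qquad B_n(x,y,q)=\kappa_2\bigl(q^{n/2}x,\,q^{-1/2}y^n,\,q^n\bigr),$$
and the second is the differentiation identity which states that applying $\tfrac{\partial^2}{\partial x\partial y}(xy\,\cdot)|_{x=y=1}$ to $\kappa_2(q^{-n/2}x^{-1},q^{-1/2}y^n,q^n)$ returns minus the same quantity for $B_n$. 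Writing $X=q^{n/2}x$, $Y=q^{-1/2}y^n$, $Q=q^n$, the elementary substitution $q^{-n/2}x^{-1}=X^{-1}$ identifies this second series as $\kappa_2(X^{-1},Y,Q)$. Consequently the operator $\tfrac{\partial^2}{\partial x\partial y}(xy\,\cdot)|_{x=y=1}$ kills the part of $\kappa_2$ that is \emph{symmetric} under $X\mapsto X^{-1}$, and I may replace $\tfrac{\partial^2}{\partial x\partial y}(xy\,\kappa_2(X,Y,Q))$ by $\tfrac12\,\tfrac{\partial^2}{\partial x\partial y}\bigl(xy[\kappa_2(X,Y,Q)-\kappa_2(X^{-1},Y,Q)]\bigr)$.

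The second step feeds this antisymmetric combination into the Appell--theta identity \eqref{appell}, which rewrites $\kappa_2(X,Y,Q)-\kappa_2(X^{-1},Y,Q)$ as the closed ratio $-\,(Q;Q)_\infty^3\,\theta(X^2;Q)\big/\bigl(\theta(XY;Q)\,\theta(XY^{-1};Q)\bigr)$. After this substitution one has
$$A_n(\tau)=-\tfrac12\,q^{-\frac{1}{4n}+\frac{n}{4}}\,\frac{\partial^2}{\partial x\partial y}\left(xy\,\frac{(Q;Q)_\infty^3\,\theta(X^2;Q)}{\theta(XY;Q)\,\theta(XY^{-1};Q)}\right)\bigg|_{x=y=1},$$
so that all of the analytic content is now in place and only a rewriting of classical theta functions into the Jacobi-form notation $\vartheta_{n,a}$ remains.

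The third step performs that conversion. Using the definition of $\theta(\cdot;Q)$ from \eqref{sum-AL}'s neighborhood together with the quasi-periodicity $\theta(Qw;Q)=-Q^{-1}w^{-1}\theta(w;Q)$, and noting $XY=q^{(n-1)/2}xy^n$, $XY^{-1}=q^{(n+1)/2}xy^{-n}$, $X^2=q^nx^2$, a term-by-term comparison of the two theta series yields
$$\theta(XY;Q)=q^{-\frac{(2n-1)^2}{8n}}(xy^n)^{-\frac{2n-1}{2n}}\vartheta_{n,2n-1}(xy^n),\qquad \theta(XY^{-1};Q)=q^{-\frac{(2n+1)^2}{8n}}(xy^{-n})^{-\frac{2n+1}{2n}}\vartheta_{n,2n+1}(xy^{-n}),$$
while one application of quasi-periodicity gives $\theta(X^2;Q)=-\,q^{-9n/8}x^{-3}\vartheta_{n,n}(x^2)$ and, from $\eta(n\tau)=q^{n/24}(q^n;q^n)_\infty$, the constant factor $(Q;Q)_\infty^3=q^{-n/8}\eta(n\tau)^3$.

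The fourth step is the final bookkeeping, and this is the only place that needs genuine care. Assembling the ratio, the monomial prefactors accumulate to total $x$-degree $-1$ and $y$-degree $-1$, which cancel \emph{exactly} against the explicit factor $xy$; hence the entire $x,y$-dependence collapses to the clean expression $\eta(n\tau)^3\vartheta_{n,n}(x^2)\big/\bigl(\vartheta_{n,2n-1}(xy^n)\vartheta_{n,2n+1}(xy^{-n})\bigr)$. The residual $q$-power works out to $-\tfrac n4+\tfrac{1}{4n}$, which precisely cancels the prefactor $q^{-\frac{1}{4n}+\frac{n}{4}}$, and the sign from \eqref{appell} together with the sign from the quasi-periodicity of $\theta(X^2;Q)$ combines with the $-\tfrac12$ to give exactly $+\tfrac12$, producing the asserted formula. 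I expect the main obstacle to be entirely this accounting: confirming that the fractional powers $\tfrac{(2n\mp1)^2}{8n}$, the half-integer monomial exponents $\tfrac{2n\mp1}{2n}$, and the various signs all conspire so that the spurious prefactors vanish and the overall constant is precisely $\tfrac12$. There is no conceptual difficulty once the term-by-term theta matchings are pinned down, and the derivation is uniform in $n$, the even/odd distinction of Remark \ref{remark.indefinite} playing no role here since the Appell--Lerch route does not invoke the lattice description.
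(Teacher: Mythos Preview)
Your proposal is correct and follows exactly the route indicated in the paper: the text preceding the proposition already records the identity $A_n(\tau)=q^{-\frac{1}{4n}+\frac{n}{4}}\partial_x\partial_y(xy\,B_n)|_{x=y=1}$, the antisymmetry under $X\mapsto X^{-1}$ of the differentiated expression, and then simply says ``Using this relation, together with (\ref{appell}), and some manipulations with $q$-series'' one obtains the proposition. You have supplied precisely those manipulations, and your bookkeeping of the $q$-powers, monomial exponents, and signs is accurate.
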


%Notice that in the case of $c_1,c_2$. So in order to prove modularity it is sufficient to express ${\rm ch}[\mathcal{C}_n]$ as  $\Theta^{c_1, c_2}_{a, b}[f](\tau)$ such that $c_1,c_2 \in S_Q$, as in 
%Theorem 2.5 of Zwegers.

\begin{theorem} \label{explicit-char}
The series $A_n(\tau)$ is a modular form (on a certain congruence group) of weight $3$. Consequently,  ${\rm ch}[\mathcal{C}_n]$ is modular of weight zero.
\end{theorem}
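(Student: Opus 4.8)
The plan is to deduce modularity directly from the closed form in Proposition~\ref{explicit}, where the a priori indefinite (signature $(1,1)$) theta series of Remark~\ref{remark.indefinite} has already been resummed into a ratio of ordinary Jacobi theta functions. The essential point is the Appell--Lerch identity (\ref{appell}): although a generic signature $(1,1)$ theta function with sign insertions is only \emph{mock} modular (in the sense of Zwegers), the specific antisymmetric combination of $\kappa_2$ that produces $A_n$ collapses to the genuine meromorphic Jacobi form
$$\Psi(x,y,\tau)=\frac{\eta(n\tau)^3\,\vartheta_{n,n}(x^2)}{\vartheta_{n,2n-1}(xy^n)\,\vartheta_{n,2n+1}(xy^{-n})},$$
with no holomorphic anomaly to complete. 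Thus the whole argument reduces to tracking how a second elliptic derivative acts on $\Psi$ together with its transformation data.

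First I would record the weight and the automorphy of $\Psi$. Each $\vartheta_{n,a}$ is a component of a vector-valued Jacobi form of weight $\tfrac12$ and $\eta(n\tau)^3$ has weight $\tfrac32$; hence the numerator has weight $\tfrac32+\tfrac12=2$, the denominator weight $1$, and $\Psi$ is a meromorphic Jacobi form of weight $1$. Its index lattice is the rank-two lattice with Gram matrix $A=\left(\begin{smallmatrix}2n&n\\ n&0\end{smallmatrix}\right)$ from Remark~\ref{remark.indefinite}, and the $\eta$-multiplier together with the theta multipliers pins down a congruence subgroup $\Gamma$ (of $SL(2,\mathbb{Z})$ for $n$ odd, after the substitution $\tau\mapsto 2\tau$ for $n$ even). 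I would check that the elliptic transformation factors of the numerator and denominator cancel, so that $\Psi$ is genuinely invariant up to the weight-$1$ automorphy factor.

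Next comes the differentiation step. Writing $x=e(z_1)$, $y=e(z_2)$, each operator $x\partial_x=\tfrac{1}{2\pi i}\partial_{z_1}$ (resp. in $z_2$) raises the weight of a Jacobi form by $1$ upon restriction to the torsion point $z_1=z_2=0$, so $\tfrac12\,\partial_x\partial_y(\Psi)\big|_{x=y=1}$ should be modular of weight $1+2=3$. Two points need care: finiteness and the removal of the quasimodular ($E_2$) anomaly. Finiteness holds because $\vartheta_{n,n}(1;q)=0$ (pair $k\leftrightarrow -k-1$) while both denominator factors are nonzero at $x=y=1$ for $n\ge 2$, so $\Psi$ vanishes to first order there and the second derivative has a finite restriction. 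For genuine rather than quasimodular behaviour I would invoke the fact, visible from Remark~\ref{remark.indefinite}, that the extracted polynomial $p(\ell_1,\ell_2)=\ell_2(2\ell_1+\ell_2)$ is $A$-harmonic; a harmonic degree-$2$ insertion raises the weight by exactly $2$ without introducing an $E_2$ term, which is precisely what makes $\partial_x\partial_y\Psi\big|_{x=y=1}$ a holomorphic modular form of weight $3$ on $\Gamma$. The consequence for the character is then immediate: since $A_n=\eta(\tau)^6\,{\rm ch}[\mathcal{C}_n]$ and $\eta^6$ is modular of weight $3$, dividing gives that ${\rm ch}[\mathcal{C}_n]=A_n/\eta^6$ is modular of weight $0$.

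The main obstacle is this differentiation-and-specialization step, and specifically confirming that the $E_2$ anomaly genuinely cancels. Differentiating a meromorphic Jacobi form at a point where the numerator vanishes mixes the naive weight-raising with contributions from the Taylor expansion of the theta quotient, and one must check that exactly the harmonic combination survives at $x=y=1$. I expect this to follow by combining the heat-equation relations satisfied by the $\vartheta_{n,a}$ with the harmonicity of $p$, but making the index and multiplier bookkeeping precise enough to name the congruence group $\Gamma$ is where the real work lies.
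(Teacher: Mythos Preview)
Your proposal is correct and follows essentially the same route as the paper: invoke Proposition~\ref{explicit}, use the vanishing $\vartheta_{n,n}(1)=0$, differentiate, and count weights to get $3$, then divide by $\eta^6$. The only real difference is in how the potential $E_2$ anomaly is dispatched: the paper observes that, because the numerator $\vartheta_{n,n}(x^2)$ vanishes at $x=1$ and does not depend on $y$, the product rule for $\partial_x\partial_y$ leaves only terms involving \emph{first} derivatives of the individual theta factors specialized at $x=y=1$ (together with undifferentiated theta values), so the result is manifestly a rational expression in genuine modular forms of half-integral weight and no quasimodular piece can arise. Your appeal to the $A$-harmonicity of $p(\ell_1,\ell_2)=\ell_2(2\ell_1+\ell_2)$ is a valid alternative justification, but it is heavier machinery than needed here; the direct product-rule computation already shows that no second derivative of a single theta function---hence no heat-equation $E_2$ term---ever appears.
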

%(a) We have $${\rm ch}[\mathcal{C}_n]=q^{-c/24} \frac{(q^n;q^n)^3_\infty}{(q;q)_\infty^6}  \frac{\Theta^{3/2}_{n}(\tau)}{\prod_{m \geq 1}^\infty (1-q^{n(m-\frac12)+\frac12})^3 \prod_{m \geq 1}^\infty 
%(1-q^{n(m-\frac12)-\frac12})^3},$$
%where $\Theta^{3/2}_{n}(\tau)$ is a certain theta function of weight $\frac32$.
%\end{theorem}
\begin{proof} Using Proposition \ref{explicit} together with $\vartheta_{n,n}(1)=0$, after differentiation we obtain
that $A_n(\tau)$ can be expressed using specialized (at $x=y=1$) theta series and their first derivatives, so $A_n(\tau)$ is clearly modular. Since we started 
from a Jacobi form of weight one, after two differentiations and specializations we get a modular form of weight $3$. 
Since ${\rm ch}[\mathcal{C}_n]=\frac{1}{\eta(\tau)^6} A_n(\tau)$, the character has weight $0$.
%After we divide with $\eta(\tau)^6$.
%$$A_n(\tau)=\frac{\eta(n \tau)^3 \vartheta_{n,n}'(1)}{ (\vartheta_{n,n-1}(1)\vartheta_{n,n+1}(1))^3},$$
%which is clearly modular of weight zero.
\end{proof}
\begin{remark} (a) In \cite{AKMPP-selecta}, a closed formula
for $A_2(\tau)$ is obtained as a specialization of the Kac-Wakimoto denominator formula for $\widehat{osp(3|2)}$ \cite{KW-94} : 
$$A_2(\tau)=\frac{\eta(\tau)^{12}}{\eta(\tau/2)^6}.$$

(b) For $n=3$, in \cite{AK},  the character of $L_{-\frac{5}{3} } (G_2)$ was computed using the fact that it satisfies the MLDE
$$D_q^{(2)} f(q)- 75 E_4 f(q)=0.$$
This gives
$$A_3(\tau)=\frac{\eta(3 \tau)^6 E^{(3)}_1(\tau)}{\eta(\tau)^2}$$
where $E^{(3)}_1$ is an Eisenstein series of weight $1$.
Both proofs use different techniques compared to Theorem \ref{explicit-char}.

(c) Analysis of character-like solutions of MLDEs in Section 8, for low $p$, suggests that $\mathcal{C}_p$ admits a unique ordinary module.

\end{remark}

Based on $p=2,3,4$ and $5$, cases studied previously and Theorem \ref{explicit-char} we expect
 \begin{conj} Vertex operator algebra $\mathcal{C}_p$ is quasi-lisse for all $p$.
 \end{conj}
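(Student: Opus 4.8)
The plan is to prove quasi-lisseness of $\mathcal C_p$ by propagating the property through the two-step construction underlying Theorem~\ref{general-1}, namely
$$\mathcal U_p \;\cong\; \mathcal{D}^{ch}_{SL_2,-2+\frac1p} \;\xrightarrow{\ H_{DS,f}\ }\; \mathcal V_p \;\xrightarrow{\ \text{inverse QHR}\ }\; \mathcal C_p=\left(\mathcal V_p\otimes\Pi(0)^{\frac12}\right)^{\mathrm{int}_{\g_0}},$$
tracking the associated variety $X_{\mathcal C_p}={\rm Specm}(R_{\mathcal C_p})$, $R_{\mathcal C_p}=\mathcal C_p/C_2(\mathcal C_p)$, at each stage. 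The guiding principle is that quasi-lisseness is automatic whenever $X_V$ is a single smooth symplectic variety, that it is stable under tensor products, and that it is preserved under intersection with a Slodowy slice (the geometric shadow of QHR).

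First I would treat the base and the forward reduction. By Theorem~\ref{reg-11} we have $\mathcal U_p\cong\mathcal{D}^{ch}_{SL_2,-2+\frac1p}$, and since ${\rm gr}\,\mathcal{D}^{ch}_{SL_2,k}\cong\mathcal{O}(J_\infty(T^*SL_2))$ (as recalled in the proof of Theorem~\ref{reg-11}), the associated variety is $X_{\mathcal U_p}=T^*SL_2$, a smooth symplectic variety and hence a single symplectic leaf; thus $\mathcal U_p$ is quasi-lisse. Next, $\mathcal V_p=H_{DS,f}(\mathcal U_p)$ is the Drinfeld--Sokolov reduction along the principal nilpotent $f$ of the first $\mathfrak{sl}_2$-factor, so by Arakawa's description of associated varieties under QHR \cite{Arakawa} the variety $X_{\mathcal V_p}$ is the intersection of $X_{\mathcal U_p}$ with the corresponding Slodowy slice; since Slodowy slices are transverse to symplectic leaves, this intersection again carries finitely many leaves, and $\mathcal V_p$ is quasi-lisse.

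The crux is the final, inverse-QHR step, where $\mathcal C_p=\mathrm{Ker}_{\mathcal V_p\otimes\Pi(0)^{\frac12}}\,S$ is the kernel of the screening operator, equivalently the maximal $\g_0$-integrable part for the homomorphism (\ref{action-11}) of Theorem~\ref{general-1}. The associated variety of $\Pi(0)^{\frac12}$ is governed by the free field $d$ and the invertible exponential $e^{c/2}$, so it is a finite union of symplectic leaves, and tensoring preserves this finiteness. The inverse reduction reconstructs, via the $\g_0$-moment map implicit in (\ref{action-11}), precisely the $\g_0$-saturation whose transverse Slodowy slice returns $X_{\mathcal V_p}$; heuristically, since the forward QHR slices transversally to the leaves and the inverse QHR un-slices along the same directions, the leaf count should be preserved, yielding quasi-lisseness of $\mathcal C_p$.

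The main obstacle is that there is at present no general theorem asserting that inverse QHR — the combined operation of tensoring with $\Pi(0)^{\frac12}$ and passing to $\mathrm{Ker}\,S$ — preserves quasi-lisseness, in contrast to the well-developed theory for ordinary QHR; making the previous paragraph rigorous requires controlling $X_{\mathrm{Ker}\,S}$ inside $X_{\mathcal V_p}\times X_{\Pi(0)^{\frac12}}$ under the moment-map constraints. As an alternative route I would compute $X_{\mathcal C_p}$ directly: by the strong-generation Corollary following Theorem~\ref{general-1}, for $p>2$ the Poisson algebra $R_{\mathcal C_p}$ is generated by $\g_0$ together with the $2p+2$ weight-$\frac{p-1}{2}$ fields spanning $V_{\mathfrak{sl}_2}(p\omega_1)\otimes V_{\mathfrak{sl}_2}(\omega_1)$, so that $X_{\mathcal C_p}$ embeds into $\g_0^*\oplus\left(V_{\mathfrak{sl}_2}(p\omega_1)\otimes V_{\mathfrak{sl}_2}(\omega_1)\right)^*$; one would then read off the defining Poisson relations from the singular vectors and the fusion rule (\ref{fusion-w-p}) and show the resulting variety is a finite union of $G_0$-orbits. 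Either approach can be validated against the solved cases, where $X_{\mathcal C_2}=\mathbb{C}^6=T^*\mathbb{C}^3$, $X_{\mathcal C_3}=\ov{\mathcal O}_{\min}$ (the minimal nilpotent orbit closure of type $G_2$), and for $p=4,5$ the Slodowy-slice intersections inside the admissible varieties of $F_4$ and $E_8$ — all finite-leaf, consistent with the conjecture.
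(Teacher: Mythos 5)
The statement you are trying to prove is stated in the paper as a \emph{conjecture}: the authors do not prove quasi-lisseness of $\mathcal{C}_p$ for all $p$. They establish it only for $p=2,3,4,5$, in each case by identifying $\mathcal{C}_p$ with a vertex algebra already known to be quasi-lisse ($M_{(3)}$, the admissible affine algebra $L_{-5/3}(G_2)$ via \cite{AK}, and the $W$-algebras $\mathcal{W}_{-23/4}(F_4,A_1+\tilde A_1)$ and $\mathcal{W}_{-119/5}(E_8,A_4+A_2)$ obtained by QHR from admissible levels, using \cite{AvEM} and Proposition 6.3 of \cite{AK}), and for general $p$ they only prove modularity of ${\rm ch}[\mathcal{C}_p]$ (Theorem \ref{explicit-char}) as supporting evidence. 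So a complete proof for all $p$ would be new mathematics, and your proposal does not supply it: as you yourself concede, the entire argument hinges on the unproven principle that inverse QHR --- tensoring with $\Pi(0)^{1/2}$ and passing to $\mathrm{Ker}\,S$, equivalently to the maximal $\g_0$-integrable part --- preserves quasi-lisseness. Naming the obstacle is not the same as overcoming it, and the heuristic ``un-slicing along the same directions preserves the leaf count'' has no known rigorous counterpart.

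Moreover, one of your intermediate claims is wrong in a way that shows why no soft argument can work here. You assert that $X_{\Pi(0)^{1/2}}$ is a finite union of symplectic leaves and that ``tensoring preserves this finiteness.'' But the Poisson bracket on $R_V=V/C_2(V)$ is induced by $a_{(0)}b$, and for the Heisenberg generators one has $[c_\lambda d]=2\lambda$, whose $\lambda^0$-coefficient vanishes; the Heisenberg subalgebra $M_{c,d}(1)$ therefore contributes directions on which the induced bracket degenerates, producing continuous families of leaves --- the half-lattice algebra $\Pi(0)^{1/2}$ is \emph{not} quasi-lisse (consistent with its continuum of irreducible modules). Hence $X_{\mathcal{V}_p\otimes\Pi(0)^{1/2}}$ already has infinitely many leaves, and all finiteness must come from the passage to $\mathrm{Ker}\,S$; but taking $C_2$-quotients does not commute with taking kernels of screenings, and you establish no control of $X_{\mathrm{Ker}\,S}$ whatsoever. (This is visible even in the solved case $p=3$: $L_{-5/3}(G_2)$ is quasi-lisse while $M_{(2)}\otimes\Pi(0)^{1/2}$ is not, so the kernel genuinely cuts the variety down, by a mechanism your leaf-counting heuristic cannot see.) Your alternative route --- bounding $X_{\mathcal{C}_p}$ inside $\g_0^*\oplus\bigl(V_{\mathfrak{sl}_2}(p\omega_1)\otimes V_{\mathfrak{sl}_2}(\omega_1)\bigr)^*$ via the strong generation corollary and then exhibiting the Poisson ideal --- is the plausible direction (and the embedding itself is fine), but as written it is a program, not a proof: you never produce the relations in $R_{\mathcal{C}_p}$ nor show the zero locus is a finite union of orbits, which is precisely the open content of the conjecture.
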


\section{Future work}

There are several directions that we we would like to pursue.

\begin{enumerate}

\item
    We plan to generalize some results  of this paper by extending them to higher-rank simple Lie algebras at levels $-h^\vee+\frac{1}{p}$ and $-h^\vee-p$, starting with $\frak{sl}_3$. Although some results on the structure of such representations at negative levels are missing or they are subject to values of Kazhdan-Lusztig polynomials, we believe that higher rank simple vertex algebras $\mathcal{C}_{p,\frak{g}}$ generalizing $\frak{g}=\frak{sl}_2$, can be rigorously constructed using methods of this paper for every simple $\frak{g}$. If $\frak{g}$ is of ADE type, we expect that we have a simple
    vertex algebra structure on
 $$\mathcal{C}_{p,\frak{g}}= \bigoplus_{\lambda \in P^+} L_{-p-h^\vee, p \lambda}(\frak{g}) \otimes L_{-h^\vee+\frac{1}{p},\lambda^*}(\frak{g}).$$
 Again, unlike $\mathcal{D}^{ch}_{G,-h^\vee+\frac{1}{p}}$, this space is $\frac12 \mathbb{Z}_{\geq 0}$-graded with finite-dimensional graded subspaces.       
 For $p=2$ and $\frak{g}=\frak{sl}_3$, we expect $\mathcal{C}_{2,\frak{sl}_3} \cong L_{-\frac52}(F_4)$, from the Deligne series \cite{AK}.

\item  As hinted in the introduction, studying non-ordinary modules for $\mathcal{C}_p$ would be interesting, as it provides a direct connection to minimal representations. Another direction for exploration is the construction of logarithmic modules, as in \cite{ACGY}.

%As we already hinted in the introduction studying non-ordinary modular for $\mathcal{C}_p$ would be interesting as this gives a direct connection with minimal representations 
%as for instance for $p=3$. Another direction is to construct logarithmic modules as \cite{ACGY}.

   \item  The modularity of characters discussed in the final section suggests new families of indefinite-type modular theta functions with harmonic coefficients arising from the ``numerator'' of ${\rm ch}[\mathcal{C}_{p,\mathfrak{g}}]$ with harmonic coefficients (see Remark \ref{remark.indefinite}). Related higher rank $q$-series with polynomial coefficients have been previously studied by S. Zwegers and others \cite{RZ,Zwegers}. 

\end{enumerate}

\vskip 5mm

 \paragraph{\textbf{Acknowledgements}}  

 D.A. is partially supported by the Croatian Science Foundation under the project IP-2022-10-9006.
%A.M. received partial support from the Collaboration Grant for Mathematicians 709563 awarded by the Simons Foundation and the NSF grant DMS-2101844. 
We would like to thank Igor Frenkel for bringing \cite{FS} to our attention a while ago, and Sander Zwegers for discussion regarding indefinite-theta functions.

\vskip3pt
% \footnotesize{
%  \noindent{\bf D.A.}:  Department of Mathematics, Faculty of Science, University of Zagreb, Bijeni\v{c}ka 30, 10 000 Zagreb, Croatia;\newline
%{\tt adamovic@math.hr}

%}

\end{document}